
\documentclass[reqno]{amsproc}
\usepackage{amssymb,url}
 \usepackage[backref]{hyperref}
\makeatletter
 \def\activeat#1{\csname @#1\endcsname}
 \def\def@#1{\expandafter\def\csname @#1\endcsname}
 {\catcode`\@=\active \gdef@{\activeat}}

\let\ssize\scriptstyle
\newdimen\ex@	\ex@.2326ex

 \def\requalfill{\cleaders\hbox{$\mkern-2mu\mathord=\mkern-2mu$}\hfill
  \mkern-6mu\mathord=$}
 \def\eqfill{$\m@th\mathord=\mkern-6mu\requalfill}
 \def\deffill{\hbox{$:=$}$\m@th\mkern-6mu\requalfill}
 \def\fiberbox{\hbox{$\vcenter{\hrule\hbox{\vrule\kern1ex
     \vbox{\kern1.2ex}\vrule}\hrule}$}}

 \newdimen\arrwd 
  \newdimen\minCDarrwd \minCDarrwd=2.5pc
   \setbox\z@\hbox{$\rightarrow\,$} \minCDarrwd=\wd\z@
 	
 \def\findarrwd#1#2#3{\arrwd=#3%
  \setbox\z@\hbox{$\ssize\;{#1}\;\;$}%
  \setbox\@ne\hbox{$\ssize\;{#2}\;\;$}%
  \ifdim\wd\z@>\arrwd \arrwd=\wd\z@\fi
  \ifdim\wd\@ne>\arrwd \arrwd=\wd\@ne\fi}
 \newdimen\arrowsp\arrowsp=0.375em  	
 \def\findCDarrwd#1#2{\findarrwd{#1}{#2}{\minCDarrwd}
    \advance\arrwd by 2\arrowsp}
 \newdimen\minarrwd 
 \setbox\z@\hbox{$\longrightarrow$} \minarrwd=\wd\z@

 \def\harrow#1#2#3#4{{\minarrwd=#1\minarrwd
   \findarrwd{#2}{#3}{\minarrwd}\kern\arrowsp
    \mathrel{\mathop{\hbox to\arrwd{#4}}\limits^{#2}_{#3}}\kern\arrowsp}}

 \def@]#1>#2>#3>{\harrow{#1}{#2}{#3}\rightarrowfill}
 \def@>#1>#2>{\harrow1{#1}{#2}\rightarrowfill}
 \def@<#1<#2<{\harrow1{#1}{#2}\leftarrowfill}
 \def@={\harrow1{}{}\eqfill}
 \def@:#1={\harrow1{}{}\deffill}
 \def@ N#1N#2N{\vCDarrow{#1}{#2}\UpDownarrow}
 \def\UpDownarrow{\uparrow\,\Big\downarrow}

\def\hookrightarrowfill{\hbox{$\lhook\joinrel$}\rightarrowfill}
\def@{hk>}#1>#2>{\harrow1{#1}{#2}\hookrightarrowfill}
\def@ c>#1>#2>{\harrow1{#1}{#2}\hookrightarrowfill}
\def\hookleftarrowfill{\leftarrowfill\hbox{$\joinrel\rhook$}}
\def@{hk<}#1<#2<{\harrow1{#1}{#2}\hookleftarrowfill}

 \def@.{\ifodd\row\relax\harrow1{}{}\hfill
   \else\vCDarrow{}{}.\fi}
 \def@|{\vCDarrow{}{}\Vert}
 \def@ V#1V#2V{\vCDarrow{#1}{#2}\downarrow}
 \def@ A#1A#2A{\vCDarrow{#1}{#2}\uparrow}
 \def@(#1){\arrwd=\csname col\the\col\endcsname\relax
   \hbox to 0pt{\hbox to \arrwd{\hss$\vcenter{\hbox{$#1$}}$\hss}\hss}}

 \def\squash#1{\setbox\z@=\hbox{$#1$}\finsm@@sh}
\def\finsm@@sh{\ifnum\row>1\ht\z@\z@\fi \dp\z@\z@ \box\z@}

 \newcount\row \newcount\col \newcount\numcol \newcount\arrspan
 \newdimen\vrtxhalfwd  \newbox\tempbox

 \def\innernewdimen{\alloc@1\dimen\dimendef\insc@unt}
 \def\measureinit{\col=1\vrtxhalfwd=0pt\arrspan=1\arrwd=0pt 
   \setbox\tempbox=\hbox\bgroup$}
 \def\setinit{\col=1\hbox\bgroup$\ifodd\row
   \kern\csname col1\endcsname
   \kern-\csname row\the\row col1\endcsname\fi}
 \def\findvrtxhalfsum{$\egroup
  \expandafter\innernewdimen\csname row\the\row col\the\col\endcsname
  \global\csname row\the\row col\the\col\endcsname=\vrtxhalfwd
  \vrtxhalfwd=0.5\wd\tempbox
  \global\advance\csname row\the\row col\the\col\endcsname by \vrtxhalfwd 
  \advance\arrwd by \csname row\the\row col\the\col\endcsname
  \divide\arrwd by \arrspan
  \loop\ifnum\col>\numcol \numcol=\col%
     \expandafter\innernewdimen \csname col\the\col\endcsname
     \global\csname col\the\col\endcsname=\arrwd
   \else \ifdim\arrwd >\csname col\the\col\endcsname
      \global\csname col\the\col\endcsname=\arrwd\fi\fi
   \advance\arrspan by -1 %
   \ifnum\arrspan>0 \repeat}
 \def\setCDarrow#1#2#3#4{\advance\col by 1 \arrspan=#1 
    \arrwd= -\csname row\the\row col\the\col\endcsname\relax
    \loop\advance\arrwd by \csname col\the\col\endcsname
     \ifnum\arrspan>1 \advance\col by 1 \advance\arrspan by -1%
     \repeat
    \squash{\mathop{
     \hbox to\arrwd{\kern\arrowsp#4\kern\arrowsp}}\limits^{#2}_{#3}}}
 \def\measureCDarrow#1#2#3#4{\findvrtxhalfsum\advance\col by 1%
   \arrspan=#1\findCDarrwd{#2}{#3}%
    \setbox\tempbox=\hbox\bgroup$}
 \def\vCDarrow#1#2#3{\kern\csname col\the\col\endcsname
    \hbox to 0pt{\hss$\vcenter{\llap{$\ssize#1$}}%
     \Big#3\vcenter{\rlap{$\ssize#2$}}$\hss}\advance\col by 1}

 \def\setCD{\def\harrow{\setCDarrow}%
  \def\\{$\egroup\advance\row by 1\setinit}
  \m@th\lineskip3\ex@\lineskiplimit3\ex@ \row=1\setinit}
 \def\endsetCD{$\egroup}
 \def\dr@p#1\\{\findvrtxhalfsum\advance\row by 2 \measureinit}
 \def\measure{\bgroup
  \def\harrow{\measureCDarrow}%
  \def\\##1{\ifx##1\endmeasure\endmeasure\else\expandafter\dr@p\fi}%
  \row=1\numcol=0\measureinit}
 \def\endmeasure{\findvrtxhalfsum\egroup}

 \newcount\savedcount	
 \def\LCD#1\end{\savedcount=\count11
   \measure#1\endmeasure
   \vcenter{\setCD#1\endsetCD\kern\medskipamount}%
   \global\count11=\savedcount\end}

 \newenvironment{CD}{\let\at=@\catcode`\@=\active\LCD}{\catcode`\@=12\relax}
\makeatother

 \usepackage{fancyhdr} \pagestyle{fancy}

\fancyhead{}
\fancyhead[RO, LE]{\small\thepage}
\setlength\headheight{18.83331pt}
\def\thetitle{Enriques diagrams, arbitrarily near points,\\
 and Hilbert schemes}
\fancyhead[RE]{\scshape\small S. Kleiman and R. Piene}
\fancyhead[LO]{\small\scshape Enriques diagrams, arbitrarily near points,
 and Hilbert schemes}
\fancyfoot{}
\fancyfoot[C]{\texttt {\jobname.tex: \today}}

\widowpenalty=0 \clubpenalty=0
\parskip=0pt plus 0.625pt

\newcommand{\emdash}{\unskip\penalty10000\thinspace
        ---\penalty-500\thinspace\ignorespaces}

\def\Wt(#1){m_{#1}} \let\wh=\widehat
\let\wt=\widetilde \let\td=\tilde
\def\UThin{\penalty\@M \thinspace\ignorespaces}
\def\(#1){{\let~=\UThin\rm(#1)}}
\def\tsum{\textstyle\sum}

 \DeclareMathOperator{\Aut}{Aut}

\DeclareMathOperator{\Hilb}{Hilb}

 \DeclareMathOperator{\Hom}{Hom}
\DeclareMathOperator{\Ker}{Ker}

\DeclareMathOperator{\Proj}{Proj}

\DeclareMathOperator{\Spec}{Spec}
\DeclareMathOperator{\Sym}{Sym}

\DeclareMathOperator{\cod}{cod}

\DeclareMathOperator{\type}{type}

\let\?=\overline
\let\:=\colon
\let\bb=\mathbb
\let\bu=\bullet
\let\I=\mathbf
\let\into=\hookrightarrow
\let\mc=\mathcal
\let\onto=\twoheadrightarrow
\let\ox=\otimes
\let\tu=\textup
\let\vf=\varphi
\let\x=\times
\let\xto=\xrightarrow


\def\risom{\buildrel\sim\over{\smashedlongrightarrow}}
 \def\smashedlongrightarrow{\setbox0=\hbox{$\longrightarrow$}\ht0=1.25pt\box0}
\newcommand{\uH}{\tu H}
\newcommand{\uZ}{\tu Z}
\newcommand{\uB}{\tu B}
\newcommand{\AAA}{{\mathbb A}}
\newcommand{\CI}{{\mathcal I}}
\newcommand{\CJ}{{\mathcal J}}

\newcommand{\CO}{{\mathcal O}}

\theoremstyle{plain}
 \newtheorem{thm}{Theorem}[section]
 \newtheorem{cor}[thm]{Corollary}
 \newtheorem{gss}[thm]{Guess}
 \newtheorem{lem}[thm]{Lemma}
 \newtheorem{prp}[thm]{Proposition}

\theoremstyle{definition}
 \newtheorem{dfn}[thm]{Definition}
 \newtheorem{eg}[thm]{Example}

  \newtheorem{rmk}[thm]{Remark}
 \newtheorem{sbs}[thm]{}

\makeatletter
 \@addtoreset{equation}{thm}
\makeatother

\def\mylistparam
 {\setbox0=\hbox{\rm(1)\kern0.5em}
  \setlength{\labelwidth}{\wd0}
  \setlength{\topsep}{\smallskipamount}%
  \setlength{\itemsep}{0pt}\setlength{\parsep}{0pt}%
  \setlength{\itemindent}{0pt}%
  \setlength{\leftmargin}{\parindent}%
  \addtolength{\leftmargin}{\wd0}%
 }%
\renewenvironment{enumerate}%
 {\begin{list}{\rm(\arabic{enumi})}%
  {\usecounter{enumi}\mylistparam}%
  }%
 {\end{list}}



\begin{document}

\title\thetitle

\author[S. Kleiman]{Steven KLEIMAN}
 \address
 {Department of Mathematics, MIT \\
 77 Massachusetts Avenue\\
 Cambridge, MA 02139, USA}
 \email{kleiman@math.mit.edu}

\author[R. Piene]{Ragni PIENE}
\address{CMA and Department of Mathematics\\
University of Oslo\\ 
PO Box 1053, Blindern\\
NO-0316 Oslo, Norway}
\email{ragnip@math.uio.no}

\author{\\with  Appendix B by Ilya TYOMKIN}
 \address
 {Department of Mathematics\\
 Ben Gurion University of the Negev\\
 P.O.B. 653, Be'er Sheva 84105, Israel}
 \email{tyomkin@cs.bgu.ac.il}

\date{\today}

\subjclass[2000]{Primary 14N10; Secondary 14C20, 14H40, 14K05}

\keywords{Enriques diagrams, arbitrarily near points, Hilbert schemes,
  property of exchange.}

\begin{abstract}

Given a smooth family $F/Y$ of geometrically irreducible surfaces, we
study sequences of {\it arbitrarily near} $T$-points of $F/Y$; they
generalize the traditional sequences of infinitely near points of a
single smooth surface.  We distinguish a special sort of these new
sequences, the {\it strict} sequences.  To each strict sequence, we
associate an ordered unweighted Enriques diagram.  We prove that the
various sequences with a fixed diagram form a functor, and we represent
it by a smooth $Y$-scheme.

We equip this $Y$-scheme with a free action of the automorphism group of
the diagram.  We equip the diagram with weights, take the subgroup of
those automorphisms preserving the weights, and form the corresponding
quotient scheme.  Our main theorem constructs a canonical universally
injective map from this quotient scheme to the Hilbert scheme of $F/Y$;
further, this map is an embedding in characteristic 0.  However, in
every positive characteristic, we give an example,  in Appendix
B, where the map is
purely inseparable.
 \end{abstract}

\maketitle

\section{Introduction}\label{sc:intro}
In the authors' paper~\cite{KP99}, Proposition (3.6) on p.\,225 concerns
the locus $H({\bf D})$ that sits in the Hilbert scheme of a smooth
irreducible complex surface and that parameterizes the complete ideals
$\mc I$ with a given minimal Enriques diagram ${\bf D}$.  The latter is
an abstract combinatorial structure associated to a sequence of
arbitrarily near points rendering $\mc I$ invertible.  The proposition
asserts that $H({\bf D})$ is smooth and equidimensional.

The proposition was justified intuitively, then given an ad hoc proof in
\cite{KP99}.  The intuitive justification was not developed into a
formal proof, as this proof is surprisingly long and complicated.
However, the proof yields more: it shows  $H({\bf D})$ is
irreducible; it works for nonminimal ${\bf D}$; and it works for
\emph{families} of surfaces.  Further, it works to a great extent when
the characteristic is positive or mixed, but then it only shows 
$H({\bf D})$ has a finite and universally injective covering by a smooth
cover; this covering need not be birational, as  examples in Appendix
B show.

  Originally, the authors planned to develop that formal proof in a paper
  that also dealt with other loose ends, notably, the details of the
  enumeration of curves with eight nodes.  However, there is so much
  material involved that it makes more sense to divide it up.  Thus the
  formal proof alone is developed in the present paper; the result
  itself is asserted in Corollary~\ref{corHsbs}.  Here, in more
  detail, is a description of this paper's contents.

Fix a smooth family of geometrically irreducible surfaces $F/Y$ and an
integer $n\ge0$.  Given a $Y$-scheme $T$, by a {\it sequence of
  arbitrarily near $T$-points of\/} $F/Y$, we mean an $(n+1)$-tuple
$(t_0,\dots,t_n)$ where $t_0$ is a $T$-point of $F^{(0)}_T:=F\x_YT$ and
where $t_i$, for $i\ge1$, is a $T$-point of the blowup $F^{(i)}_T$ of
$F^{(i-1)}_T$ at $t_{i-1}$.  (If each $t_i$ is, in fact, a $T$-point of
the exceptional divisor $E^{(i)}_T$ of $F^{(i)}_T$, then
$(t_0,\dots,t_n)$ is a sequence of infinitely near points in the
traditional sense.)  The sequences of arbitrarily near $T$-points form a
functor in $T$, and it is representable by a smooth $Y$-scheme
$F^{(n)}$, according to Proposition~\ref{prp-Har} below; this result is
due, in essence, to Harbourne \cite[Prp.~I.2, p.~104]{Hb86}.

 We say that the sequence $(t_0,\dots,t_n)$ is {\it strict} if, for each
$i,j$ with $1\le j\le i$, the image $T^{(i)}\subset F^{(i)}_T$ of $t_i$
is either (a) disjoint from, or (b) contained in, the strict transform
of the exceptional divisor $E^{(j)}_T$ of $F^{(j)}_T$.  If (b) obtains,
then we say that $t_i$ is {\it proximate\/} to $t_j$ and we write
$t_i\succ t_j$.

To each strict sequence, we associate, in Section~3, an unweighted
Enriques diagram $\I U$ and an ordering $\theta\:\I U\risom \{0, \dotsc,
n\}$.  Effectively, $\I U$ is just a graph whose vertices are the $t_i$.
There is a directed edge from $t_j$ to $t_i$ provided that $j+1\le i$
and that the map from $F^{(i)}_T$ to $F^{(j+1)}_T$ is an isomorphism in a
neighborhood of $T^{(i)}$ and embeds $T^{(i)}$ in $E^{(j+1)}_T$.  In
addition, $\I U$ inherits the binary relation of proximity.  Finally,
$\theta$ is defined by $\theta(t_i):=i$.  This material is discussed in
more detail in Section~2.  In particular, to aid in passing from
$(t_0,\dots,t_n)$ to $(\I U,\,\theta)$, we develop a new combinatorial
notion, which we call a \textit{proximity structure}.

  Different strict sequences often give rise to isomorphic pairs $(\I
U,\,\theta)$.  If we fix a pair, then the corresponding sequences form a
functor, and it is representable by a subscheme $F(\I U,\,\theta)$ of
$F^{(n)}$, which is $Y$-smooth with irreducible geometric fibers of a
certain dimension.  This statement is asserted by Theorem~\ref{thm:3-2},
which was inspired by Ro\'e's Proposition 2.6 in \cite{Roe01}.

Given another ordering $\theta'$, in Section~4 we construct a natural
isomorphism
$$\Phi_{\theta,\theta'}\:F(\I U,\,\theta)\risom F(\I
 U,\,\theta').$$
It is easy to describe $\Phi_{\theta,\theta'}$ on geometric points.  A
geometric point of $F(\I U,\,\theta)$ corresponds to a certain sequence
of local rings in the function field of the appropriate geometric
fiber of $F/Y$.  Then $\theta'\circ\theta^{-1}$ yields a suitable
permutation of these local rings, and so a geometric point of $F(\I
U,\,\theta')$.  However, it is  harder to work with arbitrary $T$-points.
Most of the work is carried out in the proofs of Lemmas~\ref{lemL2}
and~\ref{lemAut}, and the work is completed in the proof of
Proposition~\ref{prpIso}.

We easily derive two corollaries.  Corollary~\ref{corAut} asserts that
$\Aut(\I U)$ acts freely on $F(\I U,\,\theta)$; namely, $\gamma\in
\Aut(\I U)$ acts as $\Phi_{\theta,\theta'}$ where
$\theta':=\theta\circ\gamma$.  Corollary~\ref{corQt} asserts that
$\Psi\:F(\I U,\,\theta)\big/{\Aut} (\I U)$ is $Y$-smooth with
irreducible geometric fibers.

A different treatment of $F(\I U,\,\theta)$ is given by A.-K. Liu in
\cite{Liu00}.  In Section~3 on pp.\,400--401, he constructs $F^{(n)}$.
In Subsection~4.3.1 on pp.\,412--414, he discusses his version of an
Enriques diagram, which he calls an ``admissible graph.''  In
Subsections~4.3.2, 4.4.1, and 4.4.2 on pp.\,414--427, he constructs
$F(\I U,\,\theta)$, and proves it is smooth.  In Subsection~4.5 on
pp.\,428--433, he constructs the action of $\Aut(\I U)$ on $F(\I
U,\,\theta)$.  Of course, he uses different notation; also, he doesn't
represent functors.  But, like the present authors, he was greatly
inspired by Vainsencher's approach in \cite{V95} to enumerating the
singular curves in a linear system on a smooth surface.

Our main result is Theorem \ref{thmF2H}.  It concerns the Enriques
diagram $\I D$ obtained by equipping the vertices $V\in \I U$ with
weights $m_V$ satisfying the {\it Proximity Inequality,}
$m_V\ge\tsum_{W\succ V}m_W.$ We discuss the theory of such $\I D$ in
Section~2.  Note that $\Aut(\I D)\subset\Aut(\I U)$.  Set
$d:=\tsum_{V}\binom{\Wt(V)+1}{2}$.  Theorem~\ref{thmF2H} asserts the
existence of a universally injective map from the quotient to the
Hilbert scheme
$$\Psi\:F(\I U,\,\theta)\big/{\Aut} (\I D)\to\Hilb_{F/Y}^d.$$

Proposition~\ref{prpF2H} implies that $\Psi$ factors into a finite map
followed by an open embedding.  So $\Psi$ is an embedding in
characteristic 0.  However, in any positive characteristic, $\Psi$ can
be ramified everywhere; examples are given in Appendix~B, whose content
is due to Tyomkin.  Nevertheless, according to Proposition~\ref{prpM},
in the important case where every vertex of $\I D$ is a root, $\Psi$ is
an embedding in any characteristic.  Further, adding a nonroot does not
necessarily mean there is a characteristic in which $\Psi$ ramifies, as
other examples in Appendix~B show.

We construct $\Psi$ via a relative version of the standard construction
of the complete ideals on a smooth surface over a field, which grew out
of Zariski's work in 1938; the standard theory is reviewed in
Subsection~5.1.  Now, a $T$-point of $F(\I U,\,\theta)$ represents a
sequence of blowing-ups $F^{(i)}_T\to F^{(i-1)}_T$ for $1\le i\le n+1$.
On the final blowup $F^{(n+1)}_T$, for each $i$, we form the preimage of
the $i$th center $T^{(i)}$.  This preimage is a divisor; we multiply it
by $m_{\theta^{-1}(i)}$, and we sum over $i$.  We get an effective
divisor.  We take its ideal, and push down to $F_T$.  The result is an
ideal, and it defines the desired $T$-flat subscheme of $F_T$.  The
flatness holds and the formation of the subscheme commutes with base
change owing to the generalized property of exchange proved in
Appendix~A.  Appendix~A is of independent interest.

It is not hard to see that $\Psi$ is injective on geometric points, and
that its image is the subset $H(\I D)\subset \Hilb^d_{F/Y}$
parameterizing complete ideals with diagram $\I D$ on the fibers of
$F/Y$.  To prove that $\Psi$ induces a finite map onto $H(\I D)$, we use
a sort of valuative criterion; the work appears in Lemma~\ref{lemDVR1}
and Proposition~\ref{prpF2H}.  An immediate corollary,
Corollary~\ref{corHLC}, asserts that $H(\I D)$ is locally closed.  This
result was proved for complex analytic varieties by
Lossen~\cite[Prp.~2.19, p.~35]{Los98} and for excellent schemes by
Nobile and Villamayor~\cite[Thm.~2.6, p.~250]{NV97}.  Their proofs are
rather different from each other and from ours.

In \cite{R03} and \cite{R04}, Russell studies sets somewhat similar to
the $H(\I D)$.  They parameterize isomorphism classes of finite
subschemes of $F$ 
supported at one point.

In short, Section~2 treats weighted and unweighted Enriques diagrams and
proximity structures.  Section~3 treats sequences of arbitrarily near
$T$-points.  To certain ones, the \textit{strict} sequences, we
associate an unweighted Enriques diagram $\I U$ and an ordering
$\theta$.  Fixing $\I U$ and $\theta$, we obtain a functor, which we
represent by a smooth $Y$-scheme $F(\I U,\,\theta)$.  Section~4 treats
the variance in $\theta$.  We produce a free action on $F(\I
U,\,\theta)$ of $\Aut(\I U)$.  Section~5 treats the Enriques diagram $\I
D$ obtained by equipping $\I U$ with suitable weights.  We construct a
map $\Psi$ from $F(\I U,\,\theta)\big/{\Aut} (\I D)$ to $\Hilb_{F/Y}$,
whose image is the locus $H(\I D)$ of complete ideals.  We prove $H(\I
D)$ is locally closed.  Our main theorem asserts that $\Psi$ is
universally injective, and in fact, in characteristic 0, an embedding.
Appendix~A treats the generalized property of exchange used in
constructing $\Psi$.  Finally, Tyomkin's Appendix~B treats a few
examples: in some, $\Psi$ is ramified; in others, there's a nonroot, yet
$\Psi$ is unramified.

\section{Enriques diagrams}\label{sc:diag} 
In 1915, Enriques \cite[IV.I, pp.~350--51]{EC15} explained a way to
represent the equisingularity type of a plane curve singularity by means
of a directed graph: each vertex represents an arbitrarily near point,
and each edge connects a vertex representing a point to a vertex
representing a point in its first-order neighborhood; furthermore, the
graph is equipped with a binary relation representing the ``proximity'' of
arbitrarily near points.  These graphs have, for a long time, been called
{\it Enriques diagrams}, and in 2000, they were given a modern treatment
by Casas in \cite[Sec.~3.9, pp.~98--102]{Ca00}.

Based in part on a preliminary edition of Casas' monograph, a more
axiomatic treatment was given by the authors in \cite[\S~2]{K--P}, and
this treatment is elaborated on here in Subsection~\ref{sb:basics}.  In
this treatment, the vertices are weighted, and the number of vertices is
minimized.  When the diagram arises from a curve, the vertices correspond
to the ``essential points'' as defined by Greuel et
al. \cite[Sec.~2.2]{GLS98}, and the weights are the multiplicities of
the points on the strict transforms.   Casas' treatment is similar: the
Proximity Inequality is always an equality, and the leaves, or extremal
vertices, are of weight 1; so the rest of the weights are determined.

At times, it is convenient to work with unweighted diagrams.  For this
reason, Ro\'e \cite[\S1]{Roe01}, inspired by Casas, defined an
``Enriques diagram'' to be an unweighted graph, and he imposed five
conditions, which are equivalent to our Laws of Proximity and of
Succession.  Yet another description of unweighted Enriques diagrams is
developed below in Subsection~\ref{sbs-ps} and Proposition~\ref{prp-ps}
under the name of ``proximity structure.''  This description facilitates
the formal assignment, in Subsection~\ref{sb:curve}, of an Enriques
diagram to a plane curve singularity.  Similarly, the description
facilitates the assignment in Section~\ref{sc:Inp} of the Enriques
diagram associated to a strict sequence of arbitrarily near points.

At times, it is convenient to order the elements of the set underlying
an Enriques diagram or underlying a proximity structure.  This subject
is developed in Subsections \ref{sb-Ud} and \ref{sbs-ps} and in
Corollary~\ref{cor-ps}.  It plays a key role in the later sections.

Finally, in Subsection~\ref{sb:nchar}, we discuss several useful
numerical characters.  Three were introduced in \cite[Sct.~2,
p.~214]{KP99}, and are recalled here.  Proposition~\ref{pr:2-7}
describes the change in one of the three when a singularity is
blown up; this result is needed in \cite{KPnpc}

\begin{sbs}[Enriques diagrams] \label{sb:basics}
First, recall some general notions.  In a directed graph, a vertex $V$
is considered to be one of its own predecessors and one of its own
successors.  Its other predecessors and successors $W$ are said to be
{\it proper}.  If there are no loops, then $W$ is said to be {\it
remote\/}, or {\it distant}, if there is a distinct third vertex lying
between $V$ and $W$; otherwise, then $W$ is said to be {\it immediate}.

A {\it tree\/} is a directed graph with no loops; by definition, it has
a single initial vertex, or {\it root,} and every other vertex has a
unique immediate predecessor.  A final vertex is called a {\it leaf}.  A
disjoint union of trees is called a {\it forest}.

Next, from \cite[\S~2]{K--P}, recall the definition of a minimal
Enriques diagram.  It is a finite forest $\I D$ with additional
structure.  Namely, each vertex $V$ is assigned a weight $\Wt(V)$, which
is an integer at least 1. Also, the forest is equipped with a binary
relation; if one vertex $V$ is related to another $U$, then we say that
$V$ is {\it proximate\/} to $U$, and write $V \succ U$.  If $U$ is a
remote predecessor of $V$, then we call $V$  a {\it satellite of}
$U$; if not, then we say $V$ is {\it free}.  Thus a root is free, and a
leaf can be either free or a satellite.

Elaborating on \cite{K--P}, call $\I D$ an {\it Enriques diagram\/} if
$\I D$ obeys these three laws:

{\smallskip\par
 \advance\leftskip by 2\parindent\it\parindent=0pt
 \(Law of Proximity)\enspace
 A root is proximate to no vertex.  If a vertex is not a root, then it
is proximate to its immediate predecessor and to at most one other
vertex; the latter must be a remote predecessor.  If one vertex is
proximate to a second, and if a distinct third lies between the two,
then it too is proximate to the second.

 \smallskip

 \(Proximity Inequality)\enspace  For each vertex $V$,
        $$m_V\ge\tsum_{W\succ V}m_W.$$

 \smallskip

 \(Law of Succession)\enspace A vertex may have any number of\/ {\rm free}
immediate successors, but at most two immediate successors
may be satellites, and they must be satellites of different vertices.
 \smallskip\par
}

Notice that, by themselves, the Law of Proximity and the Proximity
Inequality imply that a vertex $V$ has at most $m_V$ immediate
successors; so, although this property is included in the statement of
the Law of Succession in  \cite[\S~2]{K--P}, it is omitted here.

Recovering the notion in \cite{KP99}, call an Enriques diagram $\I D$
{\it minimal\/} if $\I D$ obeys the following fourth law:
 {\smallskip\par
 \advance\leftskip by 2\parindent\it\parindent=0pt
 \(Law of Minimality)\enspace There are only finitely many vertices, and
every leaf of weight $1$ is a satellite.
 \smallskip\par
}
 \noindent In \cite{KP99}, the Law of Minimality did not include the
present finiteness restriction; rather, it was imposed at the outset.
\end{sbs}

\begin{sbs}[Unweighted diagrams] \label{sb-Ud}
In \cite[\S1]{Roe01}, Ro\'e defines an Enriques diagram to be an {\it
unweighted\/} finite forest that is equipped with a binary relation,
called ``proximity,'' that is required to satisfy five conditions.  It
is not hard to see that his conditions are equivalent to our Laws of
Proximity and Succession.  Let us call this combinatorial structure an
{\it unweighted Enriques diagram}.

Let $\I U$ be any directed graph on $n+1$ vertices.  By an
{\it ordering\/} of $\I U$, let us mean a bijective mapping
	$$\theta\:\I U\risom\{0,\dotsc,n\}$$
such that, if one vertex $V$ precedes another $W$, then $\theta(V)\le
\theta(W)$.  Let us call the pair $(\I U,\,\theta)$ an {\it ordered
directed graph}.

An ordering $\theta$ need not be unique.  Furthermore, if one exists,
then plainly $\I U$ has no loops.  Conversely, if $\I U$ has no
loops\emdash if it is a forest\emdash then $\I U$ has at least one
ordering.  Indeed, then $\I U$ has a leaf $L$.  Let $\I T$ be the complement
of $L$ in $\I U$.  Then $\I T$ inherits the structure of a forest.
So, by induction on $n$, we may assume that $\I T$ has an ordering.
Extend it to $\I U$ by mapping $L$ to $n$.

Associated to any  ordered unweighted Enriques diagram $(\I U,\,\theta)$
is its {\it proximity matrix\/}  $(p_{ij})$, which  is the  $n+1$
by  $n+1$ lower
triangular matrix defined by
	$$p_{ij}:=\begin{cases}
	  \hfill1, &\text{if } i=j;\\
              -1, &\text{if $\theta^{-1}i$ is proximate to }\theta^{-1}j;\\
	  \hfill0, &\text{otherwise.}
	  \end{cases}$$
The transpose was introduced by Du Val in 1936, and he named it the
``proximity matrix'' in 1940; Lipman \cite[p.~298]{Li94} and others have
followed suit.  The definition here is the one used by Ro\'e
\cite{Roe01} and Casas \cite[p.~139]{Ca00}.

Note that $(\I U,\,\theta)$ is determined up to unique isomorphism by
$(p_{ij})$.
\end{sbs}

\begin{sbs}[Proximity structure]\label{sbs-ps}
Let $\I U$ be a finite set equipped with a binary relation.  Call $\I U$
a {\it proximity structure,} its elements {\it vertices,} and the
relation {\it proximity\/} if the following three laws are obeyed:
 {\smallskip\par
 \advance\leftskip by 3\parindent\parindent=0pt\it
\noindent\llap{\(P1)\enspace}No vertex is proximate to itself; no two
vertices are each proximate to the other.

\noindent\llap{\(P2)\enspace}Every vertex is proximate to at most two
others; if to two, then one of the two is proximate to the other.

\noindent\llap{\(P3)\enspace}Given two vertices, at most one other is
proximate to them both.
 \smallskip\par
}
A proximity structure supports a natural structure of directed graph.
Indeed, construct an edge proceeding from one vertex $V$ to another $W$
whenever either $W$ is proximate only to $V$ or $W$ is proximate both to
$V$ and $U$ but $V$ is proximate to $U$ (rather than $U$ to $V$).  Of
course, this graph may have loops; for example, witness a triangle with
each vertex proximate to the one clockwise before it, and witness a
pentagon with each vertex proximate to the two clockwise before it.

Let us say that a proximity structure is {\it ordered\/} if its vertices
are numbered, say $V_0,\dotsc,V_n$, such that, if $V_i$ is proximate to
$V_j$, then $i>j$.
\end{sbs}

\begin{prp}\label{prp-ps}
The unweighted Enriques diagrams sit in natural bijective correspondence
with the proximity structures whose associated graphs have no loops.
\end{prp}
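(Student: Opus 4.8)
The plan is to establish a bijection between the two combinatorial structures by constructing the correspondence in both directions and checking that they are mutually inverse. Recall that an unweighted Enriques diagram is a finite forest equipped with a proximity relation satisfying the Law of Proximity and the Law of Succession, while a proximity structure is a finite set with a proximity relation satisfying (P1)--(P3). Each proximity structure already carries a canonically associated directed graph, as constructed in Subsection~\ref{sbs-ps}. So the content of the proposition is that the \emph{proximity relation} of an unweighted Enriques diagram, viewed in isolation on its vertex set, satisfies (P1)--(P3) and recovers the original forest structure via the canonical graph construction; and conversely that a loop-free proximity structure, equipped with its associated graph, satisfies the two Laws.

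First I would go from an unweighted Enriques diagram $\I U$ to a proximity structure. I forget the forest structure and retain only the proximity relation on the vertex set. I must verify (P1)--(P3). For (P1), the Law of Proximity says a root is proximate to nothing and every other vertex is proximate to predecessors; since the forest has no loops, no vertex can be proximate to itself, and two vertices cannot each be a proper predecessor of the other, so neither can be proximate to the other. For (P2), the Law of Proximity directly asserts each non-root is proximate to its immediate predecessor and to at most one other vertex, a remote predecessor; I would check that when a vertex $V$ is proximate to both $U_1$ (immediate) and $U_2$ (remote), then $U_1$ lies strictly between $U_2$ and $V$ on the tree, whence $U_1$ is a proper successor of $U_2$ and, by the Law of Proximity's third clause, $U_1$ is proximate to $U_2$\emdash giving exactly the ``one of the two is proximate to the other'' condition. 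For (P3), the Law of Succession limits the satellite immediate successors: I would argue that if a single vertex $W$ were proximate to two given vertices $V_1, V_2$, then $W$ is free over one of them and a satellite of the other, and uniqueness follows from the forest structure plus the constraint that the two satellite immediate successors of any vertex must be satellites of \emph{different} vertices.

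Next I would go from a loop-free proximity structure to an unweighted Enriques diagram, using the associated graph of Subsection~\ref{sbs-ps}. The crucial point is that loop-freeness forces this graph to be a forest: I would show that each vertex has at most one immediate predecessor under the edge rule, so that no vertex has two incoming ``immediate'' edges, and absence of loops then yields the tree-within-forest structure. Then I would verify the two Laws. The Law of Proximity is essentially a translation: a vertex proximate to two others is, by (P2), proximate to $V$ and $U$ with (say) $V$ proximate to $U$, so by the graph construction the edge runs from $V$ to $W$ and $U$ is a remote predecessor, matching the law's requirement; its third clause (transitivity of proximity along an intermediate vertex) I would recover from (P2) and (P3). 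The Law of Succession, bounding satellite immediate successors, I would extract from (P3): two distinct vertices each proximate to a common vertex are forbidden, which is exactly what prevents two satellites of the \emph{same} vertex.

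The main obstacle I expect is the careful bookkeeping showing that the canonical graph construction and the forest structure are genuinely inverse to one another\emdash that is, that starting from $\I U$, forgetting to the proximity relation, and rebuilding the graph returns the \emph{same} edges. This requires showing the edge rule ``$W$ is proximate only to $V$, or $W$ is proximate to both $V$ and $U$ with $V\succ U$'' reconstructs precisely the immediate-predecessor relation of the original forest, and here the interplay between free and satellite vertices is delicate: a free vertex is proximate only to its immediate predecessor, directly giving one edge, while a satellite is proximate to two vertices and the rule must single out the immediate predecessor as the edge source. Verifying this disambiguation in all configurations, and confirming it is forced rather than merely consistent, is where the real work lies; the loop-free hypothesis is exactly what guarantees the reconstruction is unambiguous.
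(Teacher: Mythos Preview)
Your overall plan matches the paper's proof: verify (P1)--(P3) from the Laws in one direction, and recover the forest structure and the Laws from (P1)--(P3) plus loop-freeness in the other. Two points need sharpening, however.

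First, your reading of (P3) in deriving the Law of Succession is misstated: (P3) does \emph{not} forbid two vertices proximate to a common single vertex; it forbids two vertices proximate to a common \emph{pair}. The paper's argument runs: if $W,W'$ are both immediate successors of $V$ and both satellites of the same $U$, then each is proximate to the pair $\{V,U\}$, violating (P3); hence $U'\neq U$. A third satellite immediate successor $W''$ of $V$, proximate to some $U''$, would then force $V$ to be proximate to three distinct vertices $U,U',U''$, violating (P2). Both (P2) and (P3) are needed.

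Second, in the reverse direction you correctly sense that showing the second vertex $U$ to which $W$ is proximate is genuinely a \emph{remote predecessor} of $W$ in the associated graph is the delicate point, but you do not indicate how to do it. The paper uses a chain argument: since $V$ (the immediate predecessor of $W$) is proximate to $U$, it is not a root, so has an immediate predecessor $V'$; the edge rule then forces either $V'=U$ or $V'\succ U$; iterate. Finiteness terminates the chain, proving simultaneously that $U$ precedes $W$ and that every intermediate vertex is proximate to $U$\emdash which is exactly the third clause of the Law of Proximity. Your plan to recover that clause ``from (P2) and (P3)'' is too vague; in fact (P3) plays no role at that step.
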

\begin{proof}
First, take an unweighted Enriques diagram, and let's check that its
proximity relation obeys Laws (P1) to (P3).  

A vertex is proximate only to a proper successor; so no vertex is
proximate to itself.  And, if two vertices were proximate to one
another, then each would succeed the other; so there would be a loop.
Thus (P1) holds.

A root is proximate to no vertex.  Every other vertex $W$ is proximate
to its immediate predecessor $V$ and to at most one other vertex $U$,
which must be a remote predecessor.  Since an immediate predecessor is
unique in a forest, $V$ must lie between $W$ and $U$; whence, $V$ must
be proximate to $U$.  Thus (P2) holds.

Suppose two vertices $W$ and $X$ are each proximate to two others $U$
and $V$.  Say $V$ is the immediate predecessor of $W$.  Then $U$ is a
remote predecessor of $W$; so $U$ precedes $V$.  Hence $V$ is also the
immediate predecessor of $X$, and $W$ is also a remote predecessor of
$X$.  Thus both $W$ and $X$ are immediate successors of $V$, and both
are satellites of $W$; so the Law of Succession is violated.  Thus (P3)
holds.

Conversely, take a proximity structure whose associated graph has no
loops.  Plainly, a root is proximate to no vertex.  Suppose a vertex $W$
is not a root.  Then $W$ has an immediate predecessor $V$.  Plainly, $W$
is proximate to $V$.  Plainly, $W$ is proximate to at most one other
vertex $U$, and if so, then $V$ is proximate to $U$.  Since $U$ cannot
also be proximate to $V$ by (P1), it follows that $V$ is the only
immediate predecessor to $W$.

Every vertex is, therefore, preceded by a unique root.  Plainly the
connected component of each root is a tree.  Thus the graph is a finite
forest.

Returning to $U$, $V$, and $W$, we must show that $U$ precedes $W$.
Now, $V$ is proximate to $U$.  So $V$ is not a root.  Hence $V$ has an
immediate predecessor $V'$.  If $V'= U$, then stop.  If not, then $V'$
is proximate to $U$ owing to the definition of the associated graph,
since $V$ is proximate to $U$.  Hence, similarly, $V'$ has an immediate
predecessor $V''$.  If $V''= U$, then stop.  If not, then repeat the
process.  Eventually, you must stop since the number of vertices is
finite.  Thus $U$ precedes $W$.  Furthermore, every vertex between $U$
and $W$ is proximate to $U$.  Thus the Law of Proximity holds.

Continuing with $U$, $V$, and $W$, suppose that $W'$ is a second
immediate successor of $V$ and that $W'$ is also proximate to a vertex
$U'$.  Then $U'\neq U$ since at most one vertex can be proximate to both
$V$ and $U$ by (P3).

  Finally, suppose that $W''$ is a third immediate
successor of $V$ and that $W''$ is also proximate to a vertex $U''$.
Then $U''\neq U$ and $U''\neq U'$ by what we just proved.  But $V$ is
proximate to each of $U$, $U'$, and $U''$.  So (P2) is violated.  Thus
the Law of Succession holds, and the proof is complete.
\end{proof}

\begin{cor}\label{cor-ps}
The ordered unweighted Enriques diagrams sit in natural bijective
correspondence with the ordered proximity structures.
\end{cor}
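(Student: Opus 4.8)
The plan is to lean on Proposition~\ref{prp-ps} and reduce the corollary to checking that the two notions of ordering coincide. By that proposition, an unweighted Enriques diagram and its associated proximity structure share the same vertex set $\I U$ and the same proximity relation $\succ$, and the forest underlying the diagram is precisely the associated graph of the structure; in particular the two induce the same precedence relation on $\I U$. An ordering in either sense is a bijection $\theta\:\I U\risom\{0,\dotsc,n\}$, so it suffices to show that the constraint defining an ordering of the Enriques diagram---namely $\theta(V)\le\theta(W)$ whenever $V$ precedes $W$---is equivalent to the constraint defining an ordering of the proximity structure---namely $\theta(W)<\theta(V)$ whenever $V\succ W$. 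Granting this equivalence, orderings match under the bijection of Proposition~\ref{prp-ps}, and the naturality there is inherited.

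First I would treat the passage from an Enriques ordering to a proximity ordering. Suppose $V\succ W$. Then $V$ is not a root, so by the Law of Proximity $W$ is a predecessor of $V$, and $W\neq V$ by (P1); hence the Enriques constraint gives $\theta(W)\le\theta(V)$, and injectivity of $\theta$ upgrades this to $\theta(W)<\theta(V)$, which is exactly the proximity constraint.

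The converse is the crux. Suppose $\theta$ satisfies the proximity constraint and that $V$ properly precedes $W$ in the forest. I would write the unique chain of immediate predecessors $V=X_0,X_1,\dotsc,X_m=W$, so that $X_i$ is the immediate predecessor of $X_{i+1}$ for each $i$. Each $X_{i+1}$ is a non-root vertex, so by the Law of Proximity it is proximate to its immediate predecessor, giving $X_{i+1}\succ X_i$; the proximity constraint then yields $\theta(X_i)<\theta(X_{i+1})$, and telescoping gives $\theta(V)<\theta(W)$, hence the Enriques constraint. The main obstacle is exactly this step: one must know that precedence in a forest is the transitive closure of the immediate-predecessor relation and that every such elementary step is witnessed by a proximity, both of which are furnished by the structure of a forest together with the Law of Proximity.

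Having shown the two constraints equivalent, I would conclude that, under the bijection of Proposition~\ref{prp-ps}, the orderings of an unweighted Enriques diagram correspond bijectively to the orderings of the associated proximity structure. Since that bijection is natural, so is the induced bijection between ordered unweighted Enriques diagrams and ordered proximity structures, which completes the proof.
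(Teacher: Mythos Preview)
Your argument is essentially the paper's, but with a small logical gap in how you set it up. Proposition~\ref{prp-ps} only puts unweighted Enriques diagrams in bijection with proximity structures \emph{whose associated graph has no loops}. You invoke that bijection at the outset and then check that the two notions of ordering agree on corresponding pairs. That shows ordered Enriques diagrams correspond to ordered \emph{loopless} proximity structures; to get the corollary as stated you still need that every ordered proximity structure is loopless, so that Proposition~\ref{prp-ps} applies to it in the first place.

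The paper handles exactly this point: starting from an ordered proximity structure, it observes that the associated directed graph is itself ordered by the same numbering (since every edge goes from $V$ to $W$ with $W\succ V$, hence to a higher index), and therefore has no loops; only then does it invoke Proposition~\ref{prp-ps}. Your chain argument in the converse direction is in fact the detailed version of the paper's ``plainly,'' but you run it \emph{after} assuming you already have a forest, when it is precisely what is needed to justify that assumption. Reordering your steps---first use the proximity ordering to rule out directed cycles in the associated graph, then apply Proposition~\ref{prp-ps}, then your chain argument---closes the gap and makes the proof complete.
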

\begin{proof}
Given an unweighted Enriques diagram, its proximity relation
obeys Laws (P1) to (P3) by the proof of Proposition~\ref{prp-ps}.  And,
if one vertex $V$ is proximate to another $W$, then $W$ precedes $V$.
So $\theta(W)<\theta(V)$ for any ordering $\theta$.  Hence, if $V$ is
numbered $\theta(V)$ for every $V$, then the proximity structure is
ordered.

Conversely, take an ordered proximity structure.  The associated
directed graph is, plainly, ordered too, and so has no loops.
And, the Laws of Proximity and Succession hold by the proof of
Proposition~\ref{prp-ps}.  Thus the corollary holds.
\end{proof}

\begin{sbs}[Numerical characters] \label{sb:nchar} In \cite[Sct.~2,
p.~214]{KP99}, a number of numerical characters were introduced, and
three of them are useful in the present work.

The first character makes sense for any unweighted Enriques
diagram $\I U$, although it was not defined in this generality before;
namely, the {\it dimension\/} $\dim(\I U)$ is the number of roots plus
the number of free vertices in $\I U$, including roots.  Of course, the
definition makes sense for
a weighted Enriques diagram $\I D$; namely, the {\it dimension\/}
$\dim(\I D)$ is simply the dimension of the underlying unweighted
diagram.

The second and third characters make sense only for a weighted Enriques
diagram $\I D$; namely, the {\it degree\/} and {\it codimension\/} are
defined by the formulas
  \begin{align*}
        \deg(\I D)&:=\tsum_{V\in\I D}\binom{\Wt(V)+1}{2};\\
        \cod(\I D)&:=\deg(\I D)-\dim(\I D).
  \end{align*}

It is useful to introduce a new character, the {\it type\/} of a vertex
$V$ of  $\I U$ or of  $\I V$.  It is defined by the formula
        $$\type(V):=\begin{cases}0,&\text{if $V$ is a satellite;}\\
                        1,&\text{if $V$ is a free vertex, but not a root;}\\
                        2,&\text{if $V$ is a root.}
                     \end{cases}$$
The type appears in the following two formulas:
 \begin{align}
 \dim(\I A)&=\tsum_{V\in\I A}\type(V);
 \label{eq:2-1}\\
 \cod(\I A)&=\tsum_{V\in\I A}\bigl[\binom{\Wt(V)+1}2-\type(V)\bigr].
 \label{eq:2-2}
 \end{align}
Formula~\ref{eq:2-2} is useful because every summand is nonnegative in
general and positive
when $\I A$ is a minimal Enriques diagram.
 \end{sbs}

\begin{sbs}[The diagram of a curve] \label{sb:curve} Let $C$ be a
reduced curve lying on a smooth surface over an algebraically closed
ground field; the surface need not be complete.  In \cite[Sec.~2,
p.~213]{KP99} and again in \cite[Sec.~2, p.~72]{K--P}, we stated that,
to $C$, we can associate a minimal Enriques diagram $\I D$.  (It
represents the equisingularity type of $C$; this aspect of the theory is
treated in \cite[p.~99]{Ca00} and \cite[pp.~543--4]{GLS98}.)  Here is
more explanation about the construction of $\I D$.

First, form the configuration of all arbitrarily near points of the
surface lying on all the branches of the curve through all its singular
points.  Say that one arbitrarily near point is {\it proximate\/} to a
second if the first lies above the second and on the strict transform of
the exceptional divisor of the blowup centered at the second.  Then Laws
(P1) to (P3) hold because three strict transforms never meet and, if two
meet, then they meet once and transversely.  Plainly, there are no
loops.  Hence, by Proposition~\ref{prp-ps}, this configuration is an
unweighted Enriques diagram.

Second, weight each arbitrarily near point with its multiplicity as a
point on the strict transform of the curve.  By the theorem of strong
embedded resolution, all but finitely many arbitrarily near points are of
multiplicity 1, and are proximate only to their immediate predecessors;
prune off all the infinite unbroken successions of such points, leaving
finitely many points.  Then the Law of Minimality holds.

Finally, the Proximity Inequality holds for this well-known reason: the
multiplicity of a point $P'$ on a strict transform $C'$ can be computed
as an intersection number $m$ on the blowup at $P'$ of the surface
containing $C'$; namely, $m$ is the intersection number of the
exceptional divisor and the strict transform of $C'$; the desired
inequality results now from Noether's formula for $m$ in terms of
multiplicities of arbitrarily near points.  (In \cite[p.~83]{Ca00}, the
inequality is an equality, because no pruning is done.)  Therefore, this
weighted configuration is a minimal Enriques diagram.  It is $\I D$.

Notice that, if $K$ is any algebraically closed extension field of the
ground field, then the curve $C_K$ also has diagram  $\I D$.
\end{sbs}

\begin{prp}\label{pr:2-7} Let $C$ be a reduced curve lying on a smooth
surface over an algebraically closed field.  Let $\I D$ be the minimal
Enriques diagram of $C$, and $P\in C$ a singular point of multiplicity
$m$.  Form the blowup of the surface at $P$, the exceptional divisor
$E$, the proper transform $C'$ of $C$, and the union $C'':=C'\cup E$.
Let $\I D'$ be the diagram of $C'$, and $\I D''$ that of $C''$.  Then
        $$\textstyle\cod(\I D)-\cod(\I D')\ge\binom{m+1}2-2
        \text{ and }\cod(\I D)-\cod(\I D'')=\binom m2-2;$$
  equality holds in the first relation if and only if  $P$ 
is an ordinary $m$-fold point.
 \end{prp}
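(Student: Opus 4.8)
The plan is to reduce to the local situation at $P$ and to read everything off Formula~\ref{eq:2-2}. Since $\I D$ is the disjoint union of the diagram at $P$ and the diagrams at the remaining singular points, and the blowup at $P$ leaves the latter untouched, I would first reduce to the case that $C$ is singular only at $P$, writing $V_P$ for the root of $\I D$, so that $m_{V_P}=m$. The one general fact I would isolate at the outset is that a \emph{free} vertex of weight $1$ contributes $\binom{2}{2}-1=0$ to Formula~\ref{eq:2-2}; consequently $\cod$ is insensitive to adjoining or deleting such vertices, and I may replace each minimal diagram by the full unpruned configuration of arbitrarily near points. This is what makes the three diagrams directly comparable despite the very different pruning in $\I D$, $\I D'$ and $\I D''$ (for instance $\I D'=\varnothing$ already when $P$ is a node or a cusp).

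Next I would set up the dictionary relating the three configurations, which now share the same arbitrarily near points lying over $P$. Passing from $\I D$ to either $\I D'$ or $\I D''$ deletes $V_P$ and promotes its immediate successors $Q_1,\dots,Q_s$\emdash the first-neighborhood points, all lying on $E$\emdash to roots; moreover proximity to $V_P$ is forgotten on the blowup, so every satellite of $V_P$ becomes free. Thus the only type changes are $\type\colon 1\mapsto 2$ at each $Q_i$ and $\type\colon 0\mapsto 1$ at each satellite of $V_P$. The weights are where the two cases part: the points carry their multiplicities on the strict transform of $C$, which is $C'$, so $\I D'$ keeps all weights; but for $\I D''$ the extra smooth branch $E$ raises the multiplicity by $1$ at precisely the points lying on the strict transform of $E$, i.e. at precisely the vertices proximate to $V_P$, giving $m_V^{\I D''}=m_V+[\,V\succ V_P\,]$ for $V\ne V_P$.

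I would then subtract the relevant instances of Formula~\ref{eq:2-2}. For the first relation, $V_P$ contributes $\binom{m+1}{2}-2$, each $Q_i$ with $m_{Q_i}\ge 2$ contributes $+1$ via its type jump (the weight-$1$ ones being invisible), and each satellite of $V_P$ contributes $+1$; so $\cod(\I D)-\cod(\I D')=\binom{m+1}{2}-2+a+b$, where $a=\#\{i:m_{Q_i}\ge 2\}$ and $b$ is the number of satellites of $V_P$. Since $a,b\ge 0$ this yields the inequality, and $a=b=0$ holds exactly when $C'$ is smooth and meets $E$ transversally in $m$ distinct points\emdash that is, when $P$ is ordinary\emdash which is the equality clause. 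For the second relation the weight shift combines with the type jump so that each $W\succ V_P$ contributes $-m_W$ (using $\binom{k+2}{2}-\binom{k+1}{2}=k+1$), giving $\cod(\I D)-\cod(\I D'')=\binom{m+1}{2}-2-\sum_{W\succ V_P}m_W$. Here I would invoke the proximity equality (Noether's formula) $\sum_{W\succ V_P}m_W=m_{V_P}=m$; since $\binom{m+1}{2}-m=\binom{m}{2}$, the right-hand side collapses to $\binom{m}{2}-2$, as required.

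The main obstacle is the dictionary of the second paragraph, not the arithmetic of the third. I expect the delicate points to be: verifying that proximity to $V_P$ really is lost upon blowing up (so that satellites of $V_P$ become free and contribute as claimed), and pinning down that the additional multiplicity carried by $E$ in $\I D''$ sits exactly on the vertices proximate to $V_P$. The reduction to full configurations, together with the observation that free weight-$1$ vertices are invisible to $\cod$, is the device that absorbs all the pruning discrepancies between the minimal diagrams of $C$, $C'$ and $C''$; once it is in place, the proximity equality does the rest and the two displayed formulas follow.
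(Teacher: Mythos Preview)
Your strategy is the paper's: both localize at $P$ and read the differences off Formula~(\ref{eq:2-2}) term by term. For $\I D''$ your appeal to the proximity \emph{equality} $\sum_{W\succ V_P}m_W=m$ (valid in the full configuration by Noether) is exactly what the paper encodes by adjoining $m-\sum_{T\succ R}m_T$ isolated weight-$2$ roots; the arithmetic is identical.

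For $\I D'$ the dictionary needs tightening. Your ``invisible'' device covers free \emph{non-root} weight-$1$ vertices, but a weight-$1$ \emph{root} contributes $\binom{2}{2}-2=-1$, not $0$. So you cannot promote a weight-$1$ $Q_i$ to a root of your unpruned $\I D'$ and then call it invisible: on an ordinary triple point your literal dictionary gives $\cod(\I D)-\cod(\I D')=4-(-3)=7$ instead of $4$. What actually happens is that when $m_{Q_i}=1$ the point $Q_i$ is smooth on $C'$, so it together with its entire chain of successors is \emph{absent} from $\I D'$; this is precisely the paper's deletion rule. With that said, your formula $\binom{m+1}{2}-2+a+b$ is correct and coincides with the paper's $\binom{m+1}{2}-2+\sigma+\rho$: the Proximity Inequality forces the subtree below any weight-$1$ immediate successor of $R$ to be a chain of weight-$1$ vertices all proximate to $R$, so an immediate successor is undeleted iff its weight is $\ge 2$, giving $\rho=a$; and satellites are never pruned, giving $\sigma=b$. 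The equality clause then goes through as you wrote it.
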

\begin{proof} We obtain $\I D'$ from $\I D$ by deleting the root $R$
corresponding to $P$ and also all the vertices $T$ that are of weight 1,
proximate to $R$, and such that all successors of $T$ are also (of
weight 1 and) proximate to $R$ (and so deleted too).  Note that an
immediate successor of $R$ is free; if it is deleted, then it has weight
1, and if it is not deleted, then it becomes a root of $\I D'$.  Also,
by the Law of Proximity, an undeleted satellite of $R$ becomes
a free vertex of $\I D'$.

Let $\sigma$ be the total number of satellites of $R$, and $\rho$ the
number of undeleted immediate successors.  Then it follows from
the  Formula~(\ref{eq:2-2}) that
    $$\textstyle\cod(\I D)-\cod(\I D')=\binom{m+1}2-2+\sigma+\rho.$$
Thus the asserted inequality holds, and it is an equality if and
only if $\sigma=0$ and $\rho=0$.  So it is an equality if $P$ is an
ordinary $m$-fold point.

Conversely, suppose $\sigma=0$ and $\rho=0$.  Then $R$ has no immediate
successor $V$ of weight 1 for the following reason.  Otherwise, any
immediate successor $W$ of $V$ is proximate to $V$ by the Law of
Proximity.  So $W$ has weight 1 by the Proximity Inequality.  Hence, by
recursion, we conclude that $V$ is succeeded by a leaf $L$ of weight 1.
So, by the Law of Minimality, $L$ is a satellite.  But $\sigma=0$.
Hence $V$ does not exist.  But $\rho=0$.  Hence $R\/$ has no successors
whatsoever.  So $P$ is an ordinary $m$-fold point.

Furthermore, we obtain $\I D''$ from $\I D$ by deleting $R$ and by
adding 1 to the weight of each $T$ proximate to $R$.  So a satellite of
$R$ becomes a free vertex of $\I D''$, and an immediate successor of $R$
becomes a root of $\I D''$.  In addition, for each smooth branch of $C$
that is transverse at $P$ to all the other branches, we adjoin an
isolated vertex (root) of weight 2.

The number of adjoined vertices is $m-\tsum_{T\succ R}\Wt(T)$.  So, by
Formula~(\ref{eq:2-2}),
  \begin{multline*}
 \textstyle\cod(\I D)-\cod(\I D'')=\binom{m+1}2-2
        +\sum_{T\succ R}\bigl[\binom{\Wt(T)+1}2-\type(T)\bigr]\hfill\\
  \hfill\textstyle -\sum_{T\succ R}\bigl[\binom{\Wt(T)+2}2-
  (\type(T)+1)\bigr]-\bigl[m-\tsum_{T\succ R}\Wt(T)\bigr].
 \end{multline*}
 The right hand side reduces to $\binom m2-2$.  So the asserted equality
 holds.
 \end{proof}

\section{Infinitely near points}\label{sc:Inp}
 Fix a smooth family of geometrically irreducible surfaces $\pi\:F\to Y$.
 In this section, we study sequences of arbitrarily near $T$-points of
 $F/Y$.  They are defined in Definition~\ref{dfn:INP}.  Then
 Proposition~\ref{prp-Har} asserts that they form a representable
 functor.  In essence, this result is due to Harbourne \cite[Prp.~I.2,
 p.~104]{Hb86}, who identified the functor of points of the iterated
 blow-up that was introduced in \cite[Sct.~4.1, p.~36]{Kl81} and is
 recalled in Definition~\ref{dfn:DerFam}. 

 In the second half of this section, we study a special kind of sequence
 of arbitrarily near $T$-points, the {\it strict} sequence, which is
 defined in Definition~\ref{dfn:sseq}.  To each strict sequence is
 associated a natural ordered unweighted Enriques diagram owing to
 Propositions~\ref{prp-assdiag} and \ref{prp-ps}.  Finally,
 Theorem~\ref{thm:3-2} asserts that the strict sequences with given
 diagram $(\I U,\,\theta)$ form a functor, which is representable by a
 $Y$-smooth scheme with irreducible geometric fibers of dimension
 $\dim(\I U)$.  This theorem was inspired by Ro\'e's Proposition 2.6 in
 \cite{Roe01}.

 \begin{dfn}\label{dfn:DerFam}
  By induction on $i\ge0$, let us define more families
   $$\pi^{(i)}\:F^{(i)}\to F^{(i-1)},$$
 which are like $\pi\:F\to Y$.  Set $\pi^{(0)}:=\pi$.  Now, suppose
 $\pi^{(i)}$ has been defined.  Form the fibered product of $F^{(i)}$
 with itself over $F^{(i-1)}$, and blow up along the diagonal
 $\Delta^{(i)}$.  Take the composition of the blowup map and the second
 projection to be $\pi^{(i+1)}$.

 In addition,  for $i\ge1$, let $\vf^{(i)}\: F^{(i)} \to F^{(i-1)}$ be
 the composition of the blowup map and the first projection, and let
 $E^{(i)}$ be the exceptional divisor.  Finally,
 set $\vf^{(0)}:=\pi$; so $\vf^{(0)}=\pi^{(0)}$.
  \end{dfn}

 \begin{lem}\label{lem:DerFam}
  Both $\pi^{(i)}$ and $\vf^{(i)}$ are smooth, and have geometrically
 irreducible fibers of dimension $2$.  Moreover, $E^{(i)}$ is equal, as
 a polarized scheme, to the bundle $\bb P(\Omega^1_{\pi^{(i-1)}})$ over
 $F^{(i-1)}$, where $\Omega^1_{\pi^{(i-1)}}$ is the sheaf of relative
 differentials.
  \end{lem}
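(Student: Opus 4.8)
The plan is to prove Lemma~\ref{lem:DerFam} by induction on $i$, exploiting the inductive construction of the families in Definition~\ref{dfn:DerFam}. The base case $i=0$ is the hypothesis that $\pi=\pi^{(0)}=\vf^{(0)}$ is smooth with geometrically irreducible fibers of dimension $2$. For the inductive step, I would assume the statement for $\pi^{(i)}$ and $\vf^{(i)}$ and deduce it for $\pi^{(i+1)}$ and $\vf^{(i+1)}$.

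First I would treat $\pi^{(i+1)}$. By construction, $F^{(i+1)}$ is the blowup of the fibered self-product $F^{(i)}\x_{F^{(i-1)}}F^{(i)}$ along the diagonal $\Delta^{(i)}$, and $\pi^{(i+1)}$ is the blowup map followed by the second projection. The key point is that $\Delta^{(i)}$ is a regularly embedded subscheme of the self-product: since $\pi^{(i)}$ is smooth of relative dimension $2$, the diagonal is smooth of codimension $2$ over $F^{(i)}$ via either projection. Blowing up a regularly embedded center commutes with base change, so the fibers of $\pi^{(i+1)}$ over a geometric point of $F^{(i)}$ are the blowups of the corresponding surface fiber of $\pi^{(i)}$ along a reduced point. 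A blowup of a smooth surface at a point is again a smooth, geometrically irreducible surface; hence each geometric fiber of $\pi^{(i+1)}$ is smooth, irreducible, and of dimension $2$. Smoothness of $\pi^{(i+1)}$ itself then follows from the fiberwise smoothness together with flatness, which holds because the blowup of a regular center is flat over the base when the fibers are equidimensional; alternatively, one checks smoothness directly on the Proj of the Rees algebra. The argument for $\vf^{(i+1)}$ is entirely parallel, using the first projection in place of the second; by the symmetry of the self-product construction, the same reasoning applies verbatim.

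For the final assertion identifying $E^{(i)}$ with $\bb P(\Omega^1_{\pi^{(i-1)}})$, I would recall the standard description of the exceptional divisor of a blowup along a regularly embedded center: it is the projectivized normal bundle. Here the center is the diagonal $\Delta^{(i-1)}\subset F^{(i-1)}\x_{F^{(i-2)}}F^{(i-1)}$ (with the appropriate indexing), and its normal bundle, pulled back to $F^{(i-1)}$ along the diagonal embedding, is canonically the sheaf of relative differentials $\Omega^1_{\pi^{(i-1)}}$. This is the familiar fact that the conormal sheaf of the diagonal of a smooth morphism is $\Omega^1$. Thus $E^{(i)}=\bb P(N_{\Delta^{(i-1)}})=\bb P(\Omega^1_{\pi^{(i-1)}})$, and the polarization on $E^{(i)}$ is the tautological $\CO(1)$, matching the stated polarized-scheme isomorphism.

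The main obstacle I anticipate is establishing smoothness of $\pi^{(i+1)}$ as a morphism, rather than merely fiberwise smoothness, and confirming that formation of the blowup commutes with the arbitrary base changes implicit in checking the fibers. The cleanest route is to verify that $\Delta^{(i)}$ is regularly embedded of the correct codimension so that the blowup is a \emph{relative} construction compatible with base change, and then invoke the fibral criterion for smoothness (flatness plus smooth fibers). I would be careful about the indexing of which projection yields which family, since $\pi^{(i+1)}$ and $\vf^{(i+1)}$ differ only by a swap of the two projections, and the symmetry must be invoked explicitly to avoid repeating the argument twice.
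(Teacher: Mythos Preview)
Your proposal is correct and follows essentially the same inductive strategy as the paper. The paper's proof is considerably more compressed: it simply notes that both projections of the fibered product are smooth with geometrically irreducible surface fibers (by base change of $\pi^{(i)}$) and that the diagonal $\Delta^{(i)}$ is smooth over both factors, then asserts that smoothness of the blowup over each factor follows; your version unpacks this into the base-change-of-blowups plus fibral-criterion argument, which is a legitimate way to fill in the details. For the exceptional divisor, both you and the paper invoke the same standard fact that the conormal sheaf of the diagonal of a smooth morphism is the sheaf of relative differentials.
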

 \begin{proof}
   The first assertion holds for $i=0$ by hypothesis.  Suppose it holds
   for $i$.  Consider the fibered product formed in
   Definition~\ref{dfn:DerFam}.  Then both projections are smooth, and
   have geometrically irreducible fibers of dimension $2$; also, the
   diagonal $\Delta^{(i)}$ is smooth over both factors.  It follows that
   the first assertion holds for $i+1$.

 The second assertion holds because $\Omega^1_{\pi^{(i-1)}}$ is the
 conormal sheaf of $\Delta^{(i)}$.
  \end{proof}

 \begin{dfn}\label{dfn:INP}
  Let $T$ be a  $Y$-scheme.  Given a sequence of blowups
  \begin{equation*}\label{eq:3-1.1}
         F^{(n+1)}_T\xrightarrow{\vf^{(n+1)}_T} F^{(n)}_T\to\dotsb
         \to F'_T\xrightarrow{\vf^{(1)}_T}  F_T:=F\x_YT
  \end{equation*}
  whose $i$th center $T^{(i)}\subset F^{(i)}_T$ is the image of a section
 $t_{i}$ of $F^{(i)}_T/T$ for $0\le i\le n$, call $(t_0,\dots,t_n)$ a
 {\it sequence of arbitrarily near $T$-points\/} of $F/Y$.

 For $1\le i\le n+1$, denote the exceptional divisor in $F^{(i)}_T$ by
 $E^{(i)}_T$.
  \end{dfn}

 The following result is a version of Harbourne's Proposition~I.2 in
 \cite[p.~104]{Hb86}.

 \begin{prp}[Harbourne]\label{prp-Har}
  As $T$ varies, the sequences $(t_0,\dots,t_n)$ of arbitrarily near
 $T$-points of $F/Y$ form a functor, which is represented by
 $F^{(n)}/Y$.

 Given $(t_0,\dots,t_n)$ and $i$, say $(t_0,\dots,t_i)$ is represented by
 $\tau_{i}\:T\to F^{(i)}$.  Then $\pi^{(i)}\tau_{i}=\tau_{i-1}$ where
 $\tau_{-1}$ is the structure map.  Also,
 $F^{(i+1)}_T=F^{(i+1)}\x_{F^{(i)}}T$ where $F^{(i+1)}\to {F^{(i)}}$ is
 $\pi^{(i+1)}$; correspondingly, $t_{i}=(\tau_{i},1)$ and
 $E^{(i+1)}_T=E^{(i+1)}\x_{F^{(i)}}T$; moreover, $T^{(i)}$ is the
 scheme-theoretic image of $E^{(i+1)}_T$ under
 $\vf^{(i+1)}_T\:F^{(i+1)}_T\to F^{(i)}_T$.  Finally, $\vf^{(i+1)}_T$ is
 induced by $\vf^{(i+1)}$, and $F^{(i+1)}_T\to T$ is induced by
 $\pi^{(i+1)}$.
  \end{prp}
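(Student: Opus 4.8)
The plan is to argue by induction on $n$, exhibiting $F^{(n)}$ together with a universal sequence and reducing the entire statement to a single geometric fact: blowing up a smooth morphism along a section commutes with arbitrary base change. For $n=0$, a sequence of arbitrarily near $T$-points is just a section $t_0$ of $F^{(0)}_T=F\x_YT$ over $T$, equivalently a $Y$-map $\tau_0\:T\to F=F^{(0)}$; so the base case is immediate, with $t_0=(\tau_0,1)$.

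For the inductive step, suppose the functor of sequences $(t_0,\dots,t_{n-1})$ is represented by $F^{(n-1)}$ together with all the asserted identities through level $n-1$; in particular $F^{(n)}_T=F^{(n)}\x_{F^{(n-1)}}T$ via $\pi^{(n)}$ and $\tau_{n-1}$. Then a sequence $(t_0,\dots,t_n)$ is the same as its truncation $(t_0,\dots,t_{n-1})$\emdash equivalently a map $\tau_{n-1}\:T\to F^{(n-1)}$\emdash together with a section $t_n$ of $F^{(n)}_T/T$. Under the identification $F^{(n)}_T=F^{(n)}\x_{F^{(n-1)}}T$, such a section is exactly a lift $\tau_n\:T\to F^{(n)}$ of $\tau_{n-1}$ along $\pi^{(n)}$, and then $t_n=(\tau_n,1)$. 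Since giving $\tau_n$ already determines $\tau_{n-1}=\pi^{(n)}\tau_n$, giving the whole sequence is the same as giving a single $Y$-map $\tau_n\:T\to F^{(n)}$; this is precisely representability by $F^{(n)}$, the universal sequence being the one indexed by the identity of $F^{(n)}$, with universal sections $\tau_i=\pi^{(i+1)}\cdots\pi^{(n)}$.

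Thus the proposition hinges on the base-change identity $F^{(i+1)}_T=F^{(i+1)}\x_{F^{(i)}}T$ (via $\pi^{(i+1)}$ and $\tau_i$), which I would establish as follows. Writing $W:=F^{(i)}\x_{F^{(i-1)}}F^{(i)}$, so that $F^{(i+1)}=\mathrm{Bl}_{\Delta^{(i)}}W$ with $\pi^{(i+1)}=p_2\circ(\text{blowup})$ and $\vf^{(i+1)}=p_1\circ(\text{blowup})$, the key observation is that the diagonal $\Delta^{(i)}$ is the image of a section of the second projection $p_2\:W\to F^{(i)}$, which is smooth by Lemma~\ref{lem:DerFam}. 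Pulling $p_2$ back along $\tau_i$ identifies $W\x_{p_2,\tau_i}T$ with $F^{(i)}_T$ (using $\pi^{(i)}\tau_i=\tau_{i-1}$ and the previous level's identity) and carries $\Delta^{(i)}$ to the image $T^{(i)}$ of $t_i=(\tau_i,1)$. It then remains to know that $\mathrm{Bl}_{\Delta^{(i)}}W$ base-changes to $\mathrm{Bl}_{T^{(i)}}F^{(i)}_T=F^{(i+1)}_T$.

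That last point is the main obstacle, and I would isolate it as a lemma: if $g\:W\to S$ is smooth of relative dimension $r$ and $\sigma\:S\to W$ is a section with image $Z$, then for every $S'\to S$ one has $\mathrm{Bl}_Z W\x_S S'=\mathrm{Bl}_{Z'}W_{S'}$, where $Z'$ is the image of the induced section of $W_{S'}/S'$. The point is that such a section is a regular immersion of codimension $r$, and this remains true after arbitrary base change; hence $Z$ and $S'$ are Tor-independent over $S$, so the pullback of the Rees algebra $\bigoplus_d\CI_Z^d$ is the Rees algebra of $Z'$, and applying $\Proj$ yields the asserted identity together with the compatible identification of exceptional divisors. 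Applying this with $S=F^{(i)}$, $g=p_2$, $Z=\Delta^{(i)}$, and $S'=T$ via $\tau_i$ gives the base-change identity and simultaneously $E^{(i+1)}_T=E^{(i+1)}\x_{F^{(i)}}T$. The remaining assertions are then bookkeeping: $\pi^{(i)}\tau_i=\tau_{i-1}$ records that $t_i$ is a section over $T$; the maps $\vf^{(i+1)}_T$ and $F^{(i+1)}_T\to T$ are the base changes of $\vf^{(i+1)}$ and $\pi^{(i+1)}$ (both covering the common map to $F^{(i-1)}$); and $T^{(i)}$ is the scheme-theoretic image of $E^{(i+1)}_T$ under the blowup map $\vf^{(i+1)}_T$, because the exceptional divisor of a blowup along a regularly embedded center surjects onto that center with scheme-theoretic image equal to it. Finally, naturality of the bijection in $T$ follows from the same lemma applied to a morphism $T'\to T$.
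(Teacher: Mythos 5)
Your proposal is correct and follows essentially the same route as the paper: both arguments reduce everything, by induction along the tower, to the fact that blowing up along the image of a section of a smooth morphism commutes with arbitrary base change, together with the observation that the center is the scheme-theoretic image of the exceptional divisor. The only (immaterial) difference is how that key lemma is justified\emdash you invoke regular immersions and Tor-independence of the Rees algebra, whereas the paper checks directly that each $\mathcal I^m/\mathcal I^{m+1}$ is locally free over the center and hence flat over the base, so that the graded pieces of $\bigoplus_m\mathcal I^m$ base-change correctly before applying $\Proj$.
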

  \begin{proof}
  First, observe that, given a section of any smooth map $a\:A\to B$,
 blowing up $A$ along the section's image, $C$ say, commutes with
 changing the base $B$.  Indeed, let $\mc I$ be the ideal of $C$, and for
 each $m\ge0$, consider the exact sequence
    $$0\to \mc I^{m+1}\to \mc I^m \to \mc I^m \big/\mc I^{m+1}\to 0.$$
  Since $a$ is smooth, $\mc I^m \big/\mc I^{m+1}$ is a locally free $\mc
 O_C$-module, so $B$-flat.  Hence forming the sequence commutes with
 changing $B$.  However, the blowup of $A$ is just $\Proj \bigoplus_m\mc I^m$.
 Hence forming it commutes too.

 Second, observe in addition that $C$ is the scheme-theoretic image of the
 exceptional divisor, $E$ say, of this blowup.  Indeed, this image is the
 closed subscheme of $C$ whose ideal is the kernel of the comorphism of
 the map $E\to C$.  However, this comorphism is an isomorphism, because
 $E= \bb P(I/I^2)$ since $a$ is smooth.

 The first observation implies that the sequences
 $(t_0,\dots,t_n)$ form a functor, because, given any $Y$-map $T'\to
 T$, each induced map
         $$F^{(i+1)}_T\x_TT'\to F^{(i)}_T\x_TT'$$
  is therefore the blowing-up along the image of the induced section of
 $F^{(i)}_T\x_TT'\big/T'$.

 To prove this functor is representable by $F^{(n)}/Y$, we must set up a
 functorial bijection between the sequences $(t_0,\dots,t_n)$ and the
 $Y$-maps $\tau_{n}\:T\to F^{(n)}$.  Of course, $n$ is arbitrary.  So
 $(t_0,\dots,t_i)$ then determines a $Y$-map $\tau_{i}\:T\to F^{(i)}$,
 and correspondingly we want the remaining assertions of the proposition
 to hold as well.

 So given $(t_0,\dots,t_n)$, let us construct appropriate $Y$-maps
 $\tau_{i}\:T\to F^{(i)}$ for $-1\le i\le n$.  We proceed by induction on
 $i$.  Necessarily, $\tau_{-1}\:T\to Y$ is the structure map, and
 correspondingly, $F^{(0)}_T=F^{(0)}\x_{F^{(-1)}}T$ owing to the
 definitions.

 Suppose we've constructed $\tau_{i-1}$.  Then
 $F^{(i)}_T=F^{(i)}\x_{F^{(i-1)}}T$.  Set $\tau_{i}:=p_1t_{i}$ where
 $p_1\:F^{(i)}_T\to F^{(i)}$ is the projection.  Then
 $\tau_{i-1}=\pi^{(i)}\tau_{i}$.  Also, $t_{i}=(\tau_{i},1)$; so $t_{i}$
 is the pullback, under the map $(1,\tau_{i})$, of the diagonal map of
 $F^{(i)}/F^{(i-1)}$.  Therefore, owing
 to the first observation, $F^{(i+1)}_T=F^{(i+1)}\x_{(F^{(i)}\x
 F^{(i)})}F^{(i)}_T$ where $F^{(i)}_T\to F^{(i)}\x_{F^{(i-1)}} F^{(i)}$
 is equal to $1\x\tau_i$.  Hence $F^{(i+1)}_T=F^{(i+1)}\x_{F^{(i)}}T$
 where $F^{(i+1)}\to {F^{(i)}}$ is $\pi^{(i+1)}$.  It follows formally
 that $E^{(i+1)}_T=E^{(i+1)}\x_{F^{(i)}} T$, that $F^{(i+1)}_T\to
 F^{(i)}_T$ is induced by $\vf^{(i+1)}$, and that $F^{(i+1)}_T\to T$ is
 induced by $\pi^{(i+1)}$.

 By the second observation above, $T^{(i)}$ is the scheme-theoretic
 image of $E^{(i+1)}_T$.

 Conversely, given a map $\tau_{n}\:T\to F^{(n)}$, set
 $\tau_{i-1}:=\pi^{(i)}\dotsm\pi^{(n)}\tau_{n}$ for $0\le i\le n$; so
 $\tau_{i-1}\:T\to F^{(i-1)}$.  Set $F^{(i)}_T:=F^{(i)}\x_{F^{(i-1)}}T$
 where the map $F^{(i)}\to {F^{(i-1)}}$ is $\pi^{(i)}$ for $0\le i\le
 n+1$.  Then $\tau_{i}$ defines a section $t_{i}$ of $F^{(i)}_T/T$.
 Furthermore, blowing up its image yields the map $F^{(i+1)}_T\to
 F^{(i)}_T$ induced by $\vf^{(i+1)}$, because, as noted above, forming
 the blowup along $\Delta^{(i)}$ commutes with changing the base via
 $1\x\tau_{i}$.  Thus $(t_0,\dots,t_n)$ is a sequence of arbitrarily near
 $T$-points of $F/Y$.

 Plainly, for each $T$, we have set up the bijection we sought, and it is
 functorial in $T$.  Since we have checked all the remaining assertions of
 the proposition, the proof is now complete.
  \end{proof}

 \begin{dfn}\label{dfn:sseq}
  Given a sequence $(t_0,\dots,t_n)$ of arbitrarily near $T$-points of
 $F/Y$, let us call it {\it strict\/} if, for $0\le i\le n$, the image
 $T^{(i)}$ of $t_i$ satisfies the following $i$ conditions, defined by
 induction on $i$.  There are, of course, no conditions on $T^{(0)}$.
 Fix $i$, and suppose, for $0\le j<i$, the conditions on $T^{(j)}$ are
 defined and satisfied.

 The $i$ conditions on $T^{(i)}$ involve the natural embeddings
   $$e^{(j,i)}_T\:E^{(j)}_T\into F^{(i)}_T \text{\quad for } 1\le j\le i,$$
 which we assume defined by induction; see the next paragraph.  (The
 image $e^{(j,i)}_TE^{(j)}_T$ can be regarded as the ``strict
 transform'' of  $E^{(j)}_T$ on $F^{(i)}_T$.) 
 The $j$th
 condition requires  $e^{(j,i)}_TE^{(j)}_T$ either (a) to be
 disjoint from $T^{(i)}$ or (b) to contain $T^{(i)}$ as a subscheme.

 Define $e^{(i+1,\,i+1)}_T$ to be the inclusion.  Now, for $1\le j\le i$,
 we have assumed that $e^{(j,i)}_T$ is defined, and required that its
 image satisfy either (a) or (b).  If (a) is satisfied, then the
 blowing-up $F^{(i+1)}_T\to F^{(i)}_T$ is an isomorphism on a
 neighborhood of $e^{(j,\,i)}_TE^{(j)}_T$, namely, the complement of
 $T^{(i)}$; so then $e^{(j,\,i)}_T$ lifts naturally to an embedding
 $e^{(j,\,i+1)}_T$.  If (b) is satisfied, then $T^{(i)}$ is a relative
 effective divisor on the $T$-scheme $e^{(j,i)}E^{(j)}_T$, because
 $E^{(j)}_T$ and $T^{(i)}$ are flat over $T$, and the latter's fibers are
 effective divisors on the former's fibers, which are $\bb P^1$s; hence,
 then blowing up $e^{(j,i)}_TE^{(j)}_T$ along $T^{(i)}$ yields an
 isomorphism.  But the blowup of $e^{(j,i)}_TE^{(j)}_T$ embeds naturally
 in $F^{(i)}_T$.  Thus, again, $e^{(j,\,i)}_T$ lifts naturally.
  \end{dfn}

 \begin{dfn}\label{dfn:prox}
  Given a strict sequence $(t_0,\dots,t_n)$ of arbitrarily near
 $T$-points of $F/Y$,  say that $t_i$ is {\it proximate\/} to $t_j$
 if $j<i$ and $e^{(j+1,\,i)}E^{(j+1)}_T$ contains $T^{(i)}$.
   \end{dfn}

 \begin{lem}\label{lem-Z}
  Let $(t_0,\dots,t_n)$ be a strict sequence of arbitrarily near
 $T$-points of $F/Y$.  Fix $n+1\ge i\ge j\ge k\ge 1$.  Then
 $\vf^{(j+1)}_T\dotsb\vf^{(i)}_Te^{(k,i)}_T=e^{(k,j)}_T$, and $T^{(j-1)}$
 is the scheme-theoretic image of $e^{(j,i)}_TE^{(j)}_T$ under
 $\vf^{(j)}_T\dotsb\vf^{(i)}_T$.  Set
         $$Z^{(i)}_T:=e^{(k,i)}_TE^{(k)}_T\bigcap e^{(j,i)}_TE^{(j)}_T.$$
 If $j> k$ and $Z^{(i)}_T\neq\emptyset$, then
 $\vf^{(j)}_T\dotsb\vf^{(i)}_T$ induces an isomorphism $Z^{(i)}_T\risom
 T^{(j-1)}$, and $t_{j-1}$ is proximate to $t_{k-1}$; moreover, then
 $Z^{(i)}_T$ meets no $e^{(l,i)}_TE^{(l)}_T$ for $l\neq j,k$.
  \end{lem}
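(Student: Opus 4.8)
The plan is to dispatch the two universal assertions first by a downward induction, then to deduce the proximity relation, then the isomorphism, and finally the disjointness; the heart of the matter will be the isomorphism, which I reduce to the classical geometry of the blown‑up surface on the geometric fibres over $T$. First I would prove $\vf^{(j+1)}_T\dotsb\vf^{(i)}_Te^{(k,i)}_T=e^{(k,j)}_T$ by induction on $i\ge j$; the base case $i=j$ is the empty composite, and the step reduces to the single relation $\vf^{(i)}_Te^{(k,i)}_T=e^{(k,i-1)}_T$. This last relation is immediate from the construction of the lift in Definition~\ref{dfn:sseq}: in case~(a) the map $\vf^{(i)}_T$ is an isomorphism near $e^{(k,i-1)}_TE^{(k)}_T$ and the lift is the tautological one, while in case~(b) the scheme $e^{(k,i)}_TE^{(k)}_T$ is the blowup of $e^{(k,i-1)}_TE^{(k)}_T$ along the relative divisor $T^{(i-1)}$, so $\vf^{(i)}_T$ restricts to the blowdown, an isomorphism. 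Thus $\vf^{(j+1)}_T\dotsb\vf^{(i)}_T$ restricts to an isomorphism $e^{(k,i)}_TE^{(k)}_T\risom e^{(k,j)}_TE^{(k)}_T$ for every $k\le j$. Taking $k=j$ and composing with $\vf^{(j)}_T$, whose scheme-theoretic image on $E^{(j)}_T$ is $T^{(j-1)}$ by Proposition~\ref{prp-Har}, yields the second universal assertion, since precomposing with an isomorphism does not alter the scheme-theoretic image.

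Next I would establish $t_{j-1}\succ t_{k-1}$ when $j>k$ and $Z^{(i)}_T\ne\emptyset$. Write $\Phi:=\vf^{(j)}_T\dotsb\vf^{(i)}_T$. By the two assertions just proved, $\Phi$ restricts to an isomorphism of $e^{(k,i)}_TE^{(k)}_T$ onto $e^{(k,j-1)}_TE^{(k)}_T$ and carries $e^{(j,i)}_TE^{(j)}_T$ onto $T^{(j-1)}$ via the $\bb P^1$-bundle projection $E^{(j)}_T\to T^{(j-1)}$ of Lemma~\ref{lem:DerFam}. Hence $\Phi$ sends the nonempty $Z^{(i)}_T$, which lies in both, into $T^{(j-1)}\cap e^{(k,j-1)}_TE^{(k)}_T$, and this intersection is nonempty because $\Phi$ is injective on $e^{(k,i)}_TE^{(k)}_T$. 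By strictness the center $T^{(j-1)}$ is either disjoint from or contained in $e^{(k,j-1)}_TE^{(k)}_T$, so it must be contained, which is exactly the assertion $t_{j-1}\succ t_{k-1}$ of Definition~\ref{dfn:prox}.

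For the isomorphism $Z^{(i)}_T\risom T^{(j-1)}$ I would descend one blowup at a time, proving $\vf^{(m)}_T\:Z^{(m)}_T\risom Z^{(m-1)}_T$ for $i\ge m>j$, where $Z^{(m)}_T:=e^{(j,m)}_TE^{(j)}_T\cap e^{(k,m)}_TE^{(k)}_T$, and closing with $\vf^{(j)}_T\:Z^{(j)}_T\risom T^{(j-1)}$. Writing $A:=e^{(j,m-1)}_TE^{(j)}_T$ and $B:=e^{(k,m-1)}_TE^{(k)}_T$, strictness places $T^{(m-1)}$ in one of four positions relative to $(A,B)$. In the three positions where $T^{(m-1)}$ is disjoint from $A$ or from $B$ it is disjoint from $Z^{(m-1)}_T=A\cap B$, so $\vf^{(m)}_T$ is an isomorphism over a neighborhood of $Z^{(m-1)}_T$; since every point of $Z^{(m)}_T$ lies over $Z^{(m-1)}_T$ and the strict transforms coincide with $A,B$ there, this gives $Z^{(m)}_T\risom Z^{(m-1)}_T$. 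The remaining position $T^{(m-1)}\subseteq A\cap B$ is excluded by $Z^{(i)}_T\ne\emptyset$: there $T^{(m-1)}$, being a section, fills out the section $Z^{(m-1)}_T$ (using the inductive isomorphism $Z^{(m-1)}_T\risom T^{(j-1)}\cong T$), so on each fibre $A$ and $B$ are transverse exceptional strict transforms meeting only in that one point; blowing it up separates them, forcing $Z^{(m)}_T=\emptyset$ and hence $Z^{(i)}_T=\emptyset$, a contradiction. For the base step, $\vf^{(j)}_T$ is the blowup of the $T$-smooth center $T^{(j-1)}$ with exceptional $\bb P^1$-bundle $E^{(j)}_T$; by the proximity just proved, $T^{(j-1)}$ is (by strictness) a relative effective divisor on the $T$-smooth divisor $e^{(k,j-1)}_TE^{(k)}_T$, so its strict transform meets $E^{(j)}_T$ in the section cut out by the conormal direction, whence $Z^{(j)}_T\risom T^{(j-1)}$.

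Finally, for the disjointness I would argue fibrewise: a geometric point of $Z^{(i)}_T$ lies on the strict transforms of both $E^{(k)}_T$ and $E^{(j)}_T$, and if it also lay on $e^{(l,i)}_TE^{(l)}_T$ for some $l\ne j,k$, then three strict transforms of exceptional divisors would pass through one point of the blown-up surface fibre, which is impossible (cf.\ Subsection~\ref{sb:curve}). As $Z^{(i)}_T\cap e^{(l,i)}_TE^{(l)}_T$ is a closed subscheme, its emptiness may be tested on geometric points, so it is empty. The main obstacle is the third paragraph: getting the isomorphism on the nose rather than merely set-theoretically. This is what forces the blowup-by-blowup bookkeeping and, crucially, the use of the nonemptiness hypothesis together with fibrewise transversality to eliminate the position $T^{(m-1)}\subseteq A\cap B$, in which the two strict transforms would be pulled apart.
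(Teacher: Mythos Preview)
Your argument is essentially the paper's, and it is correct in substance; the two unconditional assertions, the proximity conclusion, and the step-by-step comparison of the $Z^{(m)}_T$ are exactly how the paper proceeds. There is, however, one ordering slip in your third paragraph: you run the induction \emph{downward} from $i$, yet in the ``fourth position'' $T^{(m-1)}\subseteq A\cap B$ you invoke the isomorphism $Z^{(m-1)}_T\risom T^{(j-1)}\cong T$, which in a descending scheme has not yet been established (nor has the base $Z^{(j)}_T\risom T^{(j-1)}$). The fix is painless: either run the induction \emph{upward}, proving the base $Z^{(j)}_T\risom T^{(j-1)}$ first and then $Z^{(m+1)}_T\risom Z^{(m)}_T$ for $m=j,\dotsc,i-1$, so that $Z^{(m)}_T\cong T$ is genuinely available at the inductive step; or else drop the appeal to the inductive isomorphism and justify $T^{(m-1)}=Z^{(m-1)}_T$ directly, as the paper does, by observing that $Z^{(m-1)}_T$ is a relative effective divisor on the $\bb P^1$-bundle $e^{(k,m-1)}_TE^{(k)}_T$ with reduced-point fibres, hence $T$-flat with the same fibres as $T^{(m-1)}$. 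Your fibrewise ``no three strict transforms meet'' argument for the final disjointness is a legitimate shortcut over the paper's scheme-theoretic version.
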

 \begin{proof}
 The formula $\vf^{(j+1)}_T\dotsb\vf^{(i)}_Te^{(k,i)}_T=e^{(k,j)}_T$ is
 trivial if $i=j$.  It holds by construction if $i=j+1$.  Finally, it
 follows by induction if $i>j+1$.  With $k:=j$, this formula implies that
 $E^{(j)}_T$ is the scheme-theoretic image of $e^{(j,i)}_TE^{(j)}_T$
 under $\vf^{(j+1)}_T\dotsb\vf^{(i)}_T$; whence,
 Proposition~\ref{prp-Har} implies that $T^{(j-1)}$ is the
 scheme-theoretic image of $e^{(j,i)}_TE^{(j)}_T$ under
 $\vf^{(j)}_T\dotsb\vf^{(i)}_T$.

 Suppose $j> k$ and $Z^{(i)}_T\neq\emptyset$.  Now, for any $l$ such that
 $i\ge l\ge j$, both $e^{(k,l)}_TE^{(k)}_T$ and $e^{(j,l)}_TE^{(j)}_T$
 are relative effective divisors on $F^{(l)}_T/T$, because they're flat
 and divisors on the fibers.  Hence, on either of $e^{(k,l)}_TE^{(k)}_T$
 and $e^{(j,l)}_TE^{(j)}_T$, their intersection $Z^{(l)}_T$ is a relative
 effective divisor, since each fiber of $Z^{(l)}_T$ is correspondingly a
 divisor.  In fact, each nonempty fiber of $Z^{(l)}_T$ is a reduced point
 on a $\bb P^1$.

 Since $\vf^{(j+1)}_T\dotsb\vf^{(i)}_Te^{(j,i)}_T=e^{(j,j)}_T$ and since
 $e^{(j,j)}_T$ is the inclusion of $E^{(j)}_T$, which is the exceptional
 divisor of the blowing-up $\vf^{(j)}\:F^{(j)}\to F^{(j-1)}$ along
 $T^{(j-1)}$, the map $\vf^{(j)}_T\dotsb\vf^{(i)}_T$ induces a proper map
 $e\:Z^{(i)}_T\to T^{(j-1)}$.  Since the fibers of $e$ are isomorphisms,
 $e$ is a closed embedding.  So since $Z^{(i)}_T$ and $T^{(j-1)}$ are
 $T$-flat, $e$ is an isomorphism onto an open and closed subscheme.

 Since $\vf^{(j)}_T\dotsb\vf^{(i)}_Te^{(k,i)}_T=e^{(k,\,j-1)}_T$, it
 follows that $e^{(k,\,j-1)}_TE^{(k)}_T$ contains a non\-empty subscheme of
 $T^{(j-1)}$.  So since $(t_0,\dots,t_n)$ is strict,
 $e^{(k,\,j-1)}_TE^{(k)}_T$ contains all of $T^{(j-1)}$ as a subscheme.
 Thus $t_{j-1}$ is proximate to $t_{k-1}$.

 It follows that $\vf^{(j)}_T$ induces a surjection $Z^{(j)}_T\onto
 T^{(j-1)}$.  If $i=j$, then this surjection is just $e$, and so $e$ is
 an isomorphism, as desired.

 Suppose $i>j$.  Then $Z^{(j)}_T\bigcap T^{(j)}=\emptyset$.  Indeed,
 suppose not.  Then both $e^{(k,j)}_TE^{(k)}_T$ and $E^{(j)}_T$ meet
 $T^{(j)}$.  So since $(t_0,\dots,t_n)$ is strict, $Z^{(j)}_T$ contains
 $T^{(j)}$ as a closed subscheme.  Both these schemes are $T$-flat, and
 their fibers are reduced points; hence, they coincide.  It follows that
 $e^{(k,\,j+1)}_TE^{(k)}_T$ and $e^{(j,\,j+1)}_TE^{(j)}_T$ are disjoint
 on $ F^{(j+1)}$.  But these subschemes intersect in $Z^{(j+1)}_T$.  And
 $Z^{(j+1)}_T\neq\emptyset$ since $Z^{(i)}_T\neq\emptyset$ and
 $Z^{(i)}_T$ maps into $Z^{(j+1)}_T$.  We have a contradiction, so
 $Z^{(j)}_T\bigcap T^{(j)}=\emptyset$.

 Therefore, $\vf^{(j+1)}_T$ induces an isomorphism $Z^{(j+1)}_T\risom
 Z^{(j)}_T$.  Similarly, $\vf^{(l+1)}_T$ induces an isomorphism
 $Z^{(l+1)}_T\risom Z^{(l)}_T$ for $l=j,\dotsc,i-1$.  Hence
 $\vf^{(j)}_T\dotsb\vf^{(i)}_T$ induces an isomorphism $Z^{(i)}_T\risom
 T^{(j-1)}$.

 Finally, suppose $Z^{(i)}_T$ meets $e^{(l,i)}_TE^{(l)}_T$ for $l\neq
 j,k$, and let's find a contradiction.  If $l<j$, then interchange $l$
 and $j$.  Then, by the above, $T^{(j-1)}$ lies in both
 $e^{(k,\,j-1)}_TE^{(k)}_T$ and $e^{(l,\,j-1)}_TE^{(l)}_T$.  Therefore,
 $T^{(j-1)}$ is equal to their intersection, because $T^{(j-1)}$ is flat
 and its fibers are equal to those of the intersection.  It follows that
 $e^{(k,\,j)}_TE^{(k)}_T$ and $e^{(l,\,j)}_TE^{(l)}_T$ are disjoint on $
 F^{(j)}$.  But both these subschemes contain the image of $Z^{(i)}_T$,
 which is nonempty.  We have a contradiction, as desired.  The proof is
 now complete.
  \end{proof}

 \begin{prp}\label{prp-assdiag}
 Let $(t_0,\dots,t_n)$ be a strict sequence of arbitrarily near $T$-points
 of $F/Y$.  Equip the abstract ordered set of $t_i$ with the relation of
 proximity of\/ {\rm Definition~\ref{dfn:prox}}.  Then this set becomes
 an ordered proximity structure.
  \end{prp}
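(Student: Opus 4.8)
The plan is to number each vertex $t_i$ by its index $i$ and to check the ordering condition together with Laws (P1)--(P3) of Subsection~\ref{sbs-ps}. The ordering condition and (P1) are immediate from Definition~\ref{dfn:prox}: since $t_i\succ t_j$ forces $j<i$, no vertex is proximate to itself, no two vertices are mutually proximate, and $i>j$ whenever $t_i\succ t_j$. All of the substance therefore lies in (P2) and (P3), and both follow by feeding the proximity hypotheses into Lemma~\ref{lem-Z}.

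For (P2), suppose $t_a\succ t_b$ and $t_a\succ t_c$ with, say, $b<c<a$. By Definition~\ref{dfn:prox}, $T^{(a)}$ lies in both $e^{(b+1,a)}_TE^{(b+1)}_T$ and $e^{(c+1,a)}_TE^{(c+1)}_T$, hence in their intersection $Z^{(a)}_T$, which is therefore nonempty. Applying Lemma~\ref{lem-Z} with $(k,j,i)=(b+1,c+1,a)$ then gives that $t_c$ is proximate to $t_b$, which is exactly the assertion that one of the two is proximate to the other. To see that $t_a$ is proximate to at most two vertices, I would suppose in addition that $t_a\succ t_d$ with $d$ distinct from $b$ and $c$; the ``moreover'' clause of Lemma~\ref{lem-Z} says $Z^{(a)}_T$ meets no $e^{(l,a)}_TE^{(l)}_T$ with $l\neq b+1,c+1$, in particular not $e^{(d+1,a)}_TE^{(d+1)}_T$, yet $T^{(a)}$ is a nonempty subscheme of both $Z^{(a)}_T$ and $e^{(d+1,a)}_TE^{(d+1)}_T$, a contradiction.

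For (P3), suppose two distinct vertices $t_p$ and $t_q$, each different from $t_b$ and $t_c$, are both proximate to $t_b$ and to $t_c$; take $b<c$ and $p<q$, noting $c<p$ since $t_p\succ t_c$. The isomorphism part of Lemma~\ref{lem-Z}, applied with $(k,j,i)=(b+1,c+1,p)$, gives $Z^{(p)}_T\risom T^{(c)}$; since moreover $T^{(p)}\subseteq Z^{(p)}_T$ while both $T^{(p)}$ and $Z^{(p)}_T$ are isomorphic to $T$ over $T$, the closed immersion $T^{(p)}\hookrightarrow Z^{(p)}_T$ is an isomorphism, so $T^{(p)}=Z^{(p)}_T$. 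Thus the strict transforms $e^{(b+1,p)}_TE^{(b+1)}_T$ and $e^{(c+1,p)}_TE^{(c+1)}_T$ meet exactly along the center $T^{(p)}$ of the next blowup, transversally since the nonempty fibers of $Z^{(p)}_T$ are reduced points. Exactly as in the $i>j$ step in the proof of Lemma~\ref{lem-Z}, blowing up this transversal intersection separates the two strict transforms, so $Z^{(p+1)}_T=\emptyset$. Since the formula of Lemma~\ref{lem-Z} shows $\vf^{(p+2)}_T\dotsb\vf^{(q)}_T$ carries $Z^{(q)}_T$ into $Z^{(p+1)}_T$, we get $Z^{(q)}_T=\emptyset$; but $t_q\succ t_b$ and $t_q\succ t_c$ force the nonempty $T^{(q)}$ into $Z^{(q)}_T$, a contradiction.

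The main obstacle is (P3): the key is to recognize that proximity to two vertices pins $t_p$ down to the transversal intersection $Z^{(p)}_T$, so that the equality $T^{(p)}=Z^{(p)}_T$ identifies this intersection with the blowup center, after which the separation of the strict transforms is precisely the vanishing $Z^{(p+1)}_T=\emptyset$ that excludes a second point proximate to the same pair. The remaining verifications are routine once Lemma~\ref{lem-Z} is in hand, and by Corollary~\ref{cor-ps} the resulting ordered proximity structure corresponds to an ordered unweighted Enriques diagram.
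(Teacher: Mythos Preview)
Your proof is correct and follows essentially the same approach as the paper's: (P1) is trivial, and (P2) is handled identically by feeding the proximity inclusions into Lemma~\ref{lem-Z}. For (P3) your argument is a minor variant of the paper's. The paper shows $T^{(j)}=Z^{(j)}$ (your $T^{(p)}=Z^{(p)}_T$) and then derives the contradiction by observing that the nonempty image $\vf^{(j+2)}_T\dotsb\vf^{(i)}_TT^{(i)}$ lands in $Z^{(j+1)}\subset E^{(j+1)}_T$, violating the ``moreover'' clause of Lemma~\ref{lem-Z}; you instead go straight to $Z^{(p+1)}_T=\emptyset$ via the same separation-of-transversal-divisors step used inside the proof of Lemma~\ref{lem-Z}, which is slightly more economical since you need only the one equality $T^{(p)}=Z^{(p)}_T$ rather than both.
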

 \begin{proof}
 Law (P1) holds trivially.  

 As to (P2), suppose $t_i$ is proximate to $t_j$ and to $t_k$ with $j>k$.
 Then $T^{(i)}$ lies in $e^{(k+1,i)}_TE^{(k+1)}_T \bigcap
 e^{(j+1,i)}_TE^{(j+1)}_T$.  So Lemma~\ref{lem-Z} implies $t_j$ is
 proximate to $t_k$.  Furthermore, the lemma implies the intersection
 meets no $e^{(l+1,i)}_TE^{(l+1)}_T$ for $l\neq j,k$.  So $t_i$ is
 proximate to no third vertex $t_l$.  Thus (P2) holds.

 As to (P3), suppose $t_i$ and $t_j$ are each proximate to both $t_k$ and
 $t_l$ where $i>j>k>l$.  Given $p>k$, set $Z^{(p)}:=
 e^{(l+1,p)}_TE^{(l+1)}_T \bigcap e^{(k+1,p)}_TE^{(k+1)}_T$.  Then
 $T^{(i)}\subseteq Z^{(i)}$.  Now, $Z^{(i)}$ is $T$-flat with
 reduced points as fibers by Lemma~\ref{lem-Z}.  But $T^{(i)}$ is a
 similar $T$-scheme.  Hence $T^{(i)}= Z^{(i)}$.  Similarly, $T^{(j)}:=
 Z^{(j)}$.

 Lemma~\ref{lem-Z} yields $\vf^{(j+1)}_T\dotsb\vf^{(i)}_Te^{(m,i)}_T
 =e^{(m,j)}_T$ for $m=k,l$.  So $\vf^{(j+1)}_T\dotsb\vf^{(i)}_T$ carries
 $T^{(i)}$ into $T^{(j)}$.  Now, this map is proper, and both $T^{(i)}$
 and $T^{(j)}$ are $T$-flat with reduced points as fibers; hence, $T^{(i)}
 \risom T^{(j)}$.  It follows that
  $$
         \vf^{(j+2)}_T\dotsb\vf^{(i)}_TT^{(i)} \subseteq
   Z^{(j+1)}\subset \bigl(\vf^{(j+1)}_T\bigr)^{-1}T^{(j)}=E^{(j+1)}_T.
  $$
 Hence $Z^{(j+1)}$ meets $E^{(j+1)}_T$, contrary to Lemma~\ref{lem-Z}.
 Thus (P3) holds.
  \end{proof}

 \begin{dfn}\label{dfn:assdiag}
 Let's say that a strict sequence of arbitrarily near $T$-points of $F/Y$
 has {\it diagram} $(\I U,\,\theta)$ if $(\I U,\,\theta)$ is isomorphic
 to the ordered unweighted Enriques diagram coming from
 Propositions~\ref{prp-assdiag} and \ref{prp-ps}.
  \end{dfn}

 The following result was inspired by Ro\'e's Proposition 2.6 in
 \cite{Roe01}.

 \begin{thm}\label{thm:3-2} Fix an ordered unweighted Enriques diagram
 $(\I U,\,\theta)$ on $n+1$ vertices.  Then the strict sequences of
 arbitrarily near $T$-points of $F/Y$ with diagram $(\I U,\,\theta)$ form
 a functor; it is representable by a subscheme $F(\I U,\,\theta)$
 of $F^{(n)}$, which is $Y$-smooth with irreducible geometric fibers of
 dimension $\dim(\I U)$.
  \end{thm}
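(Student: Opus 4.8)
The plan is to induct on $n$, removing the top vertex. Since $\theta$ is an ordering, the vertex $V:=\theta^{-1}(n)$ is a leaf to which no vertex is proximate, so deleting it leaves an ordered unweighted Enriques diagram $(\I U',\,\theta')$ on $n$ vertices; by induction $F(\I U',\,\theta')$ is a subscheme of $F^{(n-1)}$ that is $Y$-smooth with irreducible geometric fibers of dimension $\dim(\I U')$ (the base case $n=0$ being $F(\I U,\,\theta)=F$, handled by Lemma~\ref{lem:DerFam}). Set $S:=F(\I U',\,\theta')$; over it live the universal strict sequence $(t_0,\dots,t_{n-1})$ together with its blowups $F^{(i)}_S$, exceptional divisors $E^{(i)}_S$, and strict transforms $e^{(j,i)}_SE^{(j)}_S$. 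By Proposition~\ref{prp-Har}, $W:=F^{(n)}\x_{F^{(n-1)}}S$ equals $F^{(n)}_S$, is a subscheme of $F^{(n)}$, and carries the universal choice of the last point $t_n$; moreover $W\to S$ is smooth with irreducible geometric fibers of dimension $2$ by Lemma~\ref{lem:DerFam}. I would then define $F(\I U,\,\theta)\subseteq W$ to be the locus where, for each $j<n$, the section $t_n$ lies on $e^{(j+1,n)}_SE^{(j+1)}_S$ if $V\succ\theta^{-1}(j)$ in $\I U$ and is disjoint from it otherwise. Everything reduces to showing that $F(\I U,\,\theta)\to S$ is smooth and surjective with irreducible geometric fibers of dimension $\type(V)$: composing with $S\to Y$ then gives a smooth $Y$-scheme, and over a geometric point of $Y$ the fiber maps to the irreducible corresponding fiber of $S$ by a smooth surjection with irreducible fibers, and is therefore connected, hence, being smooth over a field, irreducible, of dimension $\dim(\I U')+\type(V)=\dim(\I U)$ by (\ref{eq:2-1}).

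For representability, note that each condition ``$t_n$ lies on $e^{(j+1,n)}_SE^{(j+1)}_S$'' is closed and each condition ``$t_n$ is disjoint from $e^{(j+1,n)}_SE^{(j+1)}_S$'' is open, since these strict transforms are relative effective divisors on $F^{(n)}_S/S$ (as in the proof of Lemma~\ref{lem-Z}); so $F(\I U,\,\theta)$ is a subscheme of $W$, hence of $F^{(n)}$. A $T$-point of it is a strict sequence $(t_0,\dots,t_{n-1})$ of diagram $(\I U',\,\theta')$ together with a point $t_n$ for which every $e^{(j+1,n)}_TE^{(j+1)}_T$ either contains or is disjoint from $T^{(n)}$, with the alternatives prescribed by $\I U$. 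By Definitions~\ref{dfn:sseq} and \ref{dfn:prox} and Proposition~\ref{prp-assdiag}, this is precisely a strict sequence $(t_0,\dots,t_n)$ of diagram $(\I U,\,\theta)$; conversely, any such truncates to one over $S$. This identifies the functor with $\Hom_Y(-,F(\I U,\,\theta))$.

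The heart is the fiberwise analysis, which splits according to $\type(V)$ and leans throughout on Lemma~\ref{lem-Z}. If $V$ is a root, it is proximate to nothing, so $F(\I U,\,\theta)$ is the complement in $W$ of the finitely many divisors $e^{(j+1,n)}_SE^{(j+1)}_S$; being open in $W$, it is smooth over $S$ of relative dimension $2=\type(V)$ with irreducible nonempty fibers (a surface minus finitely many curves). If $V$ is free but not a root, it is proximate to exactly its immediate predecessor $\theta^{-1}(j)$, and $F(\I U,\,\theta)$ is the open locus in $D:=e^{(j+1,n)}_SE^{(j+1)}_S$ avoiding the other strict transforms. Here, at each stage the center either lies on the strict transform of $E^{(j+1)}_S$ as a relative Cartier divisor, along which blowing up is an isomorphism by case (b) of Definition~\ref{dfn:sseq}, or is disjoint from it; either way the strict transform is unchanged, so $D\to S$ is a $\bb P^1$-bundle, smooth of relative dimension $1=\type(V)$. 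Deleting the loci $D\cap e^{(l+1,n)}_SE^{(l+1)}_S$, each finite on every fiber by Lemma~\ref{lem-Z}, leaves an open subscheme with irreducible $1$-dimensional fibers. If $V$ is a satellite, it is proximate to its immediate predecessor $\theta^{-1}(j)$ and to a remote predecessor $\theta^{-1}(k)$ with $j>k$, and $F(\I U,\,\theta)=e^{(j+1,n)}_SE^{(j+1)}_S\cap e^{(k+1,n)}_SE^{(k+1)}_S$; Lemma~\ref{lem-Z} shows that where this is nonempty it maps isomorphically onto the section $T^{(j)}\cong S$ and automatically avoids the remaining strict transforms, so there $F(\I U,\,\theta)\to S$ is an isomorphism, of relative dimension $0=\type(V)$.

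I expect the satellite case to be the main obstacle, precisely because Lemma~\ref{lem-Z} describes that intersection only once it is known to be nonempty, whereas I need nonemptiness over \emph{all} of $S$ for surjectivity. To get it, I would argue directly: the Law of Proximity forces $\theta^{-1}(j)\succ\theta^{-1}(k)$, a relation already recorded in $\I U'$, so over $S$ the strict transform $e^{(k+1,\,j+1)}_SE^{(k+1)}_S$ meets the exceptional divisor $E^{(j+1)}_S$ in a section. No intermediate center $T^{(l)}$ with $j<l<n$ can lie on both strict transforms, since by Law (P3) at most one vertex is proximate to both $\theta^{-1}(j)$ and $\theta^{-1}(k)$, namely $V$ itself; hence each such $T^{(l)}$ is disjoint from the running intersection, no intermediate blowup separates the two strict transforms, and the section persists up to level $n$. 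Assembling the three cases with the inductive hypothesis and the composition argument of the first paragraph then finishes the proof.
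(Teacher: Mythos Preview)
Your proof is correct and follows essentially the same route as the paper: induct on $n$ by deleting the leaf $\theta^{-1}(n)$, identify the representing subscheme inside the pullback of $F^{(n)}$ over $F(\I U',\theta')$, and split into three cases by $\type(V)$, invoking Lemma~\ref{lem-Z} for the intersections of strict transforms. Your handling of the satellite case is in fact more careful than the paper's: you correctly note that Lemma~\ref{lem-Z} only describes the intersection $Z$ once it is known to be nonempty, and you supply the missing step by using the Law of Proximity (so $\theta^{-1}(j)\succ\theta^{-1}(k)$ already in $\I U'$, giving $Z^{(j+1)}\neq\emptyset$) together with (P3) (so no intermediate center separates the two strict transforms). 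The paper simply cites Lemma~\ref{lem-Z} and the strictness of the universal sequence, leaving this persistence argument implicit.
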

  \begin{proof}
  If a strict sequence of arbitrarily near $T$-points has diagram $(\I
 U,\,\theta)$, then, for any map $T'\to T$, the induced sequence of
 arbitrarily near $T'$-points plainly also has diagram $(\I U,\,\theta)$.
 So the sequences with diagram $(\I U,\,\theta)$ form a subfunctor of the
 functor of all sequences, which is representable by $F^{(n)}/Y$ by
 Proposition~\ref{prp-Har}.

 Suppose $n=0$.  Then $\I U$ has just one vertex.  So the two functors
 coincide, and both are representable by $F$, which is $Y$-smooth with
 irreducible geometric fibers of dimension 2.  However, $2=\dim(\I U)$.
 Thus the theorem holds when $n=0$.

 Suppose $n\ge1$.  Set $L:=\theta^{-1}n$.  Then $L$ is a leaf.  Set $\I
 T:=\I U-L$.  Then $\I T$ inherits the structure of an unweighted
 Enriques diagram, and it is ordered by the restriction $\theta|\I T$.
 By induction on $n$, assume the theorem holds for $(\I T,\, \theta|\I T)$.

   Set $G:=F(\I T,\, \theta|\I T)\subset F^{(n-1)}$ and
 $H:=\pi_n^{-1}G\subset F^{(n)}$.  Then $H$ represents the functor of
 sequences $(t_0,\dots,t_n)$ of arbitrarily near $T$-points such that
 $(t_0,\dots,t_{n-1})$ has diagram $(\I T,\, \theta|\I T)$ since
 $\pi^{(i)}\tau_{i}=\tau_{i-1}$ by Proposition~\ref{prp-Har}.  Moreover,
 $H$ is $G$-smooth with irreducible geometric fibers of dimension 2 by
 Lemma~\ref{lem:DerFam}.  And $G$ is $Y$-smooth with irreducible
 geometric fibers of dimension $\dim(\I T)$ as the theorem holds for $(\I
 T,\, \theta|\I T)$.  Thus $H$ is $Y$-smooth with irreducible geometric
 fibers of dimension $\dim(\I T)+2$.

 Let $(h_0,\dotsc,h_n)$ be the universal sequence of arbitrarily near
 $H$-points, and $H^{(i)}\subset F^{(i)}_H$ the image of $h_i$.  We must
 prove that $H$ has a largest subscheme $S$ over which
 $(h_0,\,\dots,\,h_n)$ restricts to a sequence with diagram $(\I
 U,\,\theta)$; we must also prove that $S$ is $Y$-smooth with irreducible
 geometric fibers of dimension $\dim(\I U)$.

 But, $(h_0,\,\dots,\,h_{n-1})$ has diagram $(\I T,\,\theta|\I T)$.  So
 $H^{(i)}$ satisfies the $i$ conditions of Definition~\ref{dfn:sseq} for
 $i=0,\dotsc, n-1$.  Hence $S$ is defined simply by the $n$ conditions on
 $H^{(n)}$: the $j$th requires $e^{(j,n)}_HE^{(j)}_H$ either (a) to be
 disjoint from $H^{(n)}$ or (b) to contain it as a subscheme; (b) applies
 if $L$ is proximate to $\theta^{-1}(j-1)$, and (a) if not, according to
 Definition~\ref{dfn:prox}.  Let $P$ be the set of $j$ for which (b)
 applies.  Set
  $$S:=h_n^{-1}\biggl(\bigcap_{j\in P}e^{(j,n)}_HE^{(j)}_H
              -\bigcup_{j\notin P}e^{(j,n)}_HE^{(j)}_H\biggr)$$
 Plainly, $S$ is  the desired largest subscheme of $H$.

 It remains to analyze the geometry of $S$.  First of all,
 $F^{(n)}_G=F^{(n)}\x_{F^{(n-1)}}G$ by Proposition~\ref{prp-Har}; so
 $F^{(n)}_G=H$ since $H:=\pi_n^{-1}G$.  Also,
 $F^{(n)}_H=F^{(n)}\x_{F^{(n-1)}}H$ and $h_n=(\zeta_n,1)$ where
 $\zeta_n\:H\into F$, again by Proposition~\ref{prp-Har}.  Hence
  $$F^{(n)}_H=F^{(n)}_G\x_GH=H\x_GH \text{ and } h_n=(1,1).$$
 Plainly, forming $e^{(j,n)}_T$ is functorial in $T$; whence,
 $e^{(j,n)}_HE^{(j)}_H = (e^{(j,n)}_GE^{(j)}_G)\x_GH$.  Hence,
 $h_n^{-1}e^{(j,n)}_HE^{(j)}_H=e^{(j,n)}_GE^{(j)}_G$.  Therefore,
  $$S:=\bigcap_{j\in P}e^{(j,n)}_GE^{(j)}_G -\bigcup_{j\notin
    P}e^{(j,n)}_GE^{(j)}_G.$$

 There are three cases to analyze, depending on $\type(L)$.  In any case,
         $$\dim(\I T)+\type(L) = \dim(\I U)$$
 owing to Formula~\ref{eq:2-1}.  Furthermore, each $e^{(j,n)}_G$ is an
 embedding.  So $e^{(j,n)}_GE^{(j)}_G$ has the form $\bb P(\Omega)$ for
 some locally free sheaf $\Omega$ of rank 2 on $G$ by
 Lemma~\ref{lem:DerFam} and Proposition~\ref{prp-Har}.  Hence
 $e^{(j,n)}_GE^{(j)}_G$ is $Y$-smooth with irreducible geometric fibers
 of dimension $\dim(\I T)+1$.

 Suppose $\type(L)=2$.  Then $L$ is a root.  So $P$ is empty, and by
 convention, the intersection $\bigcap_{j\in P}e^{(j,n)}_HE^{(j)}_H$ is
 all of $H$.  So
 $S$ is open in $H$, and maps onto $Y$.  Hence $S$ is $Y$-smooth with
 irreducible geometric fibers of dimension $\dim(H/Y)$, and
         $$\dim(H/Y)=\dim(\I T)+2=\dim(\I U).$$
 Thus the theorem holds in this case.

 Suppose $\type(L)=1$.  Then $L$ is a free vertex, but not a root.  So
 $L$ has an immediate predecessor, $M$ say.  Set $m:=\theta (M)$.  Then
 $P=\{m\}$.  So $S$ is open in $e^{(m,n)}_GE^{(m)}_G$, and maps onto $Y$.
 Hence $S$ is $Y$-smooth with irreducible geometric fibers of dimension
 $\dim(e^{(m,n)}_GE^{(m)}_G/Y)$, and
         $$\dim(e^{(m,n)}_GE^{(m)}_G/Y)=\dim(\I T)+1=\dim(\I U).$$
 Thus the theorem holds in this case too.

 Finally, suppose $\type(L)=0$.  Then $L$ is a satellite.  So $L$ is
 proximate to two vertices: an immediate predecessor, $M$ say, and a
 remote predecessor, $R$ say.  Set $m:=\theta (M)$ and $r:=\theta (R)$.
 Then $P=\{r,\,m\}$.  Set $Z:=e^{(r,n)}_GE^{(r)}_G\bigcap
 e^{(m,n)}_GE^{(m)}_G$.  Then $Z\risom G$ and $Z$ meets no
 $e^{(j,n)}_GE^{(m)}_G$ with $j\notin P$ owing to Lemma~\ref{lem-Z},
 because $(h_0,\,\dots,\,h_{n-1})$ is strict with diagram $(\I
 T,\,\theta|\I T)$.  Hence $S=Z$.  Therefore, $S$ is $Y$-smooth with
 irreducible geometric fibers of dimension $\dim(G/Y)$, and
         $$\dim(G/Y)=\dim(\I T)+0=\dim(\I U).$$
 Thus the theorem holds in this case too, and the proof is complete.
 \end{proof}

 \section{Isomorphism and enlargement}\label{scIae}
Fix a smooth family of geometrically irreducible surfaces $\pi\:F\to Y$.
In this section, we study the scheme $F(\I U,\,\theta)$ introduced in
Theorem~\ref{thm:3-2}.  First, we work out the effect of replacing the
ordering $\theta$ by another one $\theta'$.  Then we develop, in our
context, much of Ro\'e's Subsections 2.1--2.3 in \cite{Roe01};
specifically, we study a certain closed subset $E(\I U,\,\theta)\subset
F^{(n)}$ containing $F(\I U,\,\theta)$ set-theoretically.  Notably, we
prove that, if the sets $F(\I U',\,\theta')$ and $E(\I U,\,\theta)$
meet, then $E(\I U',\,\theta')$ lies in $E(\I U,\,\theta)$; furthermore,
$E(\I U',\,\theta')=E(\I U,\,\theta)$ if and only if $(\I U,\,\theta)
\cong (\I U',\,\theta')$.

Proposition~\ref{prpIso} below asserts that there is a natural
isomorphism $\Phi_{\theta,\theta'}$ from $F(\I U,\,\theta)$ to $F(\I
U,\,\theta')$.  On geometric points, $\Phi_{\theta,\theta'}$ is given as
follows.  A geometric point with field $K$ represents a sequence of
arbitrarily near $K$-points $(t_0,\dots,t_n)$ of $F/Y$.  To give $t_i$ is
the same as giving the local ring $A_i$ of the surface $F^{(i)}_K$ at
the $K$-point $T^{(i)}$, the image of $t_i$.  Set
$\alpha:=\theta'\circ\theta^{-1}$.  Then $\alpha i>\alpha j$ if $t_i$ is
proximate to $t_j$.  So there is a unique sequence $(\td t_0,\dots,\td
t_n)$ whose local rings $\wt A_j$ satisfy $A_i=\wt A_{\alpha i}$ in the
function field of $F_K$.  The sequences $(t_0,\dots,t_n)$ and $(\td
t_0,\dots,\td t_n)$ correspond under $\Phi_{\theta,\theta'}$.

To construct $\Phi_{\theta,\theta'}$, we must work with a sequence
$(t_0,\dots,t_n)$ of $T$-points for an arbitrary $T$.  To do so, instead
of the $A_i$, we use the transforms $e^{(i+1,n+1)}_TE^{(i+1)}_T$.  The
notation becomes more involved, and it is harder to construct $(\td
t_0,\dots,\td t_n)$.  We proceed by induction on $n$: we omit $t_n$,
apply induction, and ``reinsert'' $t_n$ as $\td t_{\alpha n}$.  Most of
the work is done in Lemma~\ref{lemAut}; the reinsertion is justified by
Lemma~\ref{lemL2}.

\begin{lem}\label{lemL2}
Let $(\td t_0,\dots,\td t_{n-1})$ be a strict sequence of arbitrarily
near $T$-points of $F/Y$, say with blowups $\wt F^{(i)}_T$ and so on.
Fix $l$, and let $T^{(l)}\subset \wt F^{(l)}_T$ be the image of a
section $t_l$ of $\wt F^{(l)}_T/T$.  Set $t_i:=\td t_i$ for $0\le i<l$,
and assume the sequence $(t_0,\dots,t_l)$ is strict.  Set
$T_{l}:=\wt T^{(l)}$ and
$T_{i}:=\wt\vf^{(l+1)}_T \dotsm \wt\vf^{(i)}_T\wt T^{(i)}$ for
$l < i < n$,
and assume $T^{(l)}$ and the $T_i$ are disjoint.  Then
$(t_0,\dots,t_l)$ extends uniquely to a strict sequence
$(t_0,\dots,t_n)$, say with blowups $F^{(i)}_T$ and so on, such that
 $t_l$ is a leaf and $F^{(l+1)}_T\x_{\smash{{F^{(l)}_T}}} \wt
F^{(i-1)}_T = F^{(i)}_T$ for $l< i\le n$.  Furthermore, the diagram of
$(t_0,\dots,t_n)$ induces that of $(\td t_0,\dots,\td t_{n-1})$.
\end{lem}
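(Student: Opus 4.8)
The plan is to build $(t_0,\dots,t_n)$ by ``commuting'' the new blowup at $t_l$ past the tower of blowups that makes up the tilde sequence. The disjointness hypothesis is exactly what licenses this: since the center $T^{(l)}$ misses $T_l=\wt T^{(l)}$ and every $T_i$, the blowup $F^{(l+1)}_T\to F^{(l)}_T$ is an isomorphism over a neighborhood of each locus where a subsequent tilde blowup is centered, so the order in which we blow up $T^{(l)}$ and the tilde centers is immaterial. This realizes the ``reinsertion'' of $t_l$ as an isolated leaf.

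Concretely, I would first set $F^{(i)}_T:=\wt F^{(i)}_T$ and $t_i:=\td t_i$ for $i<l$, so that $F^{(l)}_T=\wt F^{(l)}_T$, and let $F^{(l+1)}_T$ be the blowup of $F^{(l)}_T$ along the given center $T^{(l)}$. Then, for $l+1<i\le n+1$, I would define $F^{(i)}_T:=F^{(l+1)}_T\x_{F^{(l)}_T}\wt F^{(i-1)}_T$ and check that the second projection $F^{(i-1)}_T\to\wt F^{(i-2)}_T$ is an isomorphism in a neighborhood of $\wt T^{(i-2)}$. This holds because $\wt T^{(i-2)}$ lies over $T_{i-2}$, which is disjoint from $T^{(l)}$, while $F^{(l+1)}_T\to F^{(l)}_T$ is an isomorphism off $T^{(l)}$. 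Consequently the preimage of $\wt T^{(i-2)}$ is the image $T^{(i-1)}$ of a section $t_{i-1}$ of $F^{(i-1)}_T/T$, and, since blowing up is local on the base and the projection is an isomorphism near that center, the map $F^{(i)}_T\to F^{(i-1)}_T$ is the blowup along $T^{(i-1)}$; the functorial formalism of Proposition~\ref{prp-Har} then packages this as a genuine sequence of arbitrarily near $T$-points satisfying the asserted fiber-product formula (which at $i=l+1$ is the tautology $\wt F^{(l)}_T=F^{(l)}_T$).

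Next I would verify strictness and the leaf property together. Forming the transforms $e^{(j,i)}_T$ of Definition~\ref{dfn:sseq} is functorial, so for $j\neq l+1$ the transform $e^{(j,i)}_TE^{(j)}_T$ is the pullback under the projection $F^{(i)}_T\to\wt F^{(i-1)}_T$ of the corresponding tilde transform. Since $(\td t_0,\dots,\td t_{n-1})$ is strict, alternative (a) or (b) of Definition~\ref{dfn:sseq} holds for these $j$, and, after the index shift $i\mapsto i-1$ for $i>l$, the proximities of Definition~\ref{dfn:prox} among the $t_i$ with $i\neq l$ agree with those of the tilde sequence. For $j=l+1$, the transform $e^{(l+1,i)}_TE^{(l+1)}_T$ is disjoint from every later center $T^{(i)}$, because these lie over the $T_i$, which miss $T^{(l)}$; hence alternative (a) always holds for $j=l+1$, no $t_i$ is proximate to $t_l$, and $t_l$ is a leaf. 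Invoking Proposition~\ref{prp-assdiag}, the resulting ordered proximity structure is thus obtained from that of $(\td t_0,\dots,\td t_{n-1})$ by adjoining $t_l$ as an isolated leaf, which is precisely the statement that the diagram of $(t_0,\dots,t_n)$ induces that of $(\td t_0,\dots,\td t_{n-1})$.

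Finally, uniqueness is immediate: the fiber-product formula pins down each $F^{(i)}_T$ in terms of $F^{(l+1)}_T$ (determined by $(t_0,\dots,t_l)$) and the tilde tower, and then each center $T^{(i)}$ is forced to be the preimage of the tilde center, so the sections $t_i$ are determined. I expect the main obstacle to be the bookkeeping in the strictness/diagram step: carefully matching the transforms $e^{(j,i)}_T$ with the tilde transforms across the fiber products, and checking that commuting the blowup at $t_l$ neither manufactures a spurious proximity to $t_l$ nor destroys an existing proximity among the other points. The disjointness hypothesis, together with the $\bb P^1$-bundle structure of the exceptional divisors recorded in Lemma~\ref{lem:DerFam} and the intersection analysis of Lemma~\ref{lem-Z}, is what controls this.
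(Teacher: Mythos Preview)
Your proposal is correct and follows the same approach as the paper: define the new tower by the fiber product $F^{(i)}_T:=F^{(l+1)}_T\times_{F^{(l)}_T}\wt F^{(i-1)}_T$, use the disjointness hypothesis to see that these are blowups at the lifted centers, and read off strictness, the leaf property, and the induced diagram. The paper's own proof is extremely terse (it simply writes down the fiber-product definition and then asserts that strictness ``follows'' from that of the two input sequences), whereas you spell out the strictness check and the diagram comparison in detail; your expansion is accurate, with the only mild imprecision being the phrase ``is the pullback'' for the transforms $e^{(j,i)}_TE^{(j)}_T$ with $j\le l$ when $t_l$ is proximate to $t_{j-1}$ --- there the strict transform differs from the naive pullback over the exceptional locus, but since every later center $T^{(i)}$ lies over the open complement of $T^{(l)}$, the (a)/(b) alternative is unaffected, exactly as you use.
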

\begin{proof} Set $F_T^{(l)}:=\wt F_T^{(l)}$;
 let $F_T^{(l+1)}$ be the blowup of $F_T^{(l)}$ with center $T^{(l)}$,
and   $E_T^{(l+1)}$ be its exceptional divisor.  For $l< i\le n$, set
$F^{(i+1)}_T:=F^{(l+1)}_T\x_{F^{(l)}_T} \wt F^{(i)}_T$ and 
$T^{(i)}:=F^{(l+1)}_T\x_{F^{(l)}_T} \wt T^{(i-1)}$.
Now, $T^{(l)}$ and $T_i$ are disjoint for $l\le i < n$.  So
$F^{(i+1)}_T$ is the blowup of $F^{(i)}_T$ with center $T^{(i)}$.  Also,
$T^{(i)}$ is the image of a section $t_i$ of $F_T^{(i)}/T$.  Moreover,
since $(t_0,\dots,t_l)$ and $(\td t_0,\dots,\td t_{n-1})$ are strict
sequences, it follows that $(t_0,\dots,t_n)$ is a strict sequence too.
Furthermore, $t_l$ is a leaf, and the diagram of $(t_0,\dots,t_n)$
induces that of $(\td t_0,\dots,\td t_{n-1})$.  Plainly,
$(t_0,\dots,t_n)$ is unique.
\end{proof}

\begin{lem}\label{lemAut}
  Let $\alpha$ be a permutation of $\{0,\dots,n\}$.  Let
$(t_0,\dots,t_n)$ be a strict sequence of arbitrarily near $T$-points of
$F/Y$.  Assume that, if $t_i$ is proximate to $t_j$, then $\alpha
i>\alpha j$.  Then there is a unique strict sequence $(\td t_0,\dots,\td
t_n)$, say with blowups $\wt F^{(i)}_T$, exceptional divisors $\wt
E^{(i)}_T$, and so on, such that $F^{(n+1)}_T = \wt F^{(n+1)}_T$ and
$e^{(i,n+1)}_TE^{(i)}_T= \td e^{(\alpha' i,n+1)}_T\wt E^{(\alpha' i)}_T$
with $\alpha'i:=\alpha(i-1)+1$ for $1\le i\le n+1$; furthermore, $t_i$
is proximate to $t_j$ if and only if $\td t_{\alpha i}$ is proximate to
$\td t_{\alpha j}$.
\end{lem}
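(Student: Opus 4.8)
The plan is to induct on $n$, exactly as sketched before the statement: delete $t_n$, reorder the truncated sequence, and then reinsert $t_n$ into slot $\alpha n$ by means of Lemma~\ref{lemL2}. When $n=0$ the permutation $\alpha$ is the identity and $\td t_0:=t_0$ works, so assume $n\ge1$ and that the lemma holds for all strict sequences of length $n$. I would put $k:=\alpha n$ and pass to the truncation $(t_0,\dots,t_{n-1})$, which is again strict. To reorder it I use the permutation $\beta$ of $\{0,\dots,n-1\}$ obtained from $\alpha$ by deleting the value $k$ and closing the gap: $\beta(i):=\alpha(i)$ when $\alpha(i)<k$ and $\beta(i):=\alpha(i)-1$ when $\alpha(i)>k$. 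Since this compression is order preserving, the hypothesis on $\alpha$ descends to $\beta$, so by induction there is a unique strict sequence $(\wh t_0,\dots,\wh t_{n-1})$, with blowups $\wh F^{(i)}_T$, divisors $\wh E^{(i)}_T$, and maps $\wh\vf^{(i)}_T$, satisfying $\wh F^{(n)}_T=F^{(n)}_T$, the divisor matching $e^{(i,n)}_TE^{(i)}_T=\wh e^{(\beta'i,n)}_T\wh E^{(\beta'i)}_T$ with $\beta'i=\beta(i-1)+1$, and the proximity equivalence.

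Next comes the reinsertion. Because $\wh F^{(n)}_T=F^{(n)}_T$, the section $t_n$ of $F^{(n)}_T/T$ is a section of $\wh F^{(n)}_T/T$; its image under $g:=\wh\vf^{(k+1)}_T\dotsm\wh\vf^{(n)}_T$ is a center $C^{(k)}\subset\wh F^{(k)}_T$ at level $k$. I apply Lemma~\ref{lemL2} to $(\wh t_0,\dots,\wh t_{n-1})$ with $l:=k$ and this new section, producing a strict sequence $(\td t_0,\dots,\td t_n)$ in which $\td t_k=\td t_{\alpha n}$ is a leaf. Granting the disjointness hypothesis of that lemma (verified below), it yields $\wt F^{(n+1)}_T=\wt F^{(k+1)}_T\times_{\wt F^{(k)}_T}\wh F^{(n)}_T$ together with fiber-product descriptions of the intermediate $\wt F^{(i)}_T$. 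The same disjointness shows $g$ is an isomorphism over a neighborhood of $C^{(k)}$, whence $g^{-1}C^{(k)}=T^{(n)}$; so blowing up commutes with base change along $g$ (Proposition~\ref{prp-Har}), and $\wt F^{(n+1)}_T$ is the blowup of $F^{(n)}_T$ along $T^{(n)}$, that is, $\wt F^{(n+1)}_T=F^{(n+1)}_T$. The top divisors then match since $\alpha'(n+1)=k+1$, and the cases $e^{(i,n+1)}_TE^{(i)}_T=\wt e^{(\alpha'i,n+1)}_T\wt E^{(\alpha'i)}_T$ for $1\le i\le n$ are inherited from the inductive matching through the fiber-product description. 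Finally, the proximity equivalence $t_i\succ t_j\iff\td t_{\alpha i}\succ\td t_{\alpha j}$ follows from the divisor matching, since by Lemma~\ref{lem-Z} proximity is read off from which strict transforms meet at level $n+1$, and $\alpha'$ identifies those transforms.

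The crux, and the step I expect to be the main obstacle, is the disjointness hypothesis: $C^{(k)}$ must be disjoint from $\wh T^{(k)}$ and from each $T_i:=\wh\vf^{(k+1)}_T\dotsm\wh\vf^{(i)}_T\wh T^{(i)}$ for $k<i<n$. This is the one place where the compatibility $t_i\succ t_j\Rightarrow\alpha i>\alpha j$ does real work, and it requires translating an intersection in the reordered tower back into a proximity of the original sequence. Suppose some such intersection were nonempty; then a geometric point of $T^{(n)}$ maps under $g$ to a point $P$ lying in the image of a center blown up at a level $>k$. Following that point down the reordered tower, it must lie on the strict transform $\wh e^{(m,n)}_T\wh E^{(m)}_T$ of some exceptional divisor created over $P$, necessarily with $m>k$. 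Hence $T^{(n)}$ meets, so by strictness is contained in, $\wh e^{(m,n)}_T\wh E^{(m)}_T$; by the inductive matching this equals $e^{(j+1,n)}_TE^{(j+1)}_T$ for $j:=\beta^{-1}(m-1)$, so $t_n$ is proximate to $t_j$. But $m>k$ forces $\beta(j)=m-1\ge k$, hence $\alpha j>k=\alpha n$, contradicting $t_n\succ t_j\Rightarrow\alpha n>\alpha j$. Thus every intersection is empty and Lemma~\ref{lemL2} applies; the same argument, specialized to the identification $g^{-1}C^{(k)}=T^{(n)}$, is what makes $\wt F^{(n+1)}_T=F^{(n+1)}_T$ in the previous paragraph.

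For uniqueness, any strict $(\td t_0,\dots,\td t_n)$ satisfying the three conclusions has $\td t_{\alpha n}$ a leaf, because $t_n$ is a leaf and proximity corresponds. Deleting this leaf returns a strict sequence of length $n$, which one checks solves the problem for $(t_0,\dots,t_{n-1})$ and $\beta$; by induction it is $(\wh t_0,\dots,\wh t_{n-1})$, and then the uniqueness clause of Lemma~\ref{lemL2} recovers $\td t_{\alpha n}$. Hence $(\td t_0,\dots,\td t_n)$ is unique, completing the induction.
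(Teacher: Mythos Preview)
Your approach is essentially the paper's: induct on $n$, set $l:=\alpha n$ (your $k$), compress $\alpha$ to $\beta$, apply induction to $(t_0,\dots,t_{n-1})$ to get $(\wh t_0,\dots,\wh t_{n-1})$, then reinsert via Lemma~\ref{lemL2}. Your disjointness check for the hypothesis $T^{(l)}\cap T_i=\emptyset$ is also the paper's argument.

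Two places need more care. First, Lemma~\ref{lemL2} requires as a hypothesis that $(\td t_0,\dots,\td t_k)$ be \emph{strict}, and you do not verify this. The paper devotes a paragraph to it: one must show that if the pushed-down center $\wt T^{(k)}$ meets some $\wh e^{(j,k)}_T\wh E^{(j)}_T$ with $1\le j\le k$, then it is contained in it. This is handled by pulling back to level $n$: $T^{(n)}$ then meets some component $\wh e^{(j',n)}_T\wh E^{(j')}_T$ of the preimage, hence is contained in it by strictness of the original sequence (via the inductive divisor matching), and one argues that in fact $\wh e^{(j',k)}_T\wh E^{(j')}_T=\wh e^{(j,k)}_T\wh E^{(j)}_T$, so the containment descends. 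This is the same flavor as your disjointness argument, but it concerns indices $j\le k$ rather than $m>k$, and is not covered by what you wrote. Along the way the paper also extracts the equivalence ``$\td t_k\succ\td t_{j}\iff t_n\succ t_{\beta'^{-1}j}$,'' which it later uses for the proximity statement; your appeal to Lemma~\ref{lem-Z} at level $n+1$ is a legitimate alternative for that part.

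Second, your uniqueness argument differs from the paper's and the phrase ``which one checks'' hides real work. The paper does not delete $\td t_{\alpha n}$ and invoke the outer induction; instead it proves uniqueness directly by induction on $j$: given the map $F^{(n+1)}_T\to\wt F^{(j)}_T$, the center $\wt T^{(j)}$ is forced as the scheme-theoretic image of $\td e^{(j+1,n+1)}_T\wt E^{(j+1)}_T$ (Lemma~\ref{lem-Z}), whence $\wt F^{(j+1)}_T$ and the map into it are determined by the universal property of blowing up. Your route can be made to work, but you would have to show that deleting the leaf $\td t_k$ from the $\wt$-tower yields a tower whose top is $F^{(n)}_T$ together with the level-$n$ divisor matching for $\beta'$; the paper's direct argument sidesteps this.
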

\begin{proof}
Assume $(\td t_0,\dots,\td t_n)$ exists.  
Let's prove, by induction on $j$, that both the sequence $(\td
t_0,\dots,\td t_j)$ and the map $F^{(n+1)}_T\to \wt F^{(j+1)}_T$ are
determined by the equality $F^{(n+1)}_T = \wt F^{(n+1)}_T$ and the $n+1$
equalities $e^{(i,n+1)}_TE^{(i)}_T= \td e^{(\alpha' i,n+1)}_T\wt
E^{(\alpha' i)}_T$ where $1\le i\le n+1$.
  If $j=-1$,
there's nothing to prove.  So suppose $j\ge0$.  Then $\wt T^{(j+1)}$ is
determined as the scheme-theoretic image of $\td e^{(j+2,n+1)}_T\wt
E^{(j+2)}_T$ by Lemma~\ref{lem-Z}.  So $\td t_{j+1}$ is determined.  But
then $\wt F^{(j+2)}_T$ is determined as the blowup of $\wt T^{(j+1)}$.
And $F^{(n+1)}_T\to \wt F^{(j+2)}_T$ is determined, because the preimage
of  $\wt T^{(j+1)}$ in  $F^{(n+1)}_T$ is a divisor.  Thus  $(\td
t_0,\dots,\td t_n)$  is unique.

To prove $(\td t_0,\dots,\td t_n)$ exists, let's proceed by induction on
$n$.  Assume $n=0$.  Then $\alpha=1$.  So plainly $\td t_0$ exists; just
take $\td t_0:=t_0$.

So assume $n\ge 1$.  Set $l:=\alpha n$.  Define a permutation $\beta$ of
$\{0,\dots,n-1\}$ by $\beta i:= \alpha i$ if $\alpha i<l$ and $\beta i:=
\alpha i-1$ if $\alpha i>l$.

Suppose $t_i$ is proximate to $t_j$ with $i<n$, and let us check that
$\beta i > \beta j$.  The hypothesis yields $\alpha i>\alpha j$.  So if
either $\alpha i<l$ or $\alpha j>l$, then $\beta i > \beta j$.  Now,
$\alpha i\neq l$ since $i<n$ and $l:=\alpha n$.  Similarly, $\alpha
j\neq l$ since $j<i$ as $t_i$ is proximate to $t_j$.  But if $\alpha
i>l$, then $\beta i:= \alpha i-1\ge l$, and if $\alpha j<l$, then $\beta
j:= \alpha j < l$.  Thus $\beta i > \beta j$.

Since $(t_0,\dots,t_{n-1})$ is strict, induction applies: there exists a
strict sequence $(\hat t_0,\dots,\hat t_{n-1})$, say with blowups $\wh
F^{(i)}_T$ and so forth, such that 
 $F^{(n)}_T = \wh F^{(n)}_T$ and $e^{(i,n)}_TE^{(i)}_T= \hat e^{(\beta'
i,n)}_T\wh E^{(\beta' i)}_T$ with $\beta'i:=\beta(i-1)+1$ for $1\le i\le
n$; furthermore, $t_i$ is proximate to $t_j$ if and only if $\td
t_{\beta i}$ is proximate to $\td t_{\beta j}$.  Set $\td t_i:=\hat t_i$
for $0\le i<l$.

Set $\td t_l:=\wh \vf^{(l+1)}_T \dotsm \wh \vf^{(n)}_Tt_n$ and $\wt
T^{(l)} := \wh \vf^{(l+1)}_T \dotsm\wh \vf^{(n)}_TT^{(n)}$.  Then $\td
t_l$ is a section of $\wh F^{(l)}_T/T$, and $\wt T^{(l)}$ is its image.
Note that, if $T^{(n)}$ meets $\hat e^{(j,n)}_T\wh E^{(j)}_T$ with $1\le
j\le n$, then $T^{(n)}$ is contained in $\hat e^{(j,n)}_T\wh E^{(j)}_T$,
because $\hat e^{(j,n)}_T\wh E^{(j)}_T=e^{(i,n)}_TE^{(i)}_T$ for
$i:=\beta'^{-1}j$ and because $(t_0,\dots,t_n)$ is strict.  Furthermore,
if so, then $l>j$, because $t_n$ is proximate to $t_i$, and so $\alpha
n>\alpha i$, or $l>\beta i=j$; moreover, then $\wt T^{(l)}$ is contained
in $\hat e^{(j,l)}_T\wh E^{(j)}_T$, because the latter is equal to $\wh
\vf^{(l+1)}_T \dotsm\wh \vf^{(n)}_T\hat e^{(j,n)}_T\wh E^{(j)}_T$ since
$l>j$.

Suppose $\wt T^{(l)}$ meets  $\hat e^{(k,l)}_T\wh E^{(k)}_T$.  Then
$T^{(n)}$ meets $(\wh \vf^{(l+1)}_T \dotsm \wh \vf^{(n)}_T)^{-1}\hat e^{(k,l)}_T\wh
E^{(k)}_T$.  So $T^{(n)}$ meets one of the latter's components, which is
a $\hat e^{(j,n)}_T\wh E^{(j)}_T$ for some $j$.  Hence $T^{(l)} \subset
\hat e^{(j,l)}_T\wh E^{(j)}_T$, as was noted above.  Now, the map $\hat
e^{(j,n)}_T\wh E^{(j)}_T\to \wh F^{(l)}_T$ factors through $\wh
E^{(k)}_T$, and its image is $\hat e^{(j,l)}_T\wh E^{(j)}_T$, as was
noted above.  So $\hat e^{(j,l)}_T\wh E^{(j)}_T$ is contained in $\hat
e^{(k,l)}_T\wh E^{(k)}_T$; whence, the two coincide, since they are flat
and coincide on the fibers over $T$.  Thus $\wt T^{(l)}$ is contained in
$\hat e^{(k,l)}_T\wh E^{(k)}_T$.  Hence, since $(\td t_0,\dots,\td
t_{l-1})$ is strict, so is $(\td t_0,\dots,\td t_l)$.  Furthermore,
$T^{(n)}$ is contained in $\hat e^{(k,n)}_T\wh E^{(k)}_T$.  Thus if $\td
t_l$ is proximate to $\td t_k$, then $t_n$ is proximate to $t_i$ for
$i:=\beta'^{-1}k$.  Moreover, the converse follows from what was noted
above.

Set $T_{l}:=\wt T^{(l)}$ and
$T_{i}:=\wt\vf^{(l+1)}_T \dotsm \wt\vf^{(i)}_T\wt T^{(i)}$ for
$l < i < n$.  Then $\wt T^{(l)}$ meets no $\wt T_{i}$, because,
otherwise, $T^{(n)}$ would meet $(\wh \vf^{(l+1)}_T \dotsm \wh
\vf^{(n)}_T)^{-1}\wt T_{i+1}$, and so $T^{(n)}$ would meet some $\hat
e^{(j,n)}_T\wh E^{(j)}_T$ with $l < j$, contrary to the note
above.  So Lemma~\ref{lemL2} implies  $(\td t_0,\dots, \td
t_l)$ extends to a strict sequence $(\td t_0,\dots, \td t_n)$ such that
 $\td t_l$ is a leaf and $\wt F^{(l+1)}_T\x_{\smash{{\wt F^{(l)}_T}}}
\wh F^{(i)}_T = \wt F^{(i+1)}_T$ for $l< i\le n$; furthermore, the
diagram of $(\td t_0,\dots,\td t_n)$ induces that of $(\hat
t_0,\dots,\hat t_{n-1})$.

Therefore, $t_i$ is proximate to $t_j$ if and only if $\td
t_{\alpha i}$ is proximate to $\td t_{\alpha j}$ for $0\le i\le n$,
because $t_i$ is proximate to $t_j$ if and only if $t_{\beta i}$ is
proximate to $t_{\beta j}$ for $0\le i < n$ and because $t_n$ is
proximate to $t_j$ if and only if $\td t_l$ is proximate to $\td t_k$
for $k:=\beta' j$.

Recall from above that $F^{(n)}_T = \wh F^{(n)}_T$ and $\wt
F^{(l+1)}_T\x _{\smash{{\wt F^{(l)}_T}}} \wh F^{(n)}_T = \wt F^{(n+1)}_T$.
But this product is equal to the blowup of $\wh F^{(n)}_T$ along $T^{(n)}$
since $\wt T^{(l)}$ meets no $\wh T_{i}$.  And the blowup of $F^{(n)}_T$
along $T^{(n)}$ is $F^{(n+1)}_T$.  Thus $F^{(n+1)}_T = \wt F^{(n+1)}_T$.

Recall $e^{(i,n)}_TE^{(i)}_T= \hat e^{(\beta' i,n)}_T\wh E^{(\beta'
i)}_T$ for $1\le i\le n$.  Hence, $e^{(i,n+1)}_TE^{(i)}_T$ is equal to
the image of a natural embedding of $\hat e^{(\beta' i,n)}_T\wh
E^{(\beta' i)}_T$ in $\wt F^{(n+1)}_T$.  In turn, this image is equal to
$\wt e^{(\alpha' i,n)}_T\wt E^{(\alpha' i)}_T$ since $\wt T^{(l)}$ meets
no $\wh T_{i}$.  Similarly, $E^{(n+1)}_T = \td e^{(l+1,n+1)}_T\wt
E^{(l+1)}_T$.  Thus $e^{(i,n+1)}_TE^{(i)}_T = \td e^{(\alpha'
i,n+1)}_T\wt E^{(\alpha' i)}_T$ for $1\le i\le n+1$.
\end{proof}

\begin{prp}\label{prpIso}
 Fix an unweighted Enriques diagram $\I U$.
Then, given two orderings $\theta$ and $\theta'$, there exists a
natural isomorphism
 $$\Phi_{\theta,\theta'}\:F(\I U,\,\theta)
 \risom F(\I U,\,\theta').$$
Furthermore,  $\Phi_{\theta,\theta}=1$, and $\Phi_{\theta',\theta''}\circ
 \Phi_{\theta,\theta'}=\Phi_{\theta,\theta''}$ for any third ordering
$\theta''$.
\end{prp}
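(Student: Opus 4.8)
The plan is to read $\Phi_{\theta,\theta'}$ directly off Lemma~\ref{lemAut}, applied to the permutation $\alpha:=\theta'\circ\theta^{-1}$ of $\{0,\dots,n\}$. First I would verify that $\alpha$ meets the hypothesis of that lemma on every $T$-point of $F(\I U,\theta)$: such a point is a strict sequence $(t_0,\dots,t_n)$ whose proximity structure is ordered-isomorphic to $(\I U,\theta)$, so ``$t_i$ proximate to $t_j$'' means $\theta^{-1}i\succ\theta^{-1}j$ in $\I U$; since $\theta'$ is an ordering, this forces $\theta'\theta^{-1}i>\theta'\theta^{-1}j$, i.e. $\alpha i>\alpha j$. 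Thus Lemma~\ref{lemAut} yields a unique strict sequence $(\td t_0,\dots,\td t_n)$ with $\td t_{\alpha i}$ proximate to $\td t_{\alpha j}$ if and only if $\theta^{-1}i\succ\theta^{-1}j$. Because $\theta'^{-1}\alpha=\theta^{-1}$, this says exactly that $(\td t_0,\dots,\td t_n)$ has diagram $(\I U,\theta')$, using that an ordered proximity structure determines its ordered unweighted Enriques diagram up to unique isomorphism (Subsection~\ref{sb-Ud} and Corollary~\ref{cor-ps}). So the assignment $(t_i)\mapsto(\td t_i)$ sends $T$-points of $F(\I U,\theta)$ to $T$-points of $F(\I U,\theta')$.

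Next I would check that this assignment is a natural transformation of functors, so that it descends to a morphism of the representing schemes. Given $T'\to T$, the pullback of $(t_i)$ is strict with diagram $(\I U,\theta)$, and the defining equalities of Lemma~\ref{lemAut}, namely $F^{(n+1)}_T=\wt F^{(n+1)}_T$ and $e^{(i,n+1)}_TE^{(i)}_T=\td e^{(\alpha'i,n+1)}_T\wt E^{(\alpha'i)}_T$ with $\alpha'i:=\alpha(i-1)+1$, are built from blowups and scheme-theoretic images that commute with base change, as in Proposition~\ref{prp-Har}. Hence the pullback of $(\td t_i)$ satisfies those same equalities over $T'$, and by the uniqueness clause of Lemma~\ref{lemAut} it coincides with the sequence the construction produces over $T'$. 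This gives functoriality, and Yoneda yields the morphism $\Phi_{\theta,\theta'}\:F(\I U,\theta)\to F(\I U,\theta')$.

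Then I would establish the two stated identities, from which the isomorphism claim follows formally. For $\Phi_{\theta,\theta}$ the permutation is the identity and $\alpha'i=i$, so $\td t_i:=t_i$ satisfies every defining equality and uniqueness gives $\Phi_{\theta,\theta}=1$. For the cocycle relation, set $\alpha:=\theta'\theta^{-1}$, $\beta:=\theta''\theta'^{-1}$, $\gamma:=\theta''\theta^{-1}=\beta\alpha$; applying $\Phi_{\theta,\theta'}$ then $\Phi_{\theta',\theta''}$ to $(t_i)$ produces $(\td t_i)$ then $(\td{\td t}_i)$, and composing the two matchings of transforms computes $e^{(i,n+1)}_TE^{(i)}_T=\td{\td e}^{(\beta'\alpha'i,n+1)}_T\wt{\wt E}^{(\beta'\alpha'i)}_T$. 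The key index computation is $\beta'(\alpha'i)=\beta(\alpha(i-1))+1=\gamma(i-1)+1=\gamma'i$, so $(\td{\td t}_i)$ satisfies the defining equalities of Lemma~\ref{lemAut} for $\gamma$ applied to $(t_i)$; uniqueness then identifies it with the output of $\Phi_{\theta,\theta''}$, giving $\Phi_{\theta',\theta''}\circ\Phi_{\theta,\theta'}=\Phi_{\theta,\theta''}$. Specializing $\theta''=\theta$ gives $\Phi_{\theta',\theta}\circ\Phi_{\theta,\theta'}=1$ and, symmetrically, $\Phi_{\theta,\theta'}\circ\Phi_{\theta',\theta}=1$, so each $\Phi_{\theta,\theta'}$ is an isomorphism with inverse $\Phi_{\theta',\theta}$.

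The genuinely new content is entirely contained in Lemma~\ref{lemAut}, so what remains is bookkeeping; the two points needing care are that the output sequence has diagram precisely $(\I U,\theta')$ (via $\theta'^{-1}\alpha=\theta^{-1}$) and that the index shift $\alpha'i=\alpha(i-1)+1$ composes correctly in the cocycle step. I expect the main obstacle to be naturality, which must be extracted from the uniqueness in Lemma~\ref{lemAut} together with base-change stability of blowups and scheme-theoretic images, rather than reconstructed by hand for a general base change.
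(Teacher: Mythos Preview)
Your proposal is correct and follows essentially the same route as the paper: both apply Lemma~\ref{lemAut} with $\alpha:=\theta'\circ\theta^{-1}$, verify its hypothesis via the ordering property of $\theta'$, and then derive functoriality, $\Phi_{\theta,\theta}=1$, and the cocycle relation from the uniqueness clause of that lemma. You are simply more explicit than the paper in two places---spelling out why uniqueness forces naturality under base change, and carrying out the index computation $\beta'\alpha'=\gamma'$ for the cocycle step---where the paper just writes ``due to uniqueness.''
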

\begin{proof}
Say $\I U$ has $n+1$ vertices.  Set $\alpha:=\theta'\circ\theta^{-1}$.
Then $\alpha$ is a permutation of $\{0,\dots,n\}$.

Each $T$-point of $F(\I U,\,\theta)$ corresponds to a strict
sequence $(t_0,\dots,t_n)$ owing to Theorem~\ref{thm:3-2}.  For each
$i$, say $t_i$ corresponds to the vertex $V_i$ of $\I U$.  Then
$\theta(V_i)=i$, and if $t_i$ is proximate to $t_j$, then $V_i$ is
proximate to $V_j$.  So $\theta'(V_i)>\theta'(V_j)$ since $\theta'$ is
an ordering.  Hence $\alpha i>\alpha j$.

Therefore, by Lemma~\ref{lemAut}, there is a unique strict sequence
$(\td t_0,\dots, \td t_n)$ such that $t_i$ is proximate to $t_j$ if and
only if $\td t_{\alpha i}$ is proximate to $\td t_{\alpha j}$.  Plainly
$(\td t_0,\dots, \td t_n)$ has $(\I U,\,\theta')$ as its diagram.  Hence
$(\td t_0,\dots, \td t_n)$ corresponds to a $T$-point of $F(\I
U,\,\theta')$ owing to Theorem~\ref{thm:3-2}.

Due to uniqueness, sending $(t_0,\dots,t_n)$ to $(\td t_0,\dots, \td
t_n)$ gives a well-defined map of functors.  It is represented by a map
$\Phi_{\theta,\theta'}\:F(\I U,\,\theta) \to F(\I U,\,\theta')$.  Again
due to uniqueness, $\Phi_{\theta,\theta}=1$ and
$\Phi_{\theta',\theta''}\circ \Phi_{\theta,\theta'} =
\Phi_{\theta,\theta''}$ for any $\theta''$.  So $\Phi_{\theta',\theta}
\circ \Phi_{\theta,\theta'} = 1$ and $\Phi_{\theta,\theta'} \circ
\Phi_{\theta',\theta} = 1$.  Thus $\Phi_{\theta,\theta'}$ is an
isomorphism, and the proposition is proved.
\end{proof}

\begin{cor}\label{corAut}
 Fix an ordered unweighted Enriques diagram $(\I U,\,\theta)$.  Then
there is a natural free right action of $\Aut(\I U)$ on $F(\I
U,\,\theta)$; namely, $\gamma\in \Aut(\I U)$ acts as
$\Phi_{\theta,\theta'}$ where $\theta':=\theta\circ\gamma$.
\end{cor}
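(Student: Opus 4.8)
The plan is to unpack the four things the corollary bundles together: that $\theta':=\theta\circ\gamma$ is again an ordering, that the isomorphism $\Phi_{\theta,\theta'}$ of Proposition~\ref{prpIso} is in fact an \emph{automorphism} of the single scheme $F(\I U,\,\theta)$, that $\gamma\mapsto\Phi_{\theta,\theta\circ\gamma}$ respects the group law, and that the resulting action is free. For the first two, note that $\gamma\in\Aut(\I U)$ preserves the directed-graph structure, so $V$ precedes $W$ if and only if $\gamma V$ precedes $\gamma W$; since $\theta$ is an ordering, $\theta(\gamma V)\le\theta(\gamma W)$ whenever $V$ precedes $W$, which is precisely the requirement that $\theta\circ\gamma$ be an ordering. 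Moreover $\gamma$ itself is an isomorphism of ordered diagrams between $(\I U,\,\theta\circ\gamma)$ and $(\I U,\,\theta)$, since it preserves proximity and satisfies $\theta\circ\gamma=\theta\circ\gamma$; hence the two ordered diagrams have the same proximity matrix. By the functorial description in Theorem~\ref{thm:3-2} the subschemes $F(\I U,\,\theta)$ and $F(\I U,\,\theta\circ\gamma)$ of $F^{(n)}$ therefore coincide, so $\Phi_{\theta,\theta\circ\gamma}$ is a \emph{self}-isomorphism of $F(\I U,\,\theta)$.

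For the action axioms I would appeal directly to Proposition~\ref{prpIso}. The relation $\Phi_{\theta,\theta}=1$ shows that $\gamma=1$ acts as the identity, while the cocycle relation $\Phi_{\theta',\theta''}\circ\Phi_{\theta,\theta'}=\Phi_{\theta,\theta''}$, applied along the chain of orderings $\theta,\ \theta\circ\gamma,\ \theta\circ\gamma\circ\delta$, yields $\Phi_{\theta\gamma,\theta\gamma\delta}\circ\Phi_{\theta,\theta\gamma}=\Phi_{\theta,\theta\gamma\delta}$. Since $\theta\mapsto\theta\circ\gamma$ is a right action of $\Aut(\I U)$ on the set of orderings, because $\theta\circ(\gamma\delta)=(\theta\circ\gamma)\circ\delta$, this exhibits $\gamma\mapsto\Phi_{\theta,\theta\circ\gamma}$ as a right action on $F(\I U,\,\theta)$. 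Naturality in the base $Y$ is inherited from that of the maps $\Phi$.

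The main work is freeness, and this is the step I expect to be the real obstacle. Here I would show that for every $\gamma\ne1$ the automorphism $\Phi_{\theta,\theta\circ\gamma}$ has empty fixed locus. That fixed locus is a closed subscheme of $F(\I U,\,\theta)$, so it suffices to prove it has no points valued in any field, i.e.\ no fixed geometric points; the reduction is legitimate because a closed subscheme with no field-valued points is empty, and this is the one point requiring a little care. Now a geometric point over a field $K$ is a strict sequence $(t_0,\dots,t_n)$, equivalently the chain of local rings $A_0,\dots,A_n$ of its centers inside the function field of $F_K$, and these rings are \emph{pairwise distinct} subrings of that field, since the centers are distinct points on the various blown-up models. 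By the description of $\Phi_{\theta,\theta'}$ on geometric points recalled at the start of this section, $\Phi_{\theta,\theta\circ\gamma}$ sends $(A_i)$ to the relabelled chain $(\td A_j)$ determined by $\td A_{\bar\gamma i}=A_i$, where $\bar\gamma:=\theta\circ\gamma\circ\theta^{-1}$. A fixed point would force $\td A_j=A_j$ for all $j$, hence $A_{\bar\gamma^{-1}j}=A_j$; distinctness of the $A_i$ then gives $\bar\gamma=1$, and since conjugation by $\theta$ is injective, $\gamma=1$, a contradiction. Thus every nontrivial $\gamma$ acts without fixed points, the action is free, and the corollary follows.
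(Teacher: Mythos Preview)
Your argument tracks the paper's closely for the first steps: that $\theta'=\theta\circ\gamma$ is an ordering, that $(\I U,\theta')\cong(\I U,\theta)$ via $\gamma$ so $F(\I U,\theta')=F(\I U,\theta)$ as subschemes of $F^{(n)}$, and that the identity acts trivially.

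For the composition law there is a gap. The cocycle yields $\Phi_{\theta\gamma,\theta\gamma\delta}\circ\Phi_{\theta,\theta\gamma}=\Phi_{\theta,\theta\gamma\delta}$, but the action axiom $(x\cdot\gamma)\cdot\delta=x\cdot(\gamma\delta)$ requires $\Phi_{\theta,\theta\delta}\circ\Phi_{\theta,\theta\gamma}=\Phi_{\theta,\theta\gamma\delta}$, since $\delta$ acts, by your own definition, as the fixed automorphism $\Phi_{\theta,\theta\delta}$. One must still argue that $\Phi_{\theta\gamma,\theta\gamma\delta}$ and $\Phi_{\theta,\theta\delta}$ coincide as self-maps of $F(\I U,\theta)$; the fact that orderings form a right $\Aut(\I U)$-set does not by itself give this. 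The paper supplies exactly this missing step: setting $\theta'':=\theta'\delta$ and $\theta^*:=\theta\delta$, it observes that $\gamma$ also induces an isomorphism of ordered diagrams between $(\I U,\theta'')$ and $(\I U,\theta^*)$, and concludes that $\Phi_{\theta',\theta''}$ and $\Phi_{\theta,\theta^*}$ coincide.

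Your freeness argument is correct but differs from the paper's. You reduce to geometric points via the closed fixed-locus argument, use the local-ring description of $\Phi$ from the introduction to Section~\ref{scIae}, and conclude from the pairwise distinctness of the $A_i$ that $\bar\gamma=1$. The paper instead works directly with an arbitrary fixed $T$-point: since $\Phi_{\theta,\theta'}$ is constructed via Lemma~\ref{lemAut}, and that lemma asserts the permutation $\alpha=\theta'\theta^{-1}$ is determined by how it permutes the divisors $e^{(i,n+1)}_TE^{(i)}_T$, a fixed $T$-point forces this permutation to be trivial, whence $\alpha=1$ and $\gamma=1$. The paper's route is more direct---it avoids the reduction to fields and the implicit appeal to distinctness of the $A_i$---but your approach is sound.
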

\begin{proof}
Let $V\in\I U$ be a vertex that precedes another $W$.  Then $\gamma(V)$
precedes $\gamma(W)$ because $\gamma\in \Aut(\I U)$.  Since $\theta$ is
an ordering, $\theta(\gamma(V)) \le \theta(\gamma(W))$.  Hence $\theta'(V)
\le \theta'(W)$.  Thus $\theta'$ is an ordering.

So there is a natural isomorphism $\Phi_{\theta,\theta'}\:F(\I
U,\,\theta) \risom F(\I U,\,\theta')$ by Proposition~\ref{prpIso}.
Now, $\gamma$ induces an isomorphism of ordered unweighted Enriques
diagrams from $(\I U,\,\theta')$ to $(\I U,\,\theta)$; hence,
$F(\I U,\,\theta')$ and $F(\I U,\,\theta)$ are the same
subscheme of $F^{(n)}$, and $\Phi_{\theta,\theta'}$ is an automorphism
of $F(\I U,\,\theta)$.

Note that, if $\gamma=1$, then $\theta'=\theta$; moreover,
$\Phi_{\theta,\theta}=1$.

Given $\delta\in \Aut(\I U)$, set $\theta'':=\theta'\circ\delta$ and
$\theta^*:=\theta\circ\delta$.  Then $\gamma$ also induces an isomorphism
from $(\I U,\,\theta'')$ to $(\I U,\,\theta^*)$, and so
$\Phi_{\theta',\theta''}$ and $\Phi_{\theta,\theta^*}$ coincide.  Now,
$\Phi_{\theta',\theta''}\circ \Phi_{\theta,\theta'} =
\Phi_{\theta,\theta''}$.  Thus $\Aut(\I U)$ acts on
$F(\I U,\,\theta)$, but it acts on the right because $\theta''$ is
equal to $\theta\circ(\gamma\delta)$, not to  $\theta\circ(\delta\gamma)$.

Suppose $\gamma$ has a fixed $T$-point.  Then the $T$-point is fixed
under $\Phi_{\theta,\theta'}$.  Now, we defined $\Phi_{\theta,\theta'}$
by applying Lemma~\ref{lemAut} with $\alpha:=\theta'\circ\theta^{-1}$.
And the lemma asserts that $\alpha$ is determined by its action on the
$e^{(i,n+1)}_TE^{(i)}_T$.  But this action is trivial because the
$T$-point is fixed.  Hence $\alpha=1$.  But
$\alpha=\theta\circ\gamma\circ\theta^{-1}$.  Therefore, $\gamma=1$.
Thus the action of $\Aut(\I U)$ is free, and the corollary is proved.
\end{proof}

\begin{cor}\label{corQt}
  Fix an ordered unweighted Enriques diagram $(\I U,\,\theta)$, and let
$G\subset{\Aut}(\I U)$ be a subgroup.  Then the quotient $F(\I
U,\,\theta)\big/G$ is $Y$-smooth with irreducible geometric fibers of
dimension $\dim(\I U)$.
\end{cor}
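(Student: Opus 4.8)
The plan is to realize the quotient map as a finite \'etale $G$-torsor and then descend the geometry from $F(\I U,\,\theta)$, whose properties are furnished by Theorem~\ref{thm:3-2}, down to the quotient. Write $X:=F(\I U,\,\theta)$. Since $\I U$ is a finite forest, $\Aut(\I U)$ is a finite group, so $G$ is finite; and by Corollary~\ref{corAut}, $G$ acts freely on $X$. As $X$ is a locally closed subscheme of the iterated blow-up $F^{(n)}$, it is quasi-projective over $Y$, so every $G$-orbit lies in an affine open; hence the quotient $X/G$ exists as a $Y$-scheme and the quotient map $p\:X\to X/G$ is finite and faithfully flat. Because the action is free, $p$ is in fact a $G$-torsor, and so finite \'etale of degree $|G|$.

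First I would check that $q\:X/G\to Y$ inherits flatness and local finite presentation. Both descend along the faithfully flat, quasi-compact map $p$: since $q\circ p$ is the smooth map $X\to Y$ of Theorem~\ref{thm:3-2}, it is flat and locally of finite presentation, and as $p$ is faithfully flat it follows that $q$ is flat and locally of finite presentation. It then remains, by the fiberwise criterion for smoothness, to show that the geometric fibers of $q$ are smooth and irreducible of dimension $\dim(\I U)$.

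The key point is that forming the quotient commutes with base change to a geometric fiber. Fix a geometric point $\bar y$ of $Y$. Since $p$ is a $G$-torsor and torsors are stable under base change, $(X/G)_{\bar y}\cong X_{\bar y}/G$, and $X_{\bar y}\to X_{\bar y}/G$ is again a $G$-torsor, hence finite \'etale. By Theorem~\ref{thm:3-2}, $X_{\bar y}$ is smooth and irreducible of dimension $\dim(\I U)$ over the algebraically closed field $\kappa(\bar y)$. A free finite-group quotient of a smooth variety is smooth of the same dimension, because the quotient map is \'etale and surjective, so regularity and dimension descend; and $X_{\bar y}/G$ is irreducible as the continuous surjective image of the irreducible space $X_{\bar y}$. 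Thus each geometric fiber of $q$ is smooth and irreducible of dimension $\dim(\I U)$, and combined with flatness and local finite presentation this shows $q$ is $Y$-smooth with irreducible geometric fibers of dimension $\dim(\I U)$, as claimed.

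The main obstacle I anticipate is purely foundational: ensuring the quotient exists as a scheme (orbits contained in affine opens) and, above all, that $p$ is a $G$-torsor, so that its formation commutes with the base change $\bar y\to Y$. Once these torsor properties are in hand, the descent of flatness and the fiber computation are formal; the substance of the statement is already carried by Theorem~\ref{thm:3-2} together with the freeness of the action from Corollary~\ref{corAut}.
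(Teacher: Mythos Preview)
Your proof is correct and takes essentially the same approach as the paper: the free action from Corollary~\ref{corAut} makes the quotient map finite faithfully flat (indeed \'etale), and the properties supplied by Theorem~\ref{thm:3-2} descend. The paper's proof is simply a terser version of yours---it invokes the finite flat equivalence relation to conclude the quotient exists with faithfully flat projection, then asserts in one line that $Y$-smoothness with irreducible geometric fibers passes to the quotient, without unpacking the torsor formalism or the fiberwise criterion you spell out.
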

\begin{proof}
The action of $G$ on $F(\I U,\,\theta)$ is free by
Corollary~\ref{corAut}.  So $G$ defines a finite flat equivalence
relation on $F(\I U,\,\theta)$.  Therefore, the quotient exists, and the
map $F(\I U,\,\theta)\to F(\I U,\,\theta)\big/G$ is faithfully flat.  Now,
$F(\I U,\,\theta)$ is $Y$-smooth with irreducible geometric fibers of
dimension $\dim(\I U)$ by Theorem~\ref{thm:3-2}; so 
$F(\I U,\,\theta)\big/G$ is too.
\end{proof}

\begin{dfn}\label{dfnEff}
For $1\le i\le j$, set $E^{(i,i)}:=E^{(i)}$ and 
	$$E^{(i,j)}:=(\vf^{(i+1)}\dotsm\vf^{(j)})^{-1}E^{(i)}
 \text{ if }i<j.$$

Given an ordered unweighted Enriques diagram $(\I U,\,\theta)$ on $n+1$
vertices, say with proximity matrix $(p_{ij})$, let $E(\I U,\,\theta)
\subset F^{(n)}$
be the set of scheme points $\I t$ such that, on the fiber
$F^{(n+1)}_{\I t}$, for $1\le k\le n$, the divisors $\sum_{i=k}^{n+1}
p_{ik}  E^{(i,n+1)}_{\I t}$ are effective.
\end{dfn}

\begin{prp}\label{prpClsd}
Let $(\I U,\,\theta)$ be an ordered unweighted Enriques diagram.  Then
$E(\I U,\,\theta)$ is closed and contains $F(\I U,\,\theta)$
set-theoretically.
\end{prp}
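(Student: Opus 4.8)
The plan is to dispatch the set-theoretic containment first, since it is where the results already in hand do the work, and then to treat the closedness separately.

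For the containment, I would first prove the elementary proximity formula that, on the fiber $F^{(n+1)}_{\I t}$ over any $\I t$, the total transform of $E^{(k)}$ decomposes as $E^{(k,n+1)}_{\I t} = e^{(k,n+1)}_{\I t}E^{(k)}_{\I t} + \sum_{i\,:\,p_{ik}=-1} E^{(i,n+1)}_{\I t}$, where $e^{(k,n+1)}_{\I t}E^{(k)}_{\I t}$ is the strict transform. This I would establish by induction on the blowups: the pullback along $\vf^{(l+1)}_{\I t}$ carries the total transform of $E^{(k)}$ to the total transform, and carries the strict transform to the strict transform plus $m\,E^{(l+1)}_{\I t}$, where $m$ is the multiplicity at $T^{(l)}$ of the smooth curve $e^{(k,l)}_{\I t}E^{(k)}_{\I t}$; and $m$ is $1$ or $0$ according as the center $T^{(l)}$ does or does not lie on that curve (Lemma~\ref{lem:DerFam}), equivalently as $t_l$ is or is not proximate to $t_{k-1}$ (Definition~\ref{dfn:prox}), equivalently as $p_{(l+1)k}=-1$ or $0$. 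Rearranging gives $\sum_{i=k}^{n+1} p_{ik} E^{(i,n+1)}_{\I t} = e^{(k,n+1)}_{\I t}E^{(k)}_{\I t}$. Now for $\I t\in F(\I U,\theta)$, Theorem~\ref{thm:3-2} furnishes a strict sequence whose diagram is exactly $(\I U,\theta)$, so its proximities are encoded by the very matrix $(p_{ik})$; hence each such divisor equals the honest effective divisor $e^{(k,n+1)}_{\I t}E^{(k)}_{\I t}$ and is effective. Thus $\I t\in E(\I U,\theta)$.

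For the closedness, I would pass to the composite blowdown $g\: F^{(n+1)}\to F\x_Y F^{(n)}$, which is proper and birational, and along whose exceptional locus every $D_k:=\sum_{i=k}^{n+1}p_{ik}E^{(i,n+1)}$ is supported. The heart of the matter is that effectivity of $D_{k,\I t}$ on the fibers is a closed condition, and I would prove this by a negativity-lemma argument. The pushforward $\CJ_k:=g_*\CO_{F^{(n+1)}}(D_k)$ is an \emph{ideal} sheaf in $\CO_{F\x_Y F^{(n)}}$, because any local section is a rational function $f$ with $\mathrm{div}(f)+D_k\ge 0$ whose non-exceptional part forces $f$ to be regular, and the constant $1$ is a section exactly when $D_k\ge 0$. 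Thus, fiberwise, $\CJ_{k,\I t}=\CO$ if and only if $D_{k,\I t}$ is effective. The cosupport of $\CJ_k$ lies in the image $g(\mathrm{Exc})$, which is the union of the centers $T^{(j)}$ and hence proper over $F^{(n)}$; so the non-effectivity locus of $D_k$ is the image in $F^{(n)}$ of the closed set $\mathrm{Supp}(\CO/\CJ_k)$, which is proper over $F^{(n)}$ and therefore has closed image. Intersecting over $k=1,\dots,n$ shows $E(\I U,\theta)$ is closed.

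The step I expect to be the main obstacle is exactly this closedness, and within it the base-change bookkeeping: I must know that forming $g_*\CO(D_k)$ commutes with restriction to the fibers well enough that $\CJ_{k,\I t}=\CO$ really detects effectivity of $D_{k,\I t}$. This is delicate because the surfaces $F^{(n+1)}_{\I t}$ need not be proper, so naive semicontinuity of $h^0$ on the ambient family is unavailable. The way around it is that $g$ \emph{is} proper and that all the relevant geometry is concentrated on the exceptional locus and its image, the centers, which are proper over $F^{(n)}$; so I would run cohomology-and-base-change and the image-is-closed argument entirely on this proper locus rather than on the non-proper ambient family. By contrast the containment is essentially formal once the strict-transform identity above is in hand.
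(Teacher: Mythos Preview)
Your containment argument is essentially the paper's: both prove by induction on the blowups that $\sum_{i}p_{ik}E^{(i,n+1)}_{\I t}$ equals the strict transform $e^{(k,n+1)}_{\I t}E^{(k)}_{\I t}$ when $\I t\in F(\I U,\theta)$, hence is effective.

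For closedness, however, the paper takes a quite different route. It shows directly that $h^0\bigl(F^{(n+1)}_{\I t},\,\mc O(\wt E^{(k)}_{\I t})\bigr)\le 1$ for every $\I t$, with equality precisely when $\wt E^{(k)}_{\I t}$ is effective. The argument is a rigidity statement: if $D$ is effective and linearly equivalent to $\wt E^{(k)}_{\I t}$, then intersecting with the pullback $H$ of an ample class on $F_{\I t}$ forces every component of $D$ to be contracted, hence $D$ is a combination of the $E^{(i,n+1)}_{\I t}$, and then the intersection form pins down the coefficients as the $p_{ik}$, so $D=\wt E^{(k)}_{\I t}$. Once $E(\I U,\theta)$ is identified with the locus where $h^0\ge 1$ for all $k$, closedness follows from upper semicontinuity.

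What each approach buys: the paper's is short and conceptual, but both the rigidity step (intersection with an ample $H$) and the semicontinuity step tacitly use that the fibers $F^{(n+1)}_{\I t}$ are proper over the base---exactly the worry you flagged. Your pushforward-ideal approach along the proper blowdown $g\:F^{(n+1)}\to F\times_YF^{(n)}$ sidesteps this, at the cost of the base-change wrinkle you correctly isolate (one has to check that the fiberwise pushforward $g_{\I t*}\mc O(D_{k,\I t})$ really agrees with the restriction of $\CJ_k$). A middle path worth noting: since effectivity of $\wt E^{(k)}_{\I t}$ is determined entirely by the exceptional configuration, one may compactify $F/Y$ fiberwise without changing $E(\I U,\theta)$ and then run the paper's argument; this salvages semicontinuity while avoiding the heavier ideal-sheaf bookkeeping.
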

\begin{proof}
Say $\I U$ has $n+1$ vertices.  Fix $\I t\in F^{(n)}$ and $1\le k\le n$.
If $\I t\in F(\I U,\,\theta)$, then, as is easy to see by induction on
$j$ for $k\le j\le n$, the divisor $\sum_{i=k}^{j+1} p_{ik}
E^{(i,j+1)}_{\I t}$ is equal to the 
strict transform on $F^{(j+1)}_{\I
t}$ of $E^{(k)}_{\I t}$, in other words, to $e^{(k,j+1)}_TE^{(k)}_T$
where $T:=\Spec \kappa(\I t)$.  Hence $E(\I U,\,\theta)$ contains $F(\I
U,\,\theta)$.

Set $\wt E^{(k)}:=\sum_{i=k}^{n+1}p_{ik}E^{(i,n+1)}$.  Then $\tu
h^0\bigl(F^{(n+1)}_{\I t},\,\mc O(\wt E^{(k)}_{\I t})\bigr)\le 1$ for
any $\I t$, and equality holds if and only if $\I t\in F(\I
U,\,\theta)$, as the following essentially standard argument shows.
Plainly, it suffices to show that, if $\wt E^{(k)}_{\I t}$ is linearly
equivalent to an effective divisor $D$, then $\wt E^{(k)}_{\I t} = D$.

  Let $H$ be the preimage on $F^{(n+1)}_{\I t}$ of an ample divisor on
$F_{\I t}$.  Then the intersection number $\wt E^{(k)}_{\I t}\cdot H$
vanishes by the projection formula because each component of $\wt
E^{(k)}_{\I t}$ maps to a point in $F_{\I t}$.  So $D \cdot H$ vanishes
too.  Hence each component of $D$ must also map to a point in $F_{\I t}$
because $D$ is effective and $H$ is ample.  Hence $D$ is some linear
combination of the $E^{(i,n+1)}_{\I t}$ because they form a basis of the
group of divisors whose components each map to a point.  Furthermore,
the combining coefficients must be the $ p_{ik}$ because these
coefficients are given by the intersection numbers with the
$E^{(i,n+1)}_{\I t}$.  Thus $\wt E^{(k)}_{\I t} = D$.

Thus $E(\I U,\,\theta)$ is the set of $\I t\in F^{(n)}$ such that $\tu
h^0\bigl(F^{(n+1)}_{\I t},\,\mc O(\wt E^{(k)}_{\I t})\bigr)\ge 1$ for
all $k$.  Hence $E(\I U,\,\theta)$ is closed by semi-continuity
\cite[Thm.~(7.7.5), p.~67]{EGAIII2}.
\end{proof}

\begin{prp}\label{prpCtd}
Let $(\I U,\,\theta)$ and $(\I U',\,\theta')$ be two ordered unweighted
Enriques diagrams on $n+1$ vertices, and let $\I P$ and $\I P'$ be their
proximity matrices.  Then the following conditions are equivalent:
\begin{enumerate}
 \item The sets $F(\I U',\,\theta')$ and  $E(\I U,\,\theta)$ meet.
 \item The set $E(\I U',\,\theta')$ is contained in the set $E(\I
U,\,\theta)$.
 \item The matrix  $\I P'^{-1}\I P$ only has nonnegative entries.
\end{enumerate}
Furthermore, $E(\I U',\,\theta') = E(\I U,\,\theta)$ if and only if $(\I
U,\,\theta) \cong (\I U',\,\theta')$.
\end{prp}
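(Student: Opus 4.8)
The plan is to reduce the whole statement to one linear-algebra identity relating the two families of ``strict-transform'' combinations through the common total transforms, and then to read off effectivity fiber by fiber. Both $\I P$ and $\I P'$ are lower-triangular with $1$'s on the diagonal, hence unimodular over $\bb Z$, so $(\I P')^{-1}\I P$ is a lower-triangular integer matrix with $1$'s on the diagonal. For a scheme point $\I t\in F^{(n)}$ set $\wt E^{(k)}_{\I t}:=\sum_{i=k}^{n+1}p_{ik}E^{(i,n+1)}_{\I t}$ and $\wt E'^{(k)}_{\I t}:=\sum_{i=k}^{n+1}p'_{ik}E^{(i,n+1)}_{\I t}$. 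Since the total transforms $E^{(i,n+1)}_{\I t}$ form a $\bb Z$-basis of the group of divisors on $F^{(n+1)}_{\I t}$ whose components map to points of $F_{\I t}$ (as already used in the proof of Proposition~\ref{prpClsd}), I can invert the relation $\wt E'^{(j)}_{\I t}=\sum_i p'_{ij}E^{(i,n+1)}_{\I t}$ and substitute to obtain the divisor identity
$$\wt E^{(k)}_{\I t}=\sum_{j}\bigl((\I P')^{-1}\I P\bigr)_{jk}\,\wt E'^{(j)}_{\I t},$$
valid at \emph{every} $\I t$. This identity is the engine of the argument.

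Next I would prove the cycle $(2)\Rightarrow(1)\Rightarrow(3)\Rightarrow(2)$. For $(2)\Rightarrow(1)$: Proposition~\ref{prpClsd} gives $F(\I U',\,\theta')\subseteq E(\I U',\,\theta')$, and $F(\I U',\,\theta')$ is nonempty by Theorem~\ref{thm:3-2}; so if $E(\I U',\,\theta')\subseteq E(\I U,\,\theta)$ then the two sets in $(1)$ meet. For $(1)\Rightarrow(3)$: pick $\I t$ in $F(\I U',\,\theta')\cap E(\I U,\,\theta)$. On $F(\I U',\,\theta')$ the divisors $\wt E'^{(j)}_{\I t}$ are the honest strict transforms $e^{(j,n+1)}E^{(j)}$ (again by the computation in Proposition~\ref{prpClsd}), that is, the $n+1$ distinct prime exceptional curves; hence a $\bb Z$-combination of them is effective exactly when all its coefficients are $\ge 0$. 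Since $\I t\in E(\I U,\,\theta)$, each $\wt E^{(k)}_{\I t}$ with $1\le k\le n$ is effective, so the displayed identity forces $\bigl((\I P')^{-1}\I P\bigr)_{jk}\ge 0$; the column $k=n+1$ is automatically nonnegative by triangularity, giving $(3)$. For $(3)\Rightarrow(2)$: take any $\I s\in E(\I U',\,\theta')$. Then every $\wt E'^{(j)}_{\I s}$ is effective (by definition for $j\le n$, and $\wt E'^{(n+1)}_{\I s}=E^{(n+1)}_{\I s}$ always), so by the identity and $(3)$ each $\wt E^{(k)}_{\I s}$ is a nonnegative integer combination of effective divisors, hence effective; thus $\I s\in E(\I U,\,\theta)$.

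Finally I would settle the last clause. If $(\I U,\,\theta)\cong(\I U',\,\theta')$, then $\I P=\I P'$ (an ordered diagram is determined up to unique isomorphism by its proximity matrix, Subsection~\ref{sb-Ud}), so the two loci are cut out by identical conditions and coincide. Conversely, $E(\I U',\,\theta')=E(\I U,\,\theta)$ gives both inclusions, so by $(2)\Leftrightarrow(3)$ both $M:=(\I P')^{-1}\I P$ and $M^{-1}=\I P^{-1}\I P'$ have nonnegative entries. Writing $M=I+N$ and $M^{-1}=I+N'$ with $N,N'$ strictly lower triangular and $\ge 0$, the relation $MM^{-1}=I$ reads $N+N'+NN'=0$, whence $N'=-N(I+N')=-NM^{-1}\le 0$; combined with $N'\ge 0$ this forces $N'=0$ and then $N=0$. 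Hence $\I P=\I P'$, and so $(\I U,\,\theta)\cong(\I U',\,\theta')$.

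The step I expect to be the main obstacle is $(1)\Rightarrow(3)$: it is the only place where I must pass from effectivity of an abstract integer combination of the (reducible) total transforms to nonnegativity of individual matrix entries, and this hinges on correctly identifying, on $F(\I U',\,\theta')$, the $\wt E'^{(j)}$ with the genuine prime exceptional divisors, so that ``effective'' becomes equivalent to ``nonnegative coefficients.'' Everything else is either the basis identity above or the short unipotent-matrix computation in the last clause.
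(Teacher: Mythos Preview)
Your proposal is correct and follows essentially the same route as the paper: you set up the same change-of-basis identity $\wt E^{(k)}=\sum_j\bigl((\I P')^{-1}\I P\bigr)_{jk}\wt E'^{(j)}$, run the same cycle $(2)\Rightarrow(1)\Rightarrow(3)\Rightarrow(2)$ with the same justifications (including the key observation that on $F(\I U',\,\theta')$ the $\wt E'^{(j)}$ are the distinct prime exceptional curves), and for the final clause you give the same mutual-nonnegativity-of-inverse-unipotents argument, only spelled out a bit more explicitly than the paper's one-line ``hence both are the identity.''
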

\begin{proof}
Fix $\I t\in F^{(n)}$, and define two sequences 
of divisors on
$F^{(n+1)}_{\I t}$ by these matrix equations:
\begin{align*}
	(\wt E^{(1)}_{\I t},\dotsc,\wt E^{(n+1)}_{\I t})
	 &= (E^{(1,n+1)}_{\I t},\dotsc, E^{(n+1,n+1)}_{\I t})\I P;\\
	(\wt E^{(1)}_{\I t}{}',\dotsc,\wt E^{(n+1)}_{\I t}{}')
     &= (E^{(1,n+1)}_{\I t},\dotsc, E^{(n+1,n+1)}_{\I t})\I P'.
\end{align*}
These two equations imply the following one:
\begin{equation}\label{equationCtd1}
	(\wt E^{(1)}_{\I t},\dotsc,\wt E^{(n+1)}_{\I t})
= (\wt E^{(1)}_{\I t}{}',\dotsc,\wt E^{(n+1)}_{\I t}{}')\I P'^{-1}\I P;
\end{equation}
in other words, $\wt E^{(j)}_{\I t}=\sum_{k=1}^{n+1}q_{kj}\wt
E^{(k)}_{\I t}{}'$ where say $(q_{kj}):=\I P'^{-1}\I P$.

Suppose $\I t\in F(\I U',\,\theta')$.  Then $\wt E^{(k)}_{\I t}{}'$ is
the proper transform on $F^{(n+1)}_{\I t}$ of $E^{(k)}_{\I t}$, as we
noted at the beginning of the proof of Proposition~\ref{prpClsd}.  So
the $\wt E^{(k)}_{\I t}{}'$ form a basis of the group of divisors whose
components each map to a point in $F_{\I t}$.  Hence,
by~(\ref{equationCtd1}), if $\wt E^{(j)}_{\I t}$ is effective, then
$q_{kj}\ge0$ for all $k$.  Thus (1) implies (3).

Suppose $\I t\in E(\I U',\,\theta')$.  Then $\wt E^{(k)}_{\I t}{}'$ is
effective.  Suppose too $q_{kj}\ge0$ for all $k, j$.  Then $\wt
E^{(j)}_{\I t}$ is effective for all $j$ by (\ref{equationCtd1}).  So
$\I t\in E(\I U,\,\theta)$.  Thus (3) implies (2).

By Proposition~\ref{prpClsd}, $E(\I U,\,\theta)$ contains $F(\I
U,\,\theta)$.  By Theorem~\ref{thm:3-2}, $F(\I U,\,\theta)$ is nonempty.
Thus (2) implies (1).  So (1), (2), and (3) are equivalent.

Furthermore, suppose $E(\I U',\,\theta') = E(\I U,\,\theta)$.  Then both
$\I P'^{-1}\I P$ and $\I P^{-1}\I P'$ have nonnegative entries since (2)
implies (3).  But each matrix is the inverse of the other, and both are
lower triangular.  Hence both are the identity.  So $\I P' =\I P$;
whence, $(\I U,\,\theta) \cong (\I U',\,\theta')$.  The converse is
obvious.  Thus the proposition is proved.
\end{proof}

\section{The Hilbert scheme}\label{sc:Hilb}
Fix a smooth family of geometrically irreducible surfaces $\pi\:F\to Y$.
In this section, we prove our main result, Theorem~\ref{thmF2H}.  It
asserts that, given an Enriques diagram $\I D$ and an ordering $\theta$,
there exists a natural map $\Psi$ from the quotient $F(\I D,\,\theta)
\big/{\Aut}(\I D)$ into the Hilbert scheme $\Hilb^d_{F/Y}$ with
$d:=\deg\I D$ and with $F(\I D,\,\theta):=F(\I U,\,\theta)$ where $\I U$
is the unweighted diagram underlying $\I D$.

The quotient $F(\I D,\,\theta) \big/{\Aut}(\I D)$ parameterizes the
strict sequences of arbitrarily near points of $F/Y$ with diagram $(\I
U,\,\theta)$, up to automorphism of $\I D$.  The image of $\Psi$
parameterizes the (geometrically) complete ideals of $F/Y$ with diagram
$\I D$.  The map $\Psi$ is universally injective.  In fact, $\Psi$ is an
embedding in characteristic $0$.  However, in positive characteristic,
$\Psi$ can be purely inseparable; Appendix B discusses  examples found
by Tyomkin. 

 We close this section with Proposition~\ref{prpM}, which addresses the
 important special case where every vertex of $\I D$ is a root; here,
 $\Psi$ is an embedding in any characteristic.  Further, other
 examples in Appendix B show that $\Psi$ can remain an embedding even
 after a nonroot is added.

\begin{sbs}[Geometrically complete ideals] \label{sb:ci}
 Let $K$ be a field, $(t_0,\dotsc,t_n)$ a sequence of arbitrarily near
$K$-points of $F/Y$.  Since $\Spec(K)$ consists of a single reduced
point, the sequence is strict.  Let $(\I U, \theta)$ be its diagram in
the sense of Definition~\ref{dfn:assdiag}.

Suppose $\I U$ underlies an Enriques diagram $\I D$, say with weights
$m_V$ for $V\in \I U$.  Using the divisors $E^{(i,n+1)}_K$ on
$F^{(n+1)}_K$ of Definition~\ref{dfnEff}, set
	$$E_K:=\tsum_Vm_VE^{(\theta(V)+1,\,n+1)}_K \text{ and }
	 \mc L_K:=\mc O_{F^{(n+1)}_K}(-E_K).$$
Given $V\in \I U$, set $j:=\theta(V)$ and $D_V:=e^{(j+1,n+1)}_KE^{(j+1)}_K$.
Inspired by Lipman's remark  \cite[p.~306]{Li94},
let's compute the intersection number $-(E_K\cdot D_V)$, that is,
$\deg(\mc L|D_V)$.  Plainly, $(E^{(j+1,\,n+1)}_K\cdot D_V)=-1$.  And, for
  $W\neq V$, plainly $(E^{(\theta(W)+1,\,n+1)}_K\cdot D_V)$ is equal to $1$ if
$W\succ V$, and to $0$ if not.  Hence $-(E_K\cdot D_V)$ is equal to
$m_V-\tsum_{W\succ V}m_W$, which is at least $0$  by the Proximity
Inequality. 

Set $\vf_K:=\vf^{(1)}_K\dotsb\vf^{(n+1)}_K$, and form $\mc
I:=\vf_{K*}\mc L_K$ on $F_K$.  Then $\mc I$ is a complete ideal, one
that is integrally closed; also, $\mc I\mc O_{F^{(n+1)}_K}=\mc L_K$ and
$R^q\vf_{K*}\mc L_K =0$ for $q\ge1$.  These three statements hold since
$(E_K\cdot D_V)\le0$ for all $V$ and, as is well known, $R^q\vf_{K*}\mc
O_{F^{(n+1)}_K} = 0$ for $q\ge1$; see Lipman's discussion \cite[\S18,
p.~238]{Li69} and his Part~(ii) of \cite[Thm.~(12.1), p.~220]{Li69};
also see Deligne's Th\'eor\`eme 2.13 \cite[p.~22]{D73}.  Furthermore,
	$$\dim_K\tu H^0(\mc O_{F_K}/\mc I) = d
 \text{\quad where } d:=\deg \I D.$$
This formula is a modern version of Enriques' formula \cite[Vol.~II,
p.~426]{EC15}; it was proved in different ways independently by Hoskin
\cite[5.2, p.~85]{Ho56}, Deligne \cite[2.13, p.~22]{D73}, and Casas
\cite[6.1, p.~438]{Ca90}; Hoskin and Deligne worked in greater
generality, Casas worked over $\bb C$.

The $m_V$ are determined by $\mc I$ because the divisors $E^{(i,n+1)}_K$
are numerically independent; their intersection numbers with divisors
are defined because they are complete.  The $m_V$ may be found as
follows.  Let $\mc P$ be the ideal of the image $T^{(0)}$ of $t_0$,
which is a $K$-point of $F_K$.  Let $m$ be the largest integer such that
$\mc P^m\supset \mc I$.  Then $m =m_V$ where $V:=\theta^{-1}(0)$, since
$\mc P\mc O_{F^{(n+1)}_K}=\mc O_{F^{(n+1)}_K}(-E^{(1, n+1)}_K)$.  Note
in passing that $\mc P$ is a minimal prime of $\mc I$ since $m_V\ge1$.

The remaining $m_W$ can be found by recursion.  Indeed, on $F^{(1)}_K$,
form the ideal $\mc I':=\mc I\mc O(m_VE^{(1)})$.  Then $\mc I'$ is the
direct image from $F^{(n+1)}_K$ of $\mc O(-E_K')$ where $E_K':=
  \tsum_{W\neq V}m_WE^{(\theta(W)+1,\,n+1)}_K$.  Hence $\mc I'$ is the
complete ideal associated to the sequence $(t_1,\dotsc,t_n)$ of
arbitrarily near $K$-points of $F^{(1)}/Y$ and to the ordered Enriques
diagram $(\I D', \theta')$ where $\I D':=\I D-V$ and
$\theta'(W):=\theta(W)-1$.
  
The ideal $\mc I$ determines the diagram $\I D$.  Indeed, for $0\le i\le
n$, let $A_i,\,\I m_i $ be the local ring of the surface $F^{(i)}_K$ at
the $K$-point that is the image of $t_i$.  Then according to Lipman's
preliminary discussion in \cite[p.~294--295]{Li94}, the set $\{A_i\}$
consists precisely of 2-dimensional regular local $K$-domains whose
fraction field is that of $F_K$ and whose maximal ideal contains the
stalk of $\mc I$ at some point of $F_K$.  Furthermore, $t_i$ is
proximate to $t_j$ if and only if $A_i$ is contained in the ring of the
valuation $v_j$ defined by the formula: $v_j(f):=\max\{m\mid f\in \I
m_j^m\}$.  Finally, if $W:=\theta^{-1}(j)$, then the weight $m_W$ is the
largest integer $m$ such that $\I m_j^m$ contains the appropriate stalk
of $\mc I$.

Let $\mc J$ be an arbitrary ideal on $F_K$ of finite colength.  Let
$L/K$ be an arbitrary field extension.  If the extended ideal $\mc J_L$
on $F_L$ is complete, then $\mc J$ is complete, and the converse holds
if $L/K$ is separable; see Nobile and Villamayor's proof of
\cite[Prp.~(3.2), p.~251]{NV97}.   Let us say that $\mc J$ is {\it
geometrically complete\/} if $\mc J_L$ on $F_L$ is complete for every
$L$, or equivalently, for some algebraically closed $L$.  In
characteristic $0$, if $\mc J$ is complete, then it is geometrically
complete.

The extended ideal $\mc I_L$ on $F_L$ is, plainly, the complete ideal
associated to the extension of the sequence $(t_0,\dotsc,t_n)$ and to
the same ordered Enriques diagram $(\I D, \theta)$.  Hence $\mc I$ is
geometrically complete.

Suppose that $K$ is algebraically closed.  Suppose that $\mc J$ is
complete and that $\dim_K\tu H^0(\mc O_{F_K}/\mc J)$ is finite and
nonzero.  Then $\mc J$ arises from some sequence $(s_0,\dotsc,s_n)$ and
some ordered Enriques diagram.  Indeed, choose a minimal
prime $\mc P$ of $\mc J$.  Then $K\risom\mc O_{F_K}/\mc P$ since $K$ is
algebraically closed.  Hence $\mc P$ defines a $K$-point $S^{(0)}$ of
$F_K$, so a section $s_0$ of $F_K/K$.  Set $m_0:=\max\{\,m\mid\mc
P^m\supset \mc J\,\}$.

Let $F'_K$ be the blowup of $F_K$ at $S^{(0)}$, and $E'_K$ the
exceptional divisor.  Set $\mc J':=\mc J\mc O_{F'_K}(m_0E'_K)$.  Then
$\mc J'$ is complete by Zariski and Samuel's \cite[Prp.~5, p.~381]{ZS}.
If $\mc J'=\mc O_{F'_K}$, then stop.  If not, then repeat the process
again and again, obtaining a sequence $(s_0,s_1,\dotsc)$.  Only finitely
many repetitions are necessary because, as Lipman \cite[p.~295]{Li94}
points out, the local ring of $F^{(i)}_K$ at $S^{(i)}$ is dominated by a
Rees valuation of $\mc J$, that is, the valuation associated to an
exceptional divisor of the normalized blowup of $\mc J$.  Then $\mc J'$
arises from the sequence of $s_i$ weighted by the $m_{\theta^{-1}(i)}$
owing to Lipman's \cite[prp.~(6.2), p.~208]{Li69} and discussion
before it.
\end{sbs}

\begin{lem}\label{lemDVR1}
 Let $A$ be a discrete valuation ring, set $T:=\Spec A$, and denote by
$\eta\in T$ the generic point and by $y\in T$ the closed point.  Fix a
map $T\to Y$.  Let $\I D$ be an Enriques diagram, say with $n+1$
vertices, and $\mc I$ a coherent ideal on $F_T$ that generates
geometrically complete ideals on $F_\eta$ and $F_y$, each with diagram
$\I D$.  Let $\theta$ be an ordering of $\I D$, and $\tilde{\I t}$ a
$k(\eta)$-point of $F(\I D,\,\theta)$ such that $\mc I_\eta$ generates
an invertible sheaf on  $F^{(n+1)}_\eta$.  Then
$\tilde{\I t}$ extends to a $T$-point $\I t$ of $F(\I D,\,\theta)$.
\end{lem}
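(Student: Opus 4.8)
The plan is to induct on $n$, extending the centers one at a time while reducing the diagram by the recursion of Subsection~\ref{sb:ci}. The crux is to extend the first center. The point $\tilde{\I t}$ represents a strict sequence $(\tilde t_0,\dots,\tilde t_n)$ of arbitrarily near $k(\eta)$-points, and $\tilde T^{(0)}$, the image of $\tilde t_0$, is a $k(\eta)$-rational point that is a minimal prime of $\mc I_\eta$. Set $N:=\mc O_{F_T}/\mc I$, and let $W$ be the scheme-theoretic closure of $\tilde T^{(0)}$ in $F_T$. As the closure over a discrete valuation ring of a subscheme of the generic fibre, $W$ is $T$-flat with $W_\eta=\tilde T^{(0)}\risom\Spec k(\eta)$; and $W$ lies in the support of $N$, since $\tilde T^{(0)}$ does.

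The key step is to show that the support of $N$ is finite over $T$. Because $\mc I_\eta$ and $\mc I_y$ have the same finite colength $d=\deg\I D$, the quotient $N$ is $T$-flat, so the uniformizer of $A$ is a nonzerodivisor on $N$ and every associated prime of $N$ meets the generic fibre. Hence the support of $N$ is quasi-finite over $T$, all of its closed points lying over $y$. At such a closed point $w$, the local ring of the support is a one-dimensional local domain, essentially of finite type and flat over the excellent discrete valuation ring $A$, with Artinian special fibre; after completion it is finite over $\widehat A$ by the finiteness of integral closures, and finiteness descends along $A\to\widehat A$. Thus the support, and a fortiori $W$, is finite over $T$. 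Being finite and $T$-flat with generic fibre of degree $1$, the map $W\to T$ is an isomorphism, so $W$ is the image of a section $t_0$ of $F_T/T$ extending $\tilde t_0$.

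Next I would run the recursion. Blowing up $F_T$ along $t_0$ yields $F^{(1)}_T$, and this commutes with the base changes to $\eta$ and $y$ by the first observation in the proof of Proposition~\ref{prp-Har}. Put $V_0:=\theta^{-1}(0)$, a root, set $m_0:=m_{V_0}$, and form $\mc I':=\mc I\,\mc O_{F^{(1)}_T}(m_0E^{(1)}_T)$. Since $t_0(y)$ is the specialization of the minimal prime $\tilde T^{(0)}$ under the finite flat map $W\to T$, it is a root of $\mc I_y$, of weight $m_0$ because $\mc I_y$ has diagram $\I D$; so, applying the recursion of Subsection~\ref{sb:ci} on each fibre, $\mc I'$ is a coherent ideal generating geometrically complete ideals with diagram $\I D':=\I D-V_0$ on both $F^{(1)}_\eta$ and $F^{(1)}_y$. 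Moreover $\tilde{\I t}$ induces a $k(\eta)$-point $\tilde{\I t}'$ of $F(\I D',\,\theta')$, where $\theta'(W):=\theta(W)-1$, and $\mc I'_\eta$ still generates an invertible sheaf on the final blowup $F^{(n+1)}_\eta$. The inductive hypothesis, applied to the smooth family $F^{(1)}_T/T$, then extends $\tilde{\I t}'$ to a strict sequence $(t_1,\dots,t_n)$ of arbitrarily near $T$-points of $F^{(1)}_T/T$ with diagram $(\I U',\,\theta')$.

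Finally I would assemble $\I t:=(t_0,t_1,\dots,t_n)$, a $T$-point of $F^{(n)}$ restricting to $\tilde{\I t}$ over $\eta$, and check that it lands in the subscheme $F(\I U,\,\theta)$; equivalently, that its special fibre is strict with diagram $(\I U,\,\theta)$. Strictness holds by construction, and a proximity to $t_0$ holding over $\eta$ persists over $y$ because condition~(b) of Definition~\ref{dfn:sseq} is closed; conversely no new proximity can appear over $y$, since $\mc I_y$ has diagram $\I D$, which has exactly as many proximities as the generic fibre. Hence the special fibre also has diagram $(\I U,\,\theta)$, so $\I t$ is a $T$-point of $F(\I D,\,\theta)$ extending $\tilde{\I t}$; the base case $n=0$ is just the extension of $t_0$ carried out above. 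I expect the finiteness of the support of $N$ over $T$ — the properness hidden in the valuative criterion — to be the main obstacle; the remainder is fibrewise complete-ideal bookkeeping together with the base-change compatibility of blowing up.
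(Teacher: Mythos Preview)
Your inductive strategy has a genuine gap at the claim that $\mc I'_y$ has diagram $\I D':=\I D-V_0$. You correctly identify $t_0(y)$ as a minimal prime of $\mc I_y$, and $\mc I_y$ does have diagram $\I D$; but nothing tells you \emph{which} abstract root of $\I D$ the point $t_0(y)$ realizes. If $\I D$ has another root $V_j$ with $m_{V_j}=m_0$ but a nonisomorphic subtree, the closure of $\tilde T^{(0)}$ could specialize to the point corresponding to $V_j$, and then $\mc I'_y$ has diagram $\I D-V_j\not\cong\I D-V_0$, so the induction hypothesis (which needs both fibres to carry the \emph{same} diagram) fails. The paper flags exactly this danger in its parenthetical remark: roots of equal weight may be interchanged under specialization. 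Even your preliminary assertion that $t_0(y)$ has weight exactly $m_0$ is not justified; a flatness argument gives $\ge m_0$, but equality needs the global count $\sum_R n_R=\sum_R m_R$ over all roots at once.

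The paper repairs this by blowing up the closures of \emph{all} the generic roots simultaneously and inducting on $\I D':=\I D-\{\text{all roots}\}$, which is canonically determined and hence is the diagram on both fibres regardless of how the roots permute. Reassembling the full sequence then requires interleaving the root sections with the inductively obtained non-root sequence according to $\theta$, and verifying that the resulting $\I t_y$ lies in $F(\I D,\theta)$ rather than merely in some $F(\I D,\phi)$; this is handled by first reducing (via Proposition~\ref{prpIso}) to the case where $E(\I D,\theta)$ is minimal among the $E(\I D,\theta')$ and then invoking Proposition~\ref{prpCtd}. Your closing proximity-count (``no new proximity since $\mc I_y$ has diagram $\I D$'') does not accomplish this: it conflates the diagram of $\mc I_y$ with the diagram of your constructed sequence $(t_0,\dots,t_n)|_y$, and these need not agree a priori.
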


\begin{proof}
Let $\theta'$ be a second ordering.  By the construction of the
isomorphism $\Phi_{\theta,\theta'}$ in the proof of
Proposition~\ref{prpIso}, a $T$-point of $F(\I D,\,\theta)$ corresponds
to the $T$-point of $F(\I D,\,\theta')$ given by Lemma~\ref{lemAut} with
$\alpha:=\theta'\circ\theta^{-1}$.  Moreover, the lemma says that
$F_T^{(n+1)}$ is unchanged.  It follows that, to construct $\I t$, we
may replace $\theta$ by $\theta'$.  Thus we may assume that $E(\I
D,\,\theta)$ is a minimal element among the various closed subsets $E(\I
D,\,\theta')$ of $F^{(n)}$.

Let $R\in \I D$ be a root, and temporarily set $i:=\theta(R)$.  Say
$\tilde {\I t}$ corresponds to the sequence of blowups
$F^{(j+1)}_\eta\to F^{(j)}_\eta$ with centers $\eta_j$.  The image of
$\eta_i$ in $F_T$ is a $k(\eta)$-point; denote its closure by $T_R$.
Since $A$ is a discrete valuation ring, the structure map is an
isomorphism $T_R\risom T$.

Let $Z\subset F_T$ be the subscheme with ideal $\mc I$.  Its fibers
$Z_\eta$ and $Z_y$ are finite, and both have degree $\deg(\I D)$ since
the two ideals are geometrically complete with diagram $\I D$ by
hypothesis.  Since $T$ is reduced, $Z$ is $T$-flat.

As $R$ varies, the points $(T_R)_\eta$ are exactly the components of
$Z_\eta$ again because its ideal $\mc I_\eta$ is geometrically complete
with diagram $\I D$.  Hence the several $T_R$ are just the components of
$Z$ that meet $Z_\eta$.  But every component of $Z$ meets $Z_\eta$ since
$Z$ is $T$-flat.  Thus the $T_R$ are the the components of $Z$.

Since $T_R\risom T$ for each $R$, the fiber $(T_R)_y$ is a single point,
so a component of the discrete set $Z_y$.  The number of $T_R$ is the
number of roots of $\I D$, which is also the number of points of $Z_y$.
Hence the several $T_R$ are disjoint.

 Given $R$, let $m_R$ be its weight, $\mc P_R$ the ideal of $T_R$ in
$F_T$. Then $(\mc P_R^{m_R})_\eta\supset \mc I_\eta$.  Let's see that
$\mc P_R^{m_R}\supset \mc I$.  Indeed, form the image, $\mc M$ say, of
$\mc I$ in $\mc O_{F_T}\big/\mc P_R^{m_R}$.  Then $\mc M_\eta=0$.  Let
$u\in A$ be a uniformizing parameter.  Then $\mc M$ is annihilated by a
power of $u$.  Now, $\mc P_R$ is quasi-regular by \cite[(17.12.3),
p.~83]{EGAIV4} since $T_R\risom T$ and $F_T$ is $T$-smooth.  Hence $\mc
P_R^j/P_R^{j+1}$ is $T$-flat for all $j$ by \cite[(16.9.4),
p.~47]{EGAIV4}.  Hence $\mc O_{F_T}\big/\mc P_R^{m_R}$ is $T$-flat.  So
$u$ is a nonzerodivisor on $\mc O_{F_T}\big/\mc P_R^{m_R}$.  Hence $\mc
M=0$.  Thus $\mc P_R^{m_R}\supset \mc I$.

Let $n_R$ be the largest integer such that $(\mc P_R^{n_R})_y\supset \mc
I_y$.  Then $n_R\ge m_R$.  Now, $\mc I_y$ is geometrically complete with
diagram $\I D$.  Hence $n_R$ is the weight of the root corresponding to
$(\mc P_R)_y$.  Hence $\sum_Rn_R=\sum_Rm_R$.  But $n_R\ge m_R$.
Therefore, $n_R= m_R$ for every root $R$.

Let $\I D'$ be the diagram obtained from $\I D$ by omitting the roots.
Let $\theta'$ be the ordering of $\I D'$ induced by $\theta$; namely,
$\theta'(V):= \theta(V)-r_V$ where $r_V$ denotes the number of roots $R$
of $\I D$ such that $\theta(R)<\theta(V)$.  Let $F'_T$ be obtained from
$F_T$ by blowing up $\bigcup T_R$, and for each $R$, let $E'_R$ be the
preimage of $T_R$.  Set
  $$\mc I':= \mc I\mc O_{F'_T}\bigl(\textstyle\sum_R m_RE'_R\bigr).$$
Finally, let $n'$ be the number of vertices of $\I D'$.

Then $\mc I'$ generates geometrically complete ideals on $F'_\eta$ and
$F'_y$, each with diagram $\I D'$ owing to the theory of geometrically
complete ideals over a field; see Subsection~\ref{sb:ci}.  (To ensure
that the ideals on $F'_\eta$ and $F'_y$ have the same diagram, it is
necessary to omit all the roots of $\I D$.  Indeed, $\I D$ might have
two roots with the same multiplicity, but the diagram obtained by
omitting one root might differ from that obtained by eliminating the
other.  Conceivably, the two roots get interchanged under the
specialization.)

  Plainly, $\tilde{\I t}$ induces a $k(\eta)$-point $\tilde{\I t}'$ of
$F(\I D',\,\theta')$ such that $\mc I'_\eta$ generates an invertible
sheaf on the corresponding $F^{\prime(n'+1)}_\eta$, which is equal to
$F^{(n+1)}_\eta$.  Hence, by induction on $n$, we may assume that
$\tilde{\I t}'$ extends to $T$-point $\I t'$ of $F(\I D',\,\theta')$
such that, on the corresponding scheme $F^{\prime(n'+1)}_T$, the ideal
$\mc I'$ generates an invertible ideal.  It remains to show that $\I t'$
and the several isomorphisms $T_R\risom T$ yield an extension $\I t$ of
$\tilde{\I t}$.

Proceed by induction on $i$ where $0\le i\le n$.  Suppose we have
constructed a sequence $(t_0,\dotsc,t_{i-1})$ extending the sequence
$(\tilde t_0,\dotsc,\tilde t_{i-1})$ coming from $\tilde{\I t}$; suppose
also that, if we blow up $F^{(i)}_T$ along the preimage of
$\bigcup_{k\ge i} T_k$, then we get $F^{\prime(i')}_T$ where, for $0\le
j\le n$, we let $j'$ denote $j$ diminished by the number of roots $R$ of
$\I D$ such that $\theta(R)<j$.  Note that the base case $i:=0$ obtains:
the sequence $(t_0,\dotsc,t_{i-1})$ is empty; furthermore,
$F^{(i)}_T=F_T$ and $F^{\prime(i')}_T = F'_T$, which is the blowup of
$F_T$ along $\bigcup_{k\ge i} T_k$.

Note that $F^{(i)}_T\to F_T$ is an isomorphism off $\bigcup_{k< i} T_k$.
Indeed, given $j<i$, let $R'\in \I D$ be the root preceding
$\theta^{-1}(j)$, and set $k:=\theta(R')$.  Since $\theta$ is an
ordering, $k\le j$.  Since $(t_0,\dotsc,t_{i-1})$ extends $(\tilde
t_0,\dotsc,\tilde t_{i-1})$, the image of $T^{(j)}_\eta$ in $F_T$ is
just $(T_k)_\eta$.  So $T^{(j)}$ maps into $T_k$, and $k<i$.

Set $V:=\theta^{-1}(i)\in \I D$.  First suppose $V$ is a root of $\I D$.
Then $(i+1)'=i'$.  Also, $T_i$ is defined, and the isomorphism
$T_i\risom T$ provides a section $t_i$ of $F^{(i)}_T$ owing to the
preceding note.  By the same token, the blowup of $F^{(i+1)}_T$ along
the preimage of $\bigcup_{k\ge i+1} T_k$ is equal to the blowup of
$F^{(i)}_T$ along the preimage of $\bigcup_{k\ge i} T_k$.  But the
latter blowup is equal to $F^{\prime(i')}_T$.  It follows that $t_i$
does the trick.

Next suppose $V$ is not a root, so $V\in \I D'$.  Also $\bigcup_{k\ge i}
T_k = \bigcup_{k\ge i+1} T_k$.  Now, by the induction assumption,
$F^{\prime(i')}_T$ is equal to $F^{(i)}_T$ off the preimage of
$\bigcup_{k\ge i} T_k$.  Take $t_i:=t_i'$ where $(t_0',\dotsc,t_i')$
comes from $\I t'$. It is not hard to see that $t_i$ does the trick.

It is not immediately obvious that $(t_0,\dotsc,t_n)$ is strict, even
though $(t_0',\dotsc,t_{n'}')$ is strict.  However, $\I t$ is a
$T$-point of $F^{(n)}(T)$ and $\I t_\eta$ is a $k(\eta)$-point of $F(\I
D,\,\theta)$; furthermore, $\I t_y$ is a $k(y)$-point of $F(\I
D,\,\phi)$ for some ordering $\phi$ of $\I D$.  Since $T$ is
irreducible, $\I t_y$ is a point of the closure of $F(\I D,\,\theta)$ in
$F^{(n)}$, so is a point of $E(\I D,\,\theta)$.  Hence $E(\I
D,\,\theta)$ contains $E(\I D,\,\phi)$ by Proposition~\ref{prpCtd}.
But, by the initial reduction, $E(\I U,\,\theta)$ is minimal, so equal
to $E(\I D,\,\phi)$.  Hence $(\I D,\,\theta) \cong (\I D,\,\phi)$ again
by Proposition~\ref{prpCtd}.  So $\I t_y$ is a point of $F(\I
D,\,\theta)$.  Since $T$ is reduced, $\I t$ is therefore a $T$-point of
$F(\I D,\,\theta)$, as desired.
\end{proof}

\begin{dfn}\label{dfnCI}
Given an Enriques diagram $\I D$, say with $d:=\deg\I D$, let $H(\I
D)\subset \Hilb^d_{F/Y}$ denote the subset parameterizing the
geometrically complete ideals with diagram $\I D$ on the geometric
fibers of $F/Y$; see Subsection~\ref{sb:ci}.
\end{dfn}

\begin{prp}\label{prpF2H}
Let $\I D$ be an Enriques diagram, set $d:=\deg\I D$, and choose an
ordering $\theta$.  Then there exists a natural map $\Upsilon_\theta\:F(\I
D,\,\theta)\to \Hilb^d_{F/Y}$, whose formation commutes with base
extension of $Y$.  Its image is $H(\I D)$, and it factors into a finite
map $F(\I D,\,\theta)\to U$ and an open embedding $U\into
\Hilb^d_{F/Y}$.  Moreover, $\Upsilon_\theta= \Upsilon_{\theta'}\circ
\Phi_{\theta,\theta'}$ for any second ordering $\theta'$.
\end{prp}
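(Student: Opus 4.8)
The plan is to define $\Upsilon_\theta$ directly on $T$-points by the pushforward construction of the introduction, worked out over a field in Subsection~\ref{sb:ci}, to read off the image and the two compatibilities from that same subsection together with Lemma~\ref{lemAut}, and finally to produce the factorization from Zariski's Main Theorem, using Lemma~\ref{lemDVR1} as the valuative input that cuts out the open set $U$.

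First I would construct the morphism. A $T$-point of $F(\I D,\theta)$ is, by Theorem~\ref{thm:3-2}, a strict sequence $(t_0,\dots,t_n)$ with diagram $(\I U,\theta)$, and hence a tower of blowups up to $F^{(n+1)}_T$. On $F^{(n+1)}_T$ I form the effective divisor $E_T:=\tsum_V m_V E^{(\theta(V)+1,\,n+1)}_T$, the invertible sheaf $\mc L_T:=\mc O_{F^{(n+1)}_T}(-E_T)$, and the ideal $\mc I:=\vf_{T*}\mc L_T$, exactly as in Subsection~\ref{sb:ci}. The one substantive point is that $\mc O_{F_T}/\mc I=\vf_{T*}\mc O_{E_T}$ is $T$-flat of relative length $d$ and that its formation commutes with base change; this is what the generalized property of exchange of Appendix~A delivers, fed with the fiberwise vanishing $R^q\vf_{K*}\mc L_K=0$ for $q\ge1$ and the fiberwise length $\dim_K\uH^0(\mc O_{F_K}/\mc I)=d$ already established in Subsection~\ref{sb:ci}. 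Thus $(t_0,\dots,t_n)\mapsto[\,\mc O_{F_T}/\mc I\,]$ is a natural transformation into $\Hilb^d_{F/Y}$, i.e.\ a morphism $\Upsilon_\theta$ whose formation commutes with base change of $Y$.

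Next I would settle the image and the two compatibilities. On $F^{(n+1)}_T$ one has $\mc L_T=\prod_V(\mc P_V\mc O_{F^{(n+1)}_T})^{m_V}$, where $\mc P_V$ is the ideal of the center indexed by $V$; since Lemma~\ref{lemAut} identifies $F^{(n+1)}_T$ for the orderings $\theta$ and $\theta'$ and matches the exceptional divisors under the index shift $\alpha'$, a short check with the proximity matrices shows $E_T$ is literally the same divisor for $\theta$ and $\theta'$, whence $\mc I$ agrees and $\Upsilon_\theta=\Upsilon_{\theta'}\circ\Phi_{\theta,\theta'}$. For the image, Subsection~\ref{sb:ci} shows on geometric points that $\mc I$ is geometrically complete with diagram $\I D$, so the image lies in $H(\I D)$; conversely its ``$K$ algebraically closed'' paragraph recovers from any geometrically complete ideal with diagram $\I D$ a sequence giving a point of some $F(\I D,\phi)$, which the compatibility just proved transports into $F(\I D,\theta)$, so $H(\I D)$ lies in the image. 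The same recovery shows every geometric fiber of $\Upsilon_\theta$ is finite, so $\Upsilon_\theta$ is quasi-finite; it is of finite type and separated over $Y$ as well.

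The crux is the factorization. Since $\Upsilon_\theta$ is quasi-finite, separated and of finite type, Zariski's Main Theorem factors it as $F(\I D,\theta)\xrightarrow{j}\bar X\xrightarrow{g}\Hilb^d_{F/Y}$ with $j$ an open immersion and $g$ finite; replacing $\bar X$ by the scheme-theoretic closure of $j(F(\I D,\theta))$, I may assume $F(\I D,\theta)$ dense in $\bar X$. I would then show that no boundary point $x\in\bar X\setminus F(\I D,\theta)$ has $g(x)\in H(\I D)$. Otherwise choose a discrete valuation ring $A$ and a map $\Spec A\to\bar X$ sending the closed point to $x$ and the generic point to a point $\tilde{\I t}$ of the dense open $F(\I D,\theta)$; composing with $g$ gives a $\Hilb^d$-family over $T:=\Spec A$ whose ideal $\mc I$ has diagram $\I D$ on both fibers (generically because $\tilde{\I t}\in F(\I D,\theta)$, specially because $g(x)\in H(\I D)$) and for which $\mc I_\eta$ generates an invertible sheaf upstairs by the construction. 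Lemma~\ref{lemDVR1} then extends $\tilde{\I t}$ to a $T$-point of $F(\I D,\theta)$, and a separatedness argument — agreement on the dense generic point together with the finiteness, hence separatedness, of $g$ — identifies this extension with the chosen map, forcing $x\in F(\I D,\theta)$, a contradiction. Hence $g(\bar X\setminus F(\I D,\theta))$ is closed and disjoint from $H(\I D)$; with $U:=\Hilb^d_{F/Y}\setminus g(\bar X\setminus F(\I D,\theta))$ one gets $g^{-1}(U)=F(\I D,\theta)$, so $\Upsilon_\theta\:F(\I D,\theta)\to U$ is finite, $U\into\Hilb^d_{F/Y}$ is an open embedding, and the image is $H(\I D)$, closed in $U$.

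I expect the last step to be the main obstacle: one must arrange the discrete valuation ring through a boundary point — which is why density of $F(\I D,\theta)$ in $\bar X$ matters — and then verify in full that the resulting $\Hilb^d$-family satisfies every hypothesis of Lemma~\ref{lemDVR1}, in particular that both its fibers carry the diagram $\I D$ and that $\mc I_\eta$ is invertible on the top blowup, before the separatedness comparison can close the loop.
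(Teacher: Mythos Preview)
Your proposal is correct and follows essentially the same route as the paper: construct $\Upsilon_\theta$ via the pushforward of $\mc O(-E_T)$ using Appendix~A for flatness and base change, read off the image and the compatibility $\Upsilon_\theta=\Upsilon_{\theta'}\circ\Phi_{\theta,\theta'}$ from Subsection~\ref{sb:ci} and Lemma~\ref{lemAut}, then factor via Zariski's Main Theorem and use Lemma~\ref{lemDVR1} plus the valuative criterion of separatedness to show the boundary of the ZMT compactification avoids $H(\I D)$. The only cosmetic differences are that you phrase the DVR step as a contradiction where the paper argues directly, and you are slightly more explicit than the paper about why the total transforms $E^{(\theta(V)+1,\,n+1)}$ agree under reordering (via the proximity matrix), whereas the paper simply asserts this as a consequence of Lemma~\ref{lemAut}.
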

\begin{proof}
Say $\I D$ has $n+1$ vertices $V$ with weights $m_V$.  On $F^{(n+1)}$,
set
	$$E:=\tsum_Vm_VE^{(\theta(V)+1,\,n+1)} \text{ and }
	 \mc L:=\mc O(-E).$$
Consider the standard short exact sequence:
        $$0\to\mc L\to\mc O_{F^{(n+1)}}\to\mc O_{E}\to0.$$
It remains exact on the fibers of $\pi^{(n+1)}\:F^{(n+1)}\to F^{(n)}$.
And $\pi^{(n+1)}$ is flat by Lemma~\ref{lem:DerFam}.  Hence $\mc L$ and
$\mc O_{E}$ are flat over $F^{(n)}$ owing to the local criterion.

Fix a $T$-point of $F(\I D,\,\theta)\subset F^{(n)}$.  It corresponds to
a strict sequence of arbitrarily near $T$-points of $F/Y$ by
Theorem~\ref{thm:3-2}.  Set $\vf:=\vf^{(1)}_T\dotsb\vf^{(n+1)}_T$.  Let
$t\in T$.  Then $R^i\vf_{t*}(\mc L_t)=0$ and $R^i\vf_{t*}(\mc
O_{F^{(n+1)}_t})=0$ for $i\ge1$ by \cite[Thm.~2.13, p.~22]{D73}.
Therefore, by Lemma \ref{lemFlat}, the induced sequence on $F_T$,
\begin{equation}\label{equationprpF2H1}
        0\to \vf_*\mc L_T\to \vf_*\mc O_{F^{(n+1)}_T}
                \to \vf_*\mc O_{E_T}\to0,
\end{equation}
 is an exact sequence of $T$-flat sheaves, and forming it commutes with
extending $T$.

The middle term in (\ref{equationprpF2H1}) is equal to $\mc O_{F_T}$:
the comorphism $\mc O_{F_T}\to \vf_*\mc O_{F^{(n+1)}_T}$ is an
isomorphism, since forming it commutes with passing to the fibers of
$F_T/T$, and on the fibers, it is an isomorphism as it is the comorphism
of a birational map between smooth varieties.  The third term in
(\ref{equationprpF2H1}) is a locally free $\mc O_T$-module of rank $d$
because its fibers are vector spaces of dimension $d$ owing again to
\cite[Thm.~2.13, p.~22]{D73}.  Therefore, (\ref{equationprpF2H1})
defines a $T$-point of $\Hilb^d_{F/Y}$.

The construction of this $T$-point is, plainly, functorial in $T$, and
commutes with base extension of $Y$.  Hence it yields a map
$\Upsilon_\theta \: F(\I D,\,\theta)\to \Hilb^d_{F/Y}$, whose formation
commutes with extension of $Y$.

To see that $H(\I D)$ is the image of $\Upsilon_\theta$, just
 observe that, in view of Subsection~\ref{sb:ci}, if $T$ is the spectrum
of an algebraically closed field, then $\vf_*\mc L_T$ is a geometrically
complete ideal on $F_T$ with diagram $\I D$, and every such ideal on
$F_T$ is of this form for some choice of $T$-point of $F(\I
D,\,\theta)$.

Let $\theta'$ be a second ordering.  Then by the construction of
$\Phi_{\theta,\theta'}$ in the proof of Proposition~\ref{prpIso}, our
$T$-point of $F(\I D,\,\theta)$ is carried to that of $F(\I
D,\,\theta')$ given by Lemma~\ref{lemAut} with
$\alpha:=\theta'\circ\theta^{-1}$.  Moreover, the lemma says that
$F_T^{(n+1)}$ is unchanged and implies that $E^{(\theta(V)+1,\,n+1)}=
E^{(\theta'(V)+1,\,n+1)}$ for all $V$.  Hence $\Upsilon_\theta=
\Upsilon_{\theta'}\circ \Phi_{\theta,\theta'}$.

 By Zariski's Main Theorem in the form of \cite[Thm.~(8.12.6),
p.~45]{EGAIV3}, there exists a factorization
      $$\Upsilon_\theta\: F(\I D,\,\theta)\xto{\Omega} H
	\xto{\Theta}\Hilb^d_{F/Y},$$
where $\Omega$ is an open embedding and $\Theta$ is a finite map.  Let
$W$ be the image of $\Omega$, so $\Theta(W)=H(\I D)$.  Replace $H$ by
the closure of $W$, and let us prove $W=\Theta^{-1}H(\I D)$.

Let $v\in\Theta^{-1}H(\I D)$.  Then $v$ is the specialization of a point
$w\in W$ since $H$ is the closure of $W$.  And $w$ is the image of a
point $\I w\in F(\I D,\,\theta)$.  Hence, by \cite[Thm.~(7.1.9),
p.~141]{EGAII}, there is a map $\tau\:T\to H$ where $T$ is the spectrum
of a discrete valuation ring, such that the closed point $y\in T$ maps
to $v$ and the generic point $\eta\in T$ maps to $w$; also there is a
$k(\eta)$-point $\tilde{\I t}$ of $F(\I D,\,\theta)$ supported at $\I
w$.

The map $\Theta\circ\tau$ corresponds to a coherent ideal $\mc I$ on
$F_T$.  Now, both $\Theta(w)$ and $\Theta(v)$ lie in $H(\I D)$; so $\mc
I$ generates geometrically complete ideals on $F_\eta$ and $F_y$, each
with diagram $\I D$.  And $\Upsilon_\theta(\tilde{\I t})$ corresponds to
$\mc I_\eta$ on $F_\eta$; so $\mc I_\eta$ generates an invertible sheaf
on $F^{(n+1)}_\eta$.  Hence, by Lemma~\ref{lemDVR1}, the $k(\eta)$-point
$\tilde{\I t}$ extends to $T$-point $\I t$ of $F(\I D,\,\theta)$.

Then $\Upsilon_\theta(\I t)\:T\to W$ carries $\eta$ to $w$.  But $H/Y$
is separated.  Hence $\Upsilon_\theta(\I t)=\tau$ by the valuative
criterion \cite[Prp.~(7.2.3), p.~142]{EGAII}.  But $\tau(y)=v$.  Hence
$v\in W$.  Thus $W\supset\Theta^{-1}H(\I D)$.  But $\Theta(W)=H(\I D)$.
Therefore, $W=\Theta^{-1}H(\I D)$.

But $W$ is open in $H$, and $\Theta$ is finite.  So $\Theta(H)$ and
$\Theta(H-W)$ are closed in $\Hilb^d_{F/Y}$.  Hence $H(\I D)$ is open in
$\Theta(H)$.  So there is an open subscheme $U$ of $\Hilb^d_{F/Y}$ such
that $U\bigcap \Theta(H)=H(\I D)$.  Furthermore, $W\to U$ is finite, as
it is the restriction of $\Theta$.  So $F(\I D,\,\theta)\to U$ is
finite.  The proof is now complete.
\end{proof}

\begin{cor}\label{corHLC}
Let $\I D$ be an Enriques diagram, and set $d:=\deg\I D$.  Then $H(\I
D)$ is a locally closed subset of $\Hilb^d_{F/Y}$.
\end{cor}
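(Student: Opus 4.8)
The plan is to read the conclusion off directly from Proposition~\ref{prpF2H}, since that proposition has already done all the substantive work. Fix any ordering $\theta$ of $\I D$. Proposition~\ref{prpF2H} produces the natural map $\Upsilon_\theta\:F(\I D,\,\theta)\to\Hilb^d_{F/Y}$, identifies its image with $H(\I D)$, and exhibits a factorization of $\Upsilon_\theta$ into a finite map $F(\I D,\,\theta)\to U$ followed by an open embedding $U\into\Hilb^d_{F/Y}$.

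First I would recall that a finite morphism of schemes is closed, being proper with finite fibers. Hence the image of the finite map $F(\I D,\,\theta)\to U$ is a closed subset of $U$. Via the open embedding $U\into\Hilb^d_{F/Y}$, this image is carried to the image of $\Upsilon_\theta$, namely $H(\I D)$; so $H(\I D)$ is a closed subset of the open subscheme $U$ of $\Hilb^d_{F/Y}$.

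Finally, a subset that is closed in an open subset is by definition locally closed, so $H(\I D)$ is locally closed in $\Hilb^d_{F/Y}$, as claimed. I expect no real obstacle here: the construction of $\Upsilon_\theta$, the identification of its image with $H(\I D)$, and the finite-map/open-embedding factorization (obtained from Zariski's Main Theorem in the proof of Proposition~\ref{prpF2H}) are precisely the content carried out earlier, so the corollary is purely formal.
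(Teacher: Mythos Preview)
Your proposal is correct and essentially identical to the paper's own proof: both invoke Proposition~\ref{prpF2H} to write $\Upsilon_\theta$ as a finite map into an open subscheme $U\subset\Hilb^d_{F/Y}$, note that the image of a finite map is closed, and conclude that $H(\I D)$ is closed in $U$, hence locally closed in $\Hilb^d_{F/Y}$.
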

\begin{proof}
By Proposition~\ref{prpF2H}, $H(\I D)$ is the image of a finite map into
an open subscheme $U$ of $\Hilb^d_{F/Y}$.  So $H(\I D)$ is closed in
$U$, so locally closed in $\Hilb^d_{F/Y}$.
\end{proof}

\begin{rmk}\label{rmk}
 Lossen~\cite[Prp.~2.19, p.~35]{Los98} proved a complex analytic version
of Corollary~\ref{corHLC}.  Independently, Nobile and
Villamayor~\cite[Thm.~2.6, p.~250]{NV97} proved the corollary assuming
$\Hilb^d_{F/Y}$ is reduced and excellent; in fact, they worked with an
arbitrary flat family of ideals on a reduced excellent scheme, but of
course, any flat family is induced by a map to the Hilbert scheme.  All
three approaches are rather different.
\end{rmk}

\begin{thm}\label{thmF2H}
Let $\I D$ be an Enriques diagram, and set $d:=\deg\I D$.  Choose an
ordering $\theta$, and form the map $\Upsilon_\theta$ of
Proposition~{\rm\ref{prpF2H}}.  Then $\Upsilon_\theta$ induces a map
 $$\Psi\:F(\I D,\,\theta) \big/{\Aut}(\I D)\to \Hilb^d_{F/Y}.$$
It is universally injective; in fact, it is an embedding in
characteristic $0$.  Furthermore, $\Psi$ is independent of the
choice of $\theta$, up to a canonical isomorphism.
\end{thm}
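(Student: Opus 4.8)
The plan is to deduce everything formally from Proposition~\ref{prpF2H}, the free action of Corollary~\ref{corAut}, and the descent furnished by Corollary~\ref{corQt}. First I would check that $\Upsilon_\theta$ is invariant under $\Aut(\I D)$, so that it descends. Fix $\gamma\in\Aut(\I D)$ and set $\theta':=\theta\circ\gamma$; by Corollary~\ref{corAut}, $\gamma$ acts as $\Phi_{\theta,\theta'}$ once $F(\I U,\theta')$ is identified with $F(\I U,\theta)$ as the same subscheme of $F^{(n)}$. Since $\gamma$ preserves the weights, the divisor $E=\tsum_V m_V E^{(\theta(V)+1,\,n+1)}$ building $\Upsilon_\theta$ agrees with the one built from $\theta'$ (reindex by $W=\gamma V$, using $m_{\gamma^{-1}W}=m_W$), so $\Upsilon_{\theta'}=\Upsilon_\theta$ under that identification. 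Combined with $\Upsilon_\theta=\Upsilon_{\theta'}\circ\Phi_{\theta,\theta'}$ from Proposition~\ref{prpF2H}, this yields $\Upsilon_\theta\circ\gamma=\Upsilon_\theta$. As the action is free and $F(\I D,\theta)\to F(\I D,\theta)\big/\Aut(\I D)$ is finite and faithfully flat (Corollary~\ref{corQt}), the invariant $\Upsilon_\theta$ descends to the asserted $\Psi$, which factors through the open $U$ of Proposition~\ref{prpF2H}; finiteness of $Z:=F(\I D,\theta)\big/\Aut(\I D)\to U$ then follows by fppf descent from the finiteness of $F(\I D,\theta)\to U$.

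Next I would prove $\Psi$ universally injective by checking injectivity on $K$-points for every algebraically closed $K$, which is equivalent to universal injectivity. For such $K$ one has $Z(K)=F(\I D,\theta)(K)\big/\Aut(\I D)$, because $F(\I D,\theta)\to Z$ is finite and surjective with orbit fibers; so it suffices that the fibers of $\Upsilon_\theta$ on $K$-points be exactly the $\Aut(\I D)$-orbits. By Subsection~\ref{sb:ci}, a $K$-point of $F(\I D,\theta)$ produces the complete ideal $\mc I=\vf_{K*}\mc L_K$, and $\mc I$ determines the unordered configuration of local rings $\{A_i\}$ together with its weights and proximity relation; hence two sequences yield the same $\mc I$ precisely when they differ by a weight- and proximity-preserving relabelling, that is, by an element of $\Aut(\I D)$. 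This identifies the fibers with the orbits and gives universal injectivity.

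Since $\Psi$ is finite and universally injective, it becomes an embedding once shown to be a monomorphism, equivalently, being finite and radicial, once shown to be unramified: then finite $+$ radicial $+$ unramified gives a closed immersion onto the locally closed set $H(\I D)$ (Corollary~\ref{corHLC}), and composing with the open immersion $U\into\Hilb^d_{F/Y}$ produces the embedding. Universal injectivity already forces the residue extensions to be purely inseparable, hence trivial in characteristic $0$; the decisive remaining point is the absence of infinitesimal ramification, i.e. injectivity of $\Psi$ on $T$-points for all $T$. Here I would exploit that, in characteristic $0$, complete ideals are automatically geometrically complete (Subsection~\ref{sb:ci}), so the reconstruction of a sequence from $\mc I$ in Subsection~\ref{sb:ci} and Lemma~\ref{lemDVR1} can be carried out over the given base without any inseparable extension; this is what upgrades ``radicial'' to ``monomorphism.'' I expect this unramifiedness to be the main obstacle: it is precisely what fails in positive characteristic, where the reconstruction may demand a purely inseparable extension and $\Psi$ turns out purely inseparable rather than an immersion, as the examples of Appendix~B show. (Note that a finite radicial surjection need not be an immersion even in characteristic $0$, so unramifiedness cannot be bypassed.)

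Finally, for independence of $\theta$ I would descend the isomorphisms $\Phi_{\theta,\theta'}\:F(\I U,\theta)\risom F(\I U,\theta')$ of Proposition~\ref{prpIso}. Using the cocycle identity $\Phi_{\theta',\theta''}\circ\Phi_{\theta,\theta'}=\Phi_{\theta,\theta''}$, one checks that $\Phi_{\theta,\theta'}$ is $\Aut(\I D)$-equivariant for the two actions of Corollary~\ref{corAut}, so it descends to an isomorphism $\bar\Phi_{\theta,\theta'}\:F(\I D,\theta)\big/\Aut(\I D)\risom F(\I D,\theta')\big/\Aut(\I D)$. The relation $\Upsilon_\theta=\Upsilon_{\theta'}\circ\Phi_{\theta,\theta'}$ then descends to $\Psi_\theta=\Psi_{\theta'}\circ\bar\Phi_{\theta,\theta'}$, so the various $\Psi$ agree through these $\bar\Phi_{\theta,\theta'}$; the same cocycle identity makes the identifications mutually compatible, hence canonical.
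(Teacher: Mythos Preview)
Your construction of $\Psi$ by descent, the independence of $\theta$, and the universal injectivity argument all match the paper's proof essentially line for line.

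The genuine gap is in the embedding claim in characteristic $0$. You correctly isolate unramifiedness as the decisive point, but your proposed method---``the reconstruction of a sequence from $\mc I$ in Subsection~\ref{sb:ci} and Lemma~\ref{lemDVR1} can be carried out over the given base without any inseparable extension''---does not go through. Subsection~\ref{sb:ci} reconstructs the sequence only over an algebraically closed field, and Lemma~\ref{lemDVR1} only over a DVR; neither gives you control over $T$-points with $T=\Spec K[\epsilon]$, which is exactly what is needed to rule out infinitesimal ramification. The fact that complete ideals are geometrically complete in characteristic $0$ concerns field extensions, not nilpotent thickenings, so it does not upgrade radicial to monomorphism.

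The paper handles this differently. It equips $H(\I D)$ with its reduced structure (using Corollary~\ref{corHLC}) and observes that $\Psi$ induces a finite bijection $\beta\:F(\I D,\theta)\big/\Aut(\I D)\to H(\I D)$ with smooth irreducible source (Corollary~\ref{corQt}). In characteristic $0$ such a $\beta$ is birational. At this point the paper does not argue unramifiedness internally: it invokes Proposition~3.3.14 of \cite{Gu98} (Gustavsen's thesis) to conclude that $\beta$ is unramified, hence an isomorphism. The char-$0$ embedding statement thus rests on an external input that your proposal does not supply and that the tools developed in the paper do not replace.
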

\begin{proof}
By Corollary~\ref{corAut}, ${\Aut}(\I D)$ acts freely.  Hence, the
quotient map
 $$\Pi\:F(\I D,\,\theta)\to F(\I D,\,\theta)\big/{\Aut}(\I D)$$
 is faithfully flat.  By Proposition~\ref{prpF2H}, the action of
${\Aut}(\I D)$ is compatible with $\Upsilon_\theta$, and is compatible
with a second choice of ordering $\theta'$, up to the isomorphism
$\Phi_{\theta,\theta'}$.  Hence, by descent theory, $\Upsilon_\theta$
induces the desired map $\Psi$.  Plainly, its formation commutes
with base change.

Plainly, a map is universally injective if it is injective on geometric
points.  Furthermore, since $\Pi$ is surjective,
Proposition~\ref{prpF2H} also implies that $\Psi$ too factors
into a finite map followed by an open embedding.  Now, a finite map is a
closed embedding if its comorphism is surjective.  Hence, to prove that
$\Psi$ is an embedding, it suffices to prove that its fibers over
$Y$ are embeddings.  Now, forming $\Psi$ commutes with extending
$Y$.  Therefore, we may assume $Y$ is the spectrum of an algebraically
closed field $K$.

To prove $\Psi$ is universally injective, plainly we need only
prove $\Psi$ is injective on $K$-points.  Since $\Pi$ is
surjective, every $K$-point of $F(\I D,\,\theta)\big/{\Aut}(\I D)$ is
the image of a $K$-point of $F(\I D,\,\theta)$.  Hence we need only
observe that, if two $K$-points $\I t'$ and $\I t''$ of $F(\I
D,\,\theta)$ have the same image in $\Hilb^d_{F/Y}(K)$ under
$\Upsilon_\theta$, then the two differ by an automorphism $\gamma$ of
$\I D$.  But that image corresponds to a geometrically complete ideal
$\mc I$ on $F_K$ with diagram $\I D$.  In turn, as explained in
Subsection~\ref{sb:ci}, $\mc I$ determines a set $\I A$ of 2-dimensional
regular local $K$-domains whose fraction field is that of $F_K$, and $\I
A$ has a proximity structure, under which it is isomorphic to $\I D$.
Say ${\I t}'\in F(\I A,\,\theta')$ and ${\I t}''\in F(\I A,\,\theta'')$.
Then $\theta'^{-1}\circ\theta''$ induces the desired automorphism
$\gamma\in{\Aut}(\I D)$.

By Corollary~\ref{corQt}, $F(\I D,\,\theta) \big/{\Aut}(\I D)$ is smooth
and irreducible.  By Corollary~\ref{corHLC}, $H(\I D)$ is a locally
closed subset of $\Hilb^d_{F/Y}$, so carries an induced reduced
structure.  And $\Psi$ induces a bijective finite map $\beta\:F(\I
D,\,\theta) \big/{\Aut}(\I D) \to H(\I D)$.

Suppose $K$ is of characteristic $0$.  Then $\beta$ is birational.  If,
perchance, $\I D$ is minimal in the sense of \cite[Section~2,
p.~213]{KP99}, then $H(\I D)$ is smooth by the direct, alternative proof
of \cite[Prp.~(3.6), p.~225]{KP99}; hence, $\beta$ is an isomorphism.
In any case, it follows from Proposition 3.3.14 on p.~70 of \cite{Gu98}
that $\beta$ is unramified; hence, $\beta$ is an isomorphism.  The proof
is now complete.
\end{proof}

\begin{cor}\label{corHsbs}
Fix an Enriques diagram $\I D$, and set $d:=\deg\I D$.  Assume the
characteristic is $0$.  Then $H(\I D)\subset \Hilb^d_{F/Y}$ supports a
natural structure of $Y$-smooth subscheme with irreducible geometric
fibers of dimension $\dim(\I D)$.
\end{cor}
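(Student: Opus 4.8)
The plan is to obtain the scheme structure on $H(\I D)$ directly from the main theorem, and then read off smoothness, irreducibility of the geometric fibers, and their dimension by transporting these along an isomorphism with the quotient scheme $F(\I D,\,\theta)\big/{\Aut}(\I D)$.

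First I would invoke Theorem~\ref{thmF2H}: since the characteristic is $0$, the induced map
 $$\Psi\:F(\I D,\,\theta)\big/{\Aut}(\I D)\to \Hilb^d_{F/Y}$$
is an embedding, i.e.\ a locally closed immersion. Its set-theoretic image is $H(\I D)$: by Proposition~\ref{prpF2H} the image of $\Upsilon_\theta$ is $H(\I D)$, and $\Psi$ is induced from $\Upsilon_\theta$ through the faithfully flat (hence surjective) quotient map $\Pi$, so it has the same image. I would therefore declare the scheme structure on $H(\I D)$ to be that of the scheme-theoretic image of $\Psi$; because $\Psi$ is a locally closed immersion, this makes $\Psi$ an isomorphism
 $$F(\I D,\,\theta)\big/{\Aut}(\I D)\risom H(\I D).$$
That the underlying subset is genuinely locally closed, so that ``subscheme'' is justified, is recorded in Corollary~\ref{corHLC}.

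Next I would transport the geometric properties across this isomorphism. Recall that $\Aut(\I D)\subset\Aut(\I U)$, where $\I U$ is the unweighted diagram underlying $\I D$, and that by definition $\dim(\I D)=\dim(\I U)$. Applying Corollary~\ref{corQt} with the subgroup $G:=\Aut(\I D)$ shows that $F(\I D,\,\theta)\big/{\Aut}(\I D)$ is $Y$-smooth with irreducible geometric fibers of dimension $\dim(\I U)=\dim(\I D)$. Via the isomorphism above, $H(\I D)$ inherits exactly these properties.

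Finally, for naturality I would appeal to the last assertion of Theorem~\ref{thmF2H}, that $\Psi$ is independent of the choice of ordering $\theta$ up to a canonical isomorphism; hence the scheme structure placed on $H(\I D)$ does not depend on $\theta$, which is the content of the word ``natural.'' I do not expect a serious obstacle here, since all the substance is already contained in Theorem~\ref{thmF2H} and Corollary~\ref{corQt}. The one point deserving care is the compatibility of the scheme-theoretic image structure with the reduced induced structure used inside the proof of Theorem~\ref{thmF2H}, where the map $\beta$ onto the reduced $H(\I D)$ was shown to be an isomorphism only after base change to an algebraically closed field; so the verification I would carry out is that the smooth structure produced here agrees fiberwise with the one singled out in that argument, which follows from $F(\I D,\,\theta)\big/{\Aut}(\I D)$ being reduced over a field and from the compatibility of $\Psi$ with base extension of $Y$.
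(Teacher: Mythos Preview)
Your proposal is correct and follows essentially the same route as the paper: invoke Theorem~\ref{thmF2H} for the embedding in characteristic~$0$, Proposition~\ref{prpF2H} for the identification of the image with $H(\I D)$, and Corollary~\ref{corQt} for the $Y$-smoothness and the irreducibility and dimension of the geometric fibers. The paper's own proof is three sentences to this effect; your additional remarks on naturality and on the compatibility of scheme structures are reasonable elaborations rather than a different strategy.
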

\begin{proof}
 By Theorem~\ref{thmF2H}, $\Upsilon_\theta$ induces an embedding of
$F(\I D,\,\theta) \big/{\Aut}(\I D)$ into $\Hilb^d_{F/Y}$.  By
Proposition~\ref{prpF2H}, the image is $H(\I D)$.  And by
Corollary~\ref{corQt}, the source is $Y$-smooth, and has irreducible
geometric fibers of dimension $\dim(\I D)$.
\end{proof}

\begin{prp}\label{prpM}
Given positive integers $r_1, \ldots ,r_k$, let $G(r_i)\subset
\Hilb^{r_i}_{F/Y}$ be the open subscheme over which the universal family
is smooth, and let
$$G(r_1,\ldots,r_k)\subset G(r_1)\times_Y \cdots \times_Y G(r_k)$$
be the open subscheme over which, for $i\neq j$, the fibers of the
universal families over $G(r_i)$ and $G(r_j)$ have empty intersection.
Set $r:=\sum r_i$.

Given distinct integers $m_1,\ldots ,m_k\ge 2$, let $\I D$ be the the
weighted Enriques diagram with $r$ vertices, each a root, and an
ordering $\theta$ such that the first $r_1$ vertices are roots of weight
$m_1$, the next $r_2$ are of weight $m_2$, and so on.  Set
$d:=\sum\binom{m_i+1}{2}r_i$.

Then $F(\I D,\,\theta)$ is equal to the complement in the relative
direct product $F^{\x_Y r}$ of the $\binom r2$ large diagonals, and $F(\I
D,\,\theta) \big/{\Aut}(\I D)$ is equal to 
 $G(r_1,\ldots,r_k)$.
Further, $\Upsilon_\theta$ always induces an embedding
\[\Psi\colon G(r_1,\ldots,r_k) \into \Hilb^{d}_{F/Y};\] 
 on $T$-points, $\Psi$ acts by taking a $k$-tuple
 $(W_1,\dotsc,W_k)$ where $W_i$ is a smooth
length-$r_i$ subscheme of $F_T$, say with ideal $\mc I_i$, to the
length-$d$ subscheme $W$ with ideal $\prod\mc I^{m_i}_i$.
\end{prp}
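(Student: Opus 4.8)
The plan is to establish the four assertions in order, saving the embedding claim for last since it carries the genuinely new content in positive characteristic. To begin, since every vertex of $\I D$ is a root, the underlying $\I U$ is a discrete forest of $r$ isolated vertices, so $\dim(\I U)=2r$ by Formula~(\ref{eq:2-1}), while $\Aut(\I U)=S_r$ and $\Aut(\I D)=S_{r_1}\x\dots\x S_{r_k}$ is the subgroup permuting vertices within each weight block. The diagram being all roots means precisely that in every strict sequence with diagram $(\I U,\,\theta)$ alternative~(a) of Definition~\ref{dfn:sseq} holds throughout: each $T^{(i)}$ is disjoint from every strict transform $e^{(j,i)}_TE^{(j)}_T$, so no proximities occur (Definition~\ref{dfn:prox}). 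I would use this to set up, through the functor of points, a natural bijection between strict all-root sequences $(t_0,\dots,t_{r-1})$ and $r$-tuples of pairwise disjoint sections of $F_T/T$: pushing each $t_i$ down by $\vf^{(1)}_T\dotsb\vf^{(i)}_T$ gives disjoint sections (the intervening blowups are isomorphisms near the relevant loci), and conversely disjoint sections lift back by successively blowing up disjoint centres, which commutes with base change. By Theorem~\ref{thm:3-2} and Yoneda this identifies $F(\I U,\,\theta)$ with the complement of the $\binom r2$ large diagonals in $F^{\x_Y r}$, which is the first claim.

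Next, for the quotient I would transport the $\Aut(\I D)$-action through this identification. By Corollary~\ref{corAut}, $\gamma$ acts as $\Phi_{\theta,\,\theta\circ\gamma}$, and on geometric points $\Phi$ merely reindexes the (disjoint) local rings by $\theta\circ\gamma\circ\theta^{-1}$; so under the identification above the action of $S_{r_1}\x\dots\x S_{r_k}$ is the evident permutation of the sections within each block. Forming the quotient therefore replaces each ordered block of $r_j$ distinct sections by an unordered one, i.e.\ by a smooth length-$r_j$ subscheme of $F_T$, with all $k$ subschemes pairwise disjoint, which is exactly a $T$-point of $G(r_1,\dots,r_k)$. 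Here I would invoke the standard fact that over the distinct-point locus the Hilbert scheme of a smooth surface agrees with the symmetric quotient, so $(F^{\x_Y r_j}\smallsetminus\Delta)/S_{r_j}=G(r_j)$; combined with Corollary~\ref{corQt} this gives the second claim.

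For the formula for $\Psi$ on $T$-points I would compute $\Upsilon_\theta$ from Proposition~\ref{prpF2H} in this disjoint situation. Because the centres are pairwise disjoint, $F^{(n+1)}_T$ is the simultaneous blowup of $F_T$ at the $r$ sections, the divisors $E^{(\theta(V)+1,\,n+1)}_T$ are the disjoint exceptional divisors $E_i$, and $E_T=\sum_i m_{w(i)}E_i$. Localizing at each point and using $\vf_*\mc O(-mE_P)=\mc P_P^{\,m}$ for a blown-up smooth surface point, as recalled in Subsection~\ref{sb:ci}, together with comaximality of the $\mc P_i$, gives $\vf_*\mc L_T=\prod_i\mc P_i^{\,m_{w(i)}}=\prod_j\mc I_j^{\,m_j}$, where $\mc I_j$ is the ideal of $W_j$; the colength is $\sum_j r_j\binom{m_j+1}2=d$, as required.

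Finally, the embedding. By Proposition~\ref{prpF2H} and Theorem~\ref{thmF2H}, $\Psi$ is universally injective and factors as a finite map onto $H(\I D)$ followed by an open embedding, so it suffices to prove $\Psi$ is unramified: then its diagonal is a surjective open immersion, hence an isomorphism, so $\Psi$ is a monomorphism, a finite monomorphism is a closed immersion, and composing with the open embedding makes $\Psi$ an embedding. Since $\Omega_\Psi$ may be tested on geometric fibres of $Y$, I would reduce to $Y=\Spec K$ with $K$ algebraically closed and check injectivity of $\Psi$ on Zariski tangent spaces at each $K$-point. By the first claim the tangent space of $G(r_1,\dots,r_k)$ at a configuration is $\bigoplus_P T_PF$, and because the support is disjoint the tangent space of $\Hilb^d$ at $[\prod_j\mc I_j^{\,m_j}]$ splits as $\bigoplus_P\Hom(\mc P_P^{\,m},\mc O/\mc P_P^{\,m})$, with $d\Psi$ the direct sum of the single-point maps $v\mapsto\phi_v$ recording the first-order motion of $\mc P_P^{\,m}$ along $v$. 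In local coordinates $P=(0,0)$, moving $P$ by $\epsilon(a,b)$ sends the generator $x^iy^j$ (with $i+j=m$) to $-(ai\,x^{i-1}y^j+bj\,x^iy^{j-1})\bmod\mc P_P^{\,m}$. I expect the main obstacle to lie exactly here: in view of the ramified examples of Appendix~B one must see why no ramification occurs for roots, and the naive test generators $x^m,y^m$ differentiate with a factor $m$, which vanishes when $p\mid m$. The resolution, using $m\ge2$, is to test instead on the mixed generators $xy^{m-1}$ and $x^{m-1}y$, whose derivatives have the monomials $y^{m-1}$ and $x^{m-1}$ occurring with coefficient exactly $-a$ and $-b$ respectively, with no factor of $m$; these monomials are nonzero of degree $m-1<m$ in $\mc O/\mc P_P^{\,m}$, so $\phi_{(1,0)}$ and $\phi_{(0,1)}$ are linearly independent and $v\mapsto\phi_v$ is injective in every characteristic. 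Hence each single-point map, and therefore $d\Psi$, is injective, $\Psi$ is unramified, and the argument above makes it an embedding, completing the proof.
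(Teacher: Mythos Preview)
Your proof is correct and follows essentially the same route as the paper's. In particular, the crux---showing $\Psi$ is unramified by testing the tangent map on the mixed generators $xy^{m-1}$ and $x^{m-1}y$ rather than on $x^m$, $y^m$, so as to read off $a$ and $b$ with coefficient $1$ instead of $m$---is exactly the paper's computation (it writes $\mu^{m-1}\nu$ and $\mu\nu^{m-1}$ for your $x^{m-1}y$ and $xy^{m-1}$); your surrounding reductions and the identifications of $F(\I D,\theta)$ and its quotient likewise match the paper's, with somewhat more detail supplied.
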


\begin{proof}
  Let $(t_0,\dotsc,t_{r-1})$ be a strict sequence of arbitrarily near
  $T$-points of $F/Y$ with diagram $(\I D,\,\theta)$.  Plainly, the
  $t_i$ are just sections of $F_T$, and their images are disjoint.  So
  $F(\I D,\,\theta)$ is equal to the asserted complement.

Plainly, ${\Aut}(\I D)$ is the product of $k$ groups, the $i$th being
the full symmetric group on the $r_i$ roots in the $i$th set.  So the
quotient $F(\I D,\,\theta) \big/{\Aut}(\I D)$ is equal to the open
subscheme of $\Hilb^{r_1}_{F/Y}\times_Y \cdots \times_Y
\Hilb^{r_k}_{F/Y}$ whose geometric points parameterize the $k$-tuples
whose $i$th component is an unordered set of $r_i$ geometric points of
$F$ such that all $r$ points are distinct; in other words, the quotient
is equal to the asserted open subscheme.

Since each vertex is a root of some weight $m_i$, plainly $\Psi$ acts
on $T$-points in the asserted way, owing to the following standard
general result, which is easily proved by descending induction: let $A$
be a locally Noetherian scheme, $\mc I$ a regular ideal, $b\:B\to A$ the
blow-up of $\mc I$, and $E$ the exceptional divisor; let $m\ge0$ and set
$\mc L:=\mc O_B(-mE)$; then $R^qb_*\mc L=0$ for $q\ge1$ and $b_*\mc
L=\mc I^m$.
 
Finally, to prove that $\Psi$ is always an embedding, we may
assume that $Y$ is the spectrum of an algebraically closed field $K$,
owing to the proof of Theorem~\ref{thmF2H}.  By the same token,
$\Psi$ is universally injective, and factors into a finite map
followed by an open embedding.  Hence, we need only show that
$\Psi$ is unramified.

Let $v$ be a $K$-point of $\Hilb^r_{F/Y}$; let $V\subset F$ be the
corresponding subscheme, and $\mc I$ its ideal.  Recall the definition
of isomorphism from the tangent space at $v$ to the normal space
$\Hom(\mc I, \mc O_V)$; the definition runs as follows.  Let
$K[\epsilon]$ be the ring of dual numbers, and set
$T:=\Spec(K[\epsilon])$.  An element of the tangent space corresponds to
a $T$-point of $\Hilb^r_{F/Y}$ supported at $v$; so it represents a
$T$-flat subscheme $V_\epsilon\subset F_T$ that deforms $V$.  The
natural splitting $K[\epsilon] = K \oplus K\epsilon$ induces a splitting
$\mc O_{V_\epsilon} = \mc O_V \oplus \mc O_V\epsilon$.  Similarly, the
ideal $\mc I_\epsilon$ of $V_\epsilon$ splits: $\mc I_\epsilon = \mc I
\oplus \mc I\epsilon$.  Then the natural map $\mc O_{F_T}\to \mc
O_{V_\epsilon}$ restricts to a map $\mc I\to \mc O_V\epsilon$, which is
equal to the desired map  $\zeta\:\mc I\to \mc O_V$.

Assume $v\in G(r_1,\ldots,r_k)$.  Then $V$ is the union of $k$ sets of
reduced $K$-points of $F$.  The $i$th set has $r_i$ points; let $\mc
I_i$ be the ideal of its union.  Further, $\Psi$ carries $V$ and
$V_\epsilon$ to the subschemes $W$ and $W_\epsilon$ defined by $\mc
I_1^{m_1} \dotsb \mc I_k^{m_k}$ and $\mc I_{1,\epsilon}^{m_1} \dotsb \mc
I_{k,\epsilon}^{m_k}$.  So $\Psi$ is unramified at $v$ if the induced
map on tangent spaces is injective:
$$\psi\:T_{G(r_1,\ldots,r_k),v}\into \Hom(\mc I_1^{m_1}\cdots
 \mc I_k^{m_k},\, \mc O_W).$$

Say $v=(v_1,\ldots,v_k)$ with $v_i\in G(r_i)$, and say $v_i$ represents
$V_i\subset F$.  Then
\[T_{G(r_1,\ldots,r_k),v} =\bigoplus T_{\Hilb_{F/Y}^{r_i},v_i}
 =\bigoplus\Hom(\mc I_i,\mc O_{V_i}).\]
Given any $\zeta\in T_{G(r_1,\ldots,r_k),v}$, its image $\psi(\zeta)$ is
equal to the restriction of the canonical map $\mc O_{F_T}\to \mc
O_{W_\epsilon}$.  So $\psi$ splits into a direct sum of local components
  $$\psi_x\:\Hom(\mc I_{i,x}, \mc O_{V,x})\to \Hom(\mc I^{m_i}_{i,x},
\mc O_{W,x})
\quad\text{for }x\in V_i\text{ and } i=1,\dotsc,k.$$
It remains to prove that each $\psi_x$ is injective.  Fix an $x$.

Set $\mc I:=\mc I_i$ and $m:=m_i$.  Fix generators
$\mu,\nu\in\mc I_x$.  Set $a:=\zeta_x\mu\in K$ and $b:=\zeta_x\nu\in K$.
Then $\mc I_{\epsilon,x}$ is generated by $\mu-a\epsilon$ and
$\nu-b\epsilon$;  so $\mc I^m_{\epsilon,x}$ is generated by
\begin{multline*}
\mu^m-ma\mu^{m-1}a\epsilon,\ \mu^{m-1}\nu-(m-1)\mu^{m-2}\nu
a\epsilon-\mu^{m-1}b\epsilon,\ \dotsc,\\
\mu\nu^{m-1}-a\nu^{m-1}\epsilon-(m-1)b\mu\nu^{m-2}\epsilon,\ \nu^m-m\nu^{m-1}b\epsilon.
\end{multline*}
Hence, modulo $\mc I^m_{\epsilon,x}$, the generators $\mu^{m-1}\nu$ and
$\mu\nu^{m-1}$ of $\mc I^m_x$ are congruent to $(m-1)\mu^{m-2}\nu
a\epsilon+\mu^{m-1}b\epsilon$ and $a\nu^{m-1}\epsilon
+(m-1)b\mu\nu^{m-2}\epsilon$.  (They're equal if $m=2$.) 

Form the latter's classes in $\mc
O_{W,x}$.  Then, therefore, these classes are the images of those
generators under the map $\psi_x\zeta_x$.  Hence, in any
characteristic, we can recover $a$ and $b$ from the images of
$\mu^{m-1}\nu$ and $\mu\nu^{m-1}$.  But $a$ and $b$ determine $\zeta_x$.
Thus $\psi_x$ is injective, and the proof is complete.
\end{proof}

\appendix
\section{Generalized property of exchange}\label{ap:exchg}
This appendix proves two lemmas of general interest, which we need.  The
first lemma generalizes the property of exchange to a triple $(T,f,F)$
where $T$ is a (locally Noetherian) scheme, $f\:P\to Q$ is a proper map
of $T$-schemes of finite type, and $\mc F$ is a $T$-flat coherent sheaf
on $P$.  The original treatment was made by Grothendieck and Dieudonn\'e
in \cite[Sec.~7.7, pp.~65--72]{EGAIII2}, and somewhat surprisingly,
deals only with the case of $Q=T$.  (Although they replace $F$ by a
complex of flat and coherent sheaves bounded below, this extension is
minor and we do not need it.)

The first lemma is proved by generalizing the treatment in Section II, 5
of \cite[pp.\,46--55]{Mu70}.  Alternatively, as Illusie pointed out in a
private conversation, the lemma can be proved using the methods that he
developed in  \cite{Il70}.

The first lemma is used to prove the second.  The second is used
in the proof of Proposition~\ref{prpF2H}, which constructs the map from
the scheme of $T$-points with given Enriques diagram to the Hilbert
scheme.

\begin{lem}[Generalized property of exchange]\label{lemExch}
Let $T$ be a scheme, $f\:P\to Q$ a proper map of  $T$-schemes of finite
type, and $\mc F$ a $T$-flat coherent sheaf on $P$.  Let $q\in Q$ be a
point, $t\in T$ its image, and  $i\ge0$ an integer.  Assume that, on the
fiber $Q_t$, the base-change map  of sheaves 
\begin{equation*}\label{eqExch1}
        \rho^i_t\:(R^if_*\mc F)_t \to R^if_{t*}\mc F_t
\end{equation*}
is surjective at $q$.  Then there exists a neighborhood $U$ of $q$ in
$Q$ such that, for any $T$-scheme $T'$, the base-change map of sheaves
\begin{equation*}\label{eqExch2}
        \rho^i_{T'}\:(R^if_*\mc F)_{T'} \to R^if_{{T'}*}\mc F_{T'}
\end{equation*}
is bijective on the open subset $U_{T'}$ of $Q_{T'}$.  Furthermore, the
map $\rho^{i-1}_t$ is also surjective at $q$ if and only if sheaf
$R^if_*\mc F$ is  $T$-flat at $q$. 
\end{lem}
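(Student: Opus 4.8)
The plan is to reduce both assertions to a linear-algebra statement about a single bounded complex on $Q$, exactly as in the absolute case $Q=T$ treated by Mumford, but carrying the flatness over $T$ rather than over $Q$ throughout. Both assertions are local on $Q$ and on $T$, and every map in sight is compatible with base change $T'\to T$, so first I would shrink $T$ to an affine neighbourhood $\Spec A$ of $t$ and $Q$ to an affine neighbourhood $\Spec B$ of $q$, keeping $f\:P\to\Spec B$ proper and $\mc F$ coherent and $A$-flat. Choosing a finite cover of $P$ by affine opens, I would form the \v Cech complex $C^\bullet$ of $\mc F$: it is a bounded complex of $B$-modules, and each term, being the module of sections of the $T$-flat sheaf $\mc F$ over an affine, is $A$-flat. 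Its cohomology sheafifies to $R^if_*\mc F$ on $Q$, and for every $A$-algebra $A'$ the complex $C^\bullet\otimes_AA'$ computes $R^if_{T'*}\mc F_{T'}$ on $Q_{T'}$, with $\rho^i_{T'}$ identified with the natural map $H^i(C^\bullet)\otimes_AA'\to H^i(C^\bullet\otimes_AA')$ (one patches for non-affine $T'$).

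The heart of the argument is a relative version of the replacement lemma of \cite[\S\,II.5]{Mu70}. I would produce a bounded complex $\mc K^\bullet$ of coherent, $T$-flat $\mc O_Q$-modules together with a map $\mc K^\bullet\to C^\bullet$ inducing, for every $\mc O_T$-module $\mc M$, an isomorphism $H^i(\mc K^\bullet\otimes\mc M)\risom H^i(C^\bullet\otimes\mc M)$ for all $i$. The construction proceeds by descending induction on the degree, choosing at each stage finitely many generators over $B$ of the coherent sheaf $R^if_*\mc F$, lifting them to cocycles, and building a $T$-flat coherent term mapping onto them; here Mumford's use of free modules over the base must be replaced by $T$-flat coherent modules, which is the correct finiteness condition once the target of $f$ and the base of flatness differ. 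The comparison is a \emph{universal} quasi-isomorphism for the following reason: the mapping cone of $\mc K^\bullet\to C^\bullet$ is a bounded exact complex whose terms are $T$-flat, and such a complex stays exact after any tensor product, since breaking it into short exact sequences and descending from the top shows that every syzygy module is again $T$-flat. In particular $R^if_*\mc F\cong\mc H^i(\mc K^\bullet)$, and the formation of $\mc K^\bullet$ commutes with base change $T'\to T$.

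Granting $\mc K^\bullet$, the two assertions become the classical exchange lemmas, now read over the pair $(B,A)$. Because the terms of $\mc K^\bullet$ are $T$-flat, tensoring a short exact sequence of $\mc O_T$-modules with $\mc K^\bullet$ stays exact, so the functor $\mc M\mapsto H^i(\mc K^\bullet\otimes\mc M)$ is half-exact and $\rho^i$ is its comparison with $\mc H^i(\mc K^\bullet)\otimes\mc M$. I would then invoke the exchange lemma for half-exact functors in the form of \cite[\S\,7.7]{EGAIII2}: if $\rho^i_t$ is surjective at $q$, then by Nakayama the relevant map of coherent sheaves is surjective on a neighbourhood $U$ of $q$ in $Q$, and surjectivity of the base-change map forces bijectivity; since the formation of $\mc K^\bullet$ and of this surjectivity locus is stable under base change, $\rho^i_{T'}$ is bijective on $U_{T'}$ for every $T'$. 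For the last sentence, once $\rho^i$ is bijective near $q$, the standard criterion for complexes of flat modules states that $\mc H^i(\mc K^\bullet)$ is $T$-flat at $q$ precisely when $\rho^{i-1}_t$ is also surjective at $q$; as $\mc H^i(\mc K^\bullet)=R^if_*\mc F$, this is the asserted equivalence.

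The main obstacle is the relative replacement lemma itself, and in particular producing coherent $T$-flat terms in place of Mumford's free modules while keeping the comparison with $C^\bullet$ a quasi-isomorphism that persists after every base change over $T$. The delicate point is exactly the uniformity of the neighbourhood $U$ over all $T'$, which is what distinguishes the generalized statement from the absolute one and which rests on the universal exactness of the mapping cone established above. Once that uniformity is in hand, the half-exact-functor formalism supplies the remaining implications with no further geometric input.
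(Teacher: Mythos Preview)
Your overall architecture matches the paper's, but the step you flag as ``the main obstacle'' is in fact a genuine gap that you do not close. You want terms that are simultaneously coherent over $B$ and flat over $A$, and your recipe is to choose finitely many $B$-generators of $R^if_*\mc F$ and ``build a $T$-flat coherent term mapping onto them.'' But what module is that? The natural choice, a free $B$-module $B^n$, is $A$-flat only when $B$ is $A$-flat, which you have not arranged; and there is no general supply of $A$-flat finitely generated $B$-modules surjecting onto a given coherent sheaf. So the descending induction stalls at the very first step.

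The paper's fix is a one-line reduction you are missing: before building the complex, express $B$ as a quotient of a polynomial ring over $A$ and replace $B$ by that polynomial ring (harmless for $f$ and $\mc F$). Now $B$ is $A$-flat, so free $B$-modules are $A$-flat, and Mumford's construction on p.~46 of \cite{Mu70} goes through verbatim to yield the complex $K^\bullet$ of $A$-flat finitely generated $B$-modules with $H^j(K^\bullet\otimes_AC)\cong H^j(P\otimes_AC,\mc F\otimes_AC)$ for every $A$-algebra $C$. From there the paper does not invoke the half-exact-functor machinery either: it argues directly that surjectivity of $\rho^i_t$ at $q$ forces $Z^i(K^\bullet)\otimes k\to Z^i(K^\bullet\otimes k)$ to be surjective, hence $L:=K^{i+1}/B^{i+1}(K^\bullet)$ is $A$-flat at $q$ by the local criterion, and then openness of flatness lets one localize $B$ so that $0\to Z^i(K^\bullet)\otimes C\to K^i\otimes C\to K^{i+1}\otimes C\to L\otimes C\to 0$ is exact for every $C$; this gives universal bijectivity of $\rho^i$ on the chosen neighbourhood, and a similar chase through $B^i$ and $Z^{i-1}$ handles the flatness equivalence. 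Your half-exact-functor route would also finish the job once $K^\bullet$ exists, but the existence of $K^\bullet$ is exactly where the paper's extra idea is needed.
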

\begin{proof}
 The question is local on $Q$; so we may assume that $T=\Spec A$ and
$Q=\Spec B$ where $A$ is a Noetherian ring and $B$ is a finitely
generated $A$-algebra.  Also, we may assume that $B$ is $A$-flat by
expressing $B$ as a quotient of a polynomial ring over $A$ and then
replacing $B$ with that ring.  For convenience, when given a $B$-module
or a map of $B$-modules, let us say that it has a certain property at
$q$ to mean that it acquires this property on localizing at the prime
corresponding to $q$.

There is a finite complex $K^\bu$ of $A$-flat finitely generated
$B$-modules, and on the category of $A$-algebras $C$, there is, for
every $j\ge0$, an isomorphism of functors
\begin{equation*}\label{eqExch3}
 \uH^j(K^\bu\ox_AC) \risom \uH^j(P\ox_AC,\ \mc F\ox_AC).
\end{equation*}
Indeed, this statement results, mutatis mutandis, from
the proof of
the theorem on page 46 of \cite{Mu70}.

Let $k$ be the residue field of $t$.  Then there is a natural map of
exact sequences
\begin{equation}\label{CDExch1}\begin{CD}
K^{i-1}\ox k @>>> \uZ^i(K^\bu)\ox k @>>> \uH^i(K^\bu)\ox k @>>> 0\\
        @VV1V               @VV z^i_k V               @VV h^i_k V\\
K^{i-1}\ox k @>>> \uZ^i(K^\bu\ox k) @>>> \uH^i(K^\bu\ox k) @>>> 0\rlap{.}
\end{CD} \end{equation}
Since $\rho^i_k$ is surjective at $q$, so is $h^i_k$.  Hence $z^i_k$ is
surjective at $q$.

Consider the following map of exact sequences:
\begin{equation*}\label{CDExch2}\begin{CD}
\uZ^i(K^\bu)\ox k @>>> K^i\ox k @>>> \uB^{i+1}(K^\bu)\ox k @>>> 0\\
         @VV z^i_k V       @VV1V                @VV b^{i+1}_k V\\
\uZ^i(K^\bu\ox k) @>>> K^i\ox k @>>> \uB^{i+1}(K^\bu\ox k) @>>> 0\rlap{.}
\end{CD} \end{equation*}
Now, $z^i_k$ is surjective at $q$.  Hence $b^{i+1}_k$ is bijective at $q$.

Hence $\uB^{i+1}(K^\bu)\ox k\to K^{i+1}\ox k$ is injective at $q$.  Set
$L:=K^{i+1}/\uB^{i+1}(K^\bu)$.  Since $K^{i+1}$ is $A$-flat,
the local criterion of flatness implies that $L$ is $A$-flat at
$q$.  Hence, by the openness of flatness, there is a $g\in B$ outside
the prime corresponding to $q$ such that the localization $L_g$ is
$A$-flat.  We can replace $B$ by $B_g$, and so assume $L$ is $A$-flat.

Let $C$ be any $A$-algebra.  Then the following sequence is exact:
\begin{equation}\label{eqExch4}
0\to\uZ^i(K^\bu)\ox C \to K^i\ox C
 \to K^{i+1}\ox C \to L\ox C  \to0.
\end{equation}
It follows that, in the map of exact sequences
\begin{equation*}\label{CDExch3}\begin{CD}
K^{i-1}\ox C @>>> \uZ^i(K^\bu)\ox C @>>> \uH^i(K^\bu)\ox C @>>> 0\\
        @VV1V               @VV z^i_C V               @VV h^i_C V\\
K^{i-1}\ox C @>>> \uZ^i(K^\bu\ox C) @>>> \uH^i(K^\bu\ox C) @>>> 0\rlap{,}
\end{CD} \end{equation*}
$z^i_C$ is bijective.  Hence $h^i_C$ is bijective.  Thus the first
assertion holds: $\rho^i_C$ is bijective.

If $\uH^i(K^\bu)$ is  $A$-flat at $q$, then plainly the sequence
\begin{equation}\label{eqExch5}
0\to \uB^i(K^\bu)\ox k\to \uZ^i(K^\bu)\ox k\to \uH^i(K^\bu)\ox k\to0
\end{equation}
is exact.  The converse holds too by the local criterion for
flatness, because $\uZ^i(K^\bu)$ is $A$-flat  owing to the exactness of
(\ref{eqExch4}) with $C:=A$ and to the flatness of $L$. 

Since $z^i_k$ is bijective, (\ref{eqExch5}) is exact if and only if
$b^i_k$ is injective.  The latter holds if and only if $z^{i-1}_k$ is
surjective, owing to the map of  exact sequences
\begin{equation*}\label{CDExch4}\begin{CD}
 @. Z^{i-1}(K^\bu)\ox k  @>>> K^{i-1}\ox k @>>> \uB^i(K^\bu)\ox k  @>>> 0\\
  @.    @VV z^{i-1}_kV           @VV 1 V              @VV b^i_k V\\
 0 @>>> Z^{i-1}(K^\bu\ox k)  @>>> K^{i-1}\ox k @>>> \uB^i(K^\bu\ox k)
   @>>> 0\rlap{.}
\end{CD} \end{equation*}

Finally, $z^{i-1}_k$ is surjective if and only if $h^{i-1}_k$ is so,
owing to (\ref{CDExch1}) with $i-1$ in place of $i$.  Putting it all
together, we've proved that $h^{i-1}_k$ is surjective if and only if
$\uH^i(K^\bu)$ is $A$-flat at $q$.  In other words, the second assertion
holds too.
\end{proof}

\begin{lem}\label{lemFlat}
 Let $T$ be a scheme, $f\:P\to Q$ a proper map of $T$-schemes of finite
type, and
\begin{equation}\label{eqFlat1}
        0\to\mc F\to\mc G\to\mc H\to0
\end{equation}
 a short exact sequence of $T$-flat coherent sheaves on $P$.  For each
point $t\in T$, let $f_t$ and $\mc F_t$ and $\mc G_t$ denote the
restrictions to the fiber $P_t$, and assume that
\begin{equation}\label{eqFlat2}
  R^if_{t*}(\mc F_t)=0\text{ and } R^if_{t*}(\mc G_t)=0\text{ for }
i\ge1.
\end{equation}
 Then the induced sequence on $Q$,
\begin{equation}\label{eqFlat3}
 0\to f_*\mc F\to f_*\mc G\to f_*\mc H\to0,
\end{equation}
 is a short exact sequence of $T$-flat coherent sheaves, and forming it
commutes with base extension.
\end{lem}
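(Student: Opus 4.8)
The plan is to reduce the whole statement to a single-sheaf consequence of the exchange property, Lemma~\ref{lemExch}, applied in turn to each of $\mc F$, $\mc G$, $\mc H$. Since the assertion is local on $Q$, I would first take $T=\Spec A$ and $Q=\Spec B$ with $A$ Noetherian. Because $\mc H$ is $T$-flat, restricting the sequence (\ref{eqFlat1}) to a fibre $P_t$ keeps it exact, so $0\to\mc F_t\to\mc G_t\to\mc H_t\to0$ is exact; feeding this into the long exact sequence of the functors $R^if_{t*}$ and using the vanishing (\ref{eqFlat2}) for $\mc F$ and $\mc G$ then yields $R^if_{t*}\mc H_t=0$ for $i\ge1$ as well. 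Thus all three sheaves are $T$-flat and have vanishing higher cohomology on every fibre of $f$, so they stand on equal footing.

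Next, for any one of these sheaves, say $\mc E$, I would derive that $R^if_*\mc E=0$ for $i\ge1$ and that $f_*\mc E$ commutes with arbitrary base change. Indeed, for each $i\ge1$ the target of $\rho^i_t\:(R^if_*\mc E)_t\to R^if_{t*}\mc E_t$ vanishes, so $\rho^i_t$ is trivially surjective at every point; by the first assertion of Lemma~\ref{lemExch}, applied at every point, the map $\rho^i_{T'}$ is bijective over all of $Q_{T'}$ for every $T'$, and taking $T'=\Spec k(t)$ shows that the restriction of the coherent sheaf $R^if_*\mc E$ to every fibre $Q_t$ vanishes, whence $R^if_*\mc E=0$ by Nakayama. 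Applying the second assertion of Lemma~\ref{lemExch} with $i=1$, the flatness of $R^1f_*\mc E=0$ forces $\rho^0_t$ to be surjective at every point, and then the first assertion makes $\rho^0_{T'}$ bijective for all $T'$; that is, $f_*\mc E$ commutes with base change.

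The hard part is the $T$-flatness of $f_*\mc E$: the statement of Lemma~\ref{lemExch} delivers flatness only of the \emph{higher} direct images $R^if_*\mc E$ with $i\ge1$, which here are zero and give no information. To obtain flatness of $f_*\mc E$ itself I would go back into the proof of Lemma~\ref{lemExch} and use the finite complex $K^\bullet$ of $A$-flat, finitely generated $B$-modules produced there, for which $R^if_{C*}\mc E_C\cong\mathcal H^i(K^\bullet\otimes_A C)$ functorially in the $A$-algebra $C$. Since the higher cohomology vanishes universally, $K^\bullet$ fits in an exact sequence $0\to f_*\mc E\to K^0\to K^1\to\dotsb\to K^n\to0$ that stays exact after $\otimes_A C$ for every $C$; splitting it into short exact sequences and descending from $K^n$, each syzygy is $A$-flat, and hence so is $f_*\mc E$. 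Equivalently, one proves this single-sheaf corollary exactly as Mumford deduces his corollaries from the exchange theorem in \cite[\S II.5]{Mu70}. This flatness step is the principal obstacle, precisely because it cannot be read off from the \emph{statement} of Lemma~\ref{lemExch}.

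Finally I would assemble the three cases. Applying the above to $\mc F$, $\mc G$, and $\mc H$ gives $R^if_*\mc F=R^if_*\mc G=R^if_*\mc H=0$ for $i\ge1$, all three direct images $T$-flat, and each commuting with base change. The long exact sequence of the $R^if_*$ attached to (\ref{eqFlat1}) then collapses, its connecting map landing in $R^1f_*\mc F=0$, so (\ref{eqFlat3}) is short exact with $T$-flat terms. That its formation commutes with base extension follows because each term does and, for any $T'\to T$, the base-changed sheaves $\mc F_{T'}$, $\mc G_{T'}$, $\mc H_{T'}$ are again $T'$-flat with vanishing higher fibre cohomology, so the same argument over $T'$ shows that $0\to f_{T'*}\mc F_{T'}\to f_{T'*}\mc G_{T'}\to f_{T'*}\mc H_{T'}\to0$ is exact and coincides with the base change of (\ref{eqFlat3}).
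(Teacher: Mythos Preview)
Your argument is correct and follows exactly the paper's route: derive $R^if_{t*}\mc H_t=0$ from the fibrewise long exact sequence, apply the exchange lemma to each of $\mc F,\mc G,\mc H$ separately, and then collapse the long exact sequence of $R^if_*$ using $R^1f_*\mc F=0$.

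One remark on what you call ``the hard part.''  You say the statement of Lemma~\ref{lemExch} delivers $T$-flatness only of the higher $R^if_*\mc E$, and so you dive back into the proof and argue with the complex $K^\bullet$.  That works, but it is not necessary: the second assertion of Lemma~\ref{lemExch} applies verbatim with $i=0$.  You have already shown $\rho^0_t$ is surjective everywhere, so the lemma says $\rho^{-1}_t$ is surjective at $q$ if and only if $R^0f_*\mc E=f_*\mc E$ is $T$-flat at $q$; but $\rho^{-1}_t$ is the zero map between zero sheaves, hence trivially surjective, and flatness of $f_*\mc E$ follows.  This is how the paper reads its own Lemma~\ref{lemExch}: it simply cites the exchange lemma for the flatness of $f_*\mc F$, $f_*\mc G$, $f_*\mc H$ without further ado.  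Your alternative via the resolution $0\to f_*\mc E\to K^0\to\dotsb\to K^n\to0$ is a perfectly good substitute, just longer.
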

\begin{proof}
 Since $\mc H$ is $T$-flat, the sequence (\ref{eqFlat1}) remains exact
after restriction to the fiber $P_t$ for each $t\in T$, and so the
restricted sequence induces a long exact sequence of cohomology.  Hence,
(\ref{eqFlat2}) yields
\begin{equation*} \label{eqFlat6}
 R^if_{t*}(\mc H_t)=0\hbox{ for all }i\ge1.
\end{equation*}
 
By hypothesis, $\mc F$, $\mc G$, $\mc H$ are $T$-flat.  Hence, by the
generalized property of exchange, Lemma~\ref{lemExch}, the sheaves
$f_*\mc F$, $f_*\mc G$, $f_*\mc H$ are $T$-flat, and forming them
commutes with extending $T$.  By the same token, $R^1f_{*}(\mc F)=0$;
whence, Sequence  (\ref{eqFlat3}) is exact.  The assertion follows.
\end{proof}

\section{A few examples {\rm\scshape by Ilya Tyomkin}}
\label{ap:eg}

Let $F$ be the affine plane over the spectrum $Y:=\Spec(K)$ of
an algebraically closed field $K$ of positive characteristic $p$.  In
this appendix, we analyze a few simple examples of minimal Enriques
diagrams $\I D$.  Some depend
on $p$, and have an ordering $\theta$ for which the universally injective
map of Theorem~\ref{thmF2H},
 $$\Psi\:F(\I D,\,\theta) \big/{\Aut}(\I D)\to \Hilb^d_{F/Y},$$
is purely inseparable.  Others are independent of $p$; they have several
vertices, but only one root, yet they have an ordering $\theta$ for which
$\Psi$ is an embedding.  In fact, in every case, $\theta$ is unique, and
$\Aut(\I D)$ is trivial.

We take $F$ to be the affine plane just to simplify the presentation.
With little modification, everything works for any smooth irreducible surface
$F$.

It is unknown what conditions on an arbitrary Enriques diagram $\I D$
serve to guarantee here that $\Psi$ is unramified, so an embedding.
Nevertheless, in view of the analysis in this appendix, it is reasonable
to make the following guess.

\begin{gss}\label{gss2}
 If $p>\frac12\sum_{V\in\I D}\,m_V$, then $\Psi$ is  unramified.
\end{gss}

This guess is sharp in the sense that, if $p\le\frac12\sum_{V\in\I
  D}\,m_V$, then $\Psi$ may be ramified.  For example, consider the
plane curve $C:x_2^p=x_1^{p+1}$.  In the notation of Definition
\ref{dfMpm}, the minimal diagram of $C$ is $\I M_{p,p}$.  It has $p+1$
vertices with $m_V=p,1,1,\dotsc,1$.  So $p=\frac12\sum_{V\in\I D}
\,m_V$.  And $\Psi$ is ramified by Proposition \ref{prop:purins}.

Similarly, consider $C:y(y-x^p)=0$.  Its minimal diagram has $p$
vertices $V$ with $m_V=2$.   So $p=\frac12\sum_{V\in\I D}
\,m_V$.  And $\Psi$ is ramified by an argument similar to the proof of
Proposition \ref{prop:purins}.

On the other hand, if $\I D$ has a single vertex of weight $2p$, then
$\Psi$ is unramified by Proposition \ref{prpM}, and of course, $p=\frac12
\sum_{V\in\I D}\,m_V$.

In general, if a branch has tangency of order divisible by $p$ to an
exceptional divisor $E$, then the multiplicity of the root must be at
least $p$ and there must be at least $p$ other vertices. So $p\le
\frac12\sum_{V\in\I D}\,m_V$.  Instead, if, at a point $P\in F$, all the
branches have a tangency of order divisible by $p$ to the same smooth
curve $D$, then there must be at least $p$ vertices $V$ with $m_V\ge2$.
So again, $p\le\frac12\sum_{V\in\I D}\,m_V$.  Thus, if we guess that
$\Psi$ can be ramified in only these two ways, then we arrive at Guess~\ref{gss2}.

Further, although $\Psi$ does not sense first-order deformations either
along $E$ or along $D$, nevertheless after we add a transverse branch at
$P$, then $\Psi$ does sense first-order deformations of the new branch;
thus $\Psi$ becomes unramified.  This intuition is developed into a
rigorous proof for the ordinary tacnode in Proposition
\ref{prop:IsoForAll}, and a similar procedure works if the tacnode is
replaced by an ordinary cusp.

\begin{dfn}\label{dfMpm}
Fix $m\ge p$.
 Let $\I M_{p,m}$ denote the minimal Enriques diagram of the plane curve
singularity with $1+m-p$ branches whose tangent lines are distinct,
whose first branch is $\{\,x_2^p=x_1^{p+1}\,\}$, and whose remaining
$m-p$ branches are
 smooth.
\end{dfn}

\begin{eg}\label{egch2}
  For motivation, consider the following special case.  Take $p:=2$ and
  $m:=2$.  Then $\I M_{p,m}$ is the minimal Enriques diagram $\I A_2$ of
  the cuspidal curve $C:x_2^2=x_1^3$.  This diagram has three vertices
  and a unique ordering $\theta$.

  Take $F:=\AAA^2_K$ and $T:=\Spec(K)$.  In $F(\mathbf
A_2,\theta)\subset F^{(2)}$, form
  the locus $L$ of sequences $(t_0,t_1,t_2)$ of arbitrarily near
  $T$-points of $F/K$ such that $t_0$ is the constant map from $T$ to
  the origin.  Plainly, the second projection induces an isomorphism
  $L\risom E'_K$ where $E'_K$ is the exceptional divisor of the blow up
  $F'_K$ of $F$ at the origin.

The strict transform $C'$ of $C$ is tangent to $E'_K$ with order 2, and
$C'$ is given by the equation $s^2=x_1$ where $s:=x_2/x_1$.  Notice that
this equation is preserved by any first order deformation along $E'_K$
of the point of contact; indeed,
 $$(s+b\epsilon)^2=s^2$$
as  $p=2$ and $\epsilon^2=0$. This observation
suggests that the restriction of $\Psi$,
 $$(\Psi|L)\: L\to {\Hilb}^5_{F/K},$$
 is purely inseparable; and indeed,  $\Psi|L$  is so, as we check next.

 Let $\I D'$ be the diagram obtained from $\I A_2$ by omitting the root,
 let $\theta'$ be the unique ordering of $\I D'$, and consider the
 corresponding map
 $$\Psi'\:F'(\I D',\,\theta') \to \Hilb^2_{F'_K/K}.$$
Plainly, the projection $(t_0,t_1,t_2)\mapsto (t_1,t_2)$ embeds $L$ into
$F'(\I D',\,\theta')$.

So $\Psi'$ induces a map $\Psi'_L\:L\to \Hilb^2_{F'_K/K}$.  It carries
$(t_0,t_1,t_2)$ to the subscheme  of $F'_T$ with ideal $\mc I'$ defined
by the formula
$$\mc I':=\bigl(\vf^{(2)}_T\vf^{(3)}_T\bigr)_*\mc O_{F^{(3)}_T}
 \bigl(-E^{(2,3)}_T-E^{(3,3)}_T\bigr).$$
But $E^{(2,3)}_T+E^{(3,3)}_T\le E^{(1,3)}_T$.  So
$$\mc O_{F^{(3)}_T} \bigl(-E^{(2,3)}_T-E^{(3,3)}_T\bigr)
 \supseteq \mc O_{F^{(3)}_T} \bigl(-E^{(1,3)}_T\bigr).$$
 Hence $\mc I'$ contains the ideal of $E'_T$.  Therefore, $\Psi'_L$ factors
 through $\Hilb^2_{E'_K/K}$, which is isomorphic to $\Sym^2(L)$.  The
 corresponding map $L\to\Sym^2(L)$  is the diagonal map
since $\Psi'_L(t_0,t_1,t_2)$ has the same
 support as $t_1$.  This diagonal map is purely inseparable as $p=2$.

Finally, $\Psi'_L\:L\to \Hilb^2_{E'_K/K}$ is a factor of $\Psi|L$
because $\Psi(t_0,t_1,t_2)$ is the subscheme of $F_T$ with ideal
$\bigl(\vf^{(1)}_T\bigr)_*\mc I'(2E'_T)$.  Thus $\Psi|L$ is, indeed,
purely inseparable.  In fact, $\Psi$ is purely inseparable by
Proposition~\ref{prop:purins} below.
\end{eg}

\begin{prp}\label{prop:purins}
Fix $m\ge p$.  Set $\I D:=\I M_{p,m}$ and $d:=\binom{m+1}{2}+p$.  Then $\I
D$ has a unique ordering $\theta$; also ${\Aut}(\I D)=1$ and $\deg\I
D=d$.  Take $F=\AAA^2_K$.  Then $\dim F(\I D,\,\theta)=3$, and
$\Upsilon_\theta\:F(\I D,\,\theta) \to \Hilb^d_{F/Y}$ is purely
inseparable; also, $\Psi =\Upsilon_\theta$.
\end{prp}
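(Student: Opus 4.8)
The plan is to first pin down the diagram $\I M_{p,m}$ explicitly, read off all the combinatorial assertions, and then establish pure inseparability by a first-order (tangent-space) computation that generalizes Example~\ref{egch2}. Resolving the first branch $\{x_2^p=x_1^{p+1}\}$, I would show that $\I M_{p,m}$ is the directed path $P_0\to P_1\to\dotsb\to P_p$: a single root $P_0$ of weight $m$ (the multiplicity of the whole singularity is $p+(m-p)=m$); one free vertex $P_1$ of weight $1$ (after the first blow-up the $m-p$ transverse smooth branches split off into distinct free leaves of weight $1$, which are pruned); and satellites $P_2,\dotsc,P_p$ of weight $1$, each $P_i$ proximate to $P_{i-1}$ and to $P_0$, arising because the strict transform of the first branch stays tangent to the strict transform of $E^{(1)}$ with contact dropping from $p$ down to $1$. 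Hence $\deg\I D=\binom{m+1}{2}+p=d$. Since the underlying graph is a path with a distinguished root, its only ordering is $\theta(P_i)=i$ and its only automorphism is the identity; as $m\ge p\ge2>1$, the weighting adds no symmetry, so $\Aut(\I D)=1$. Because $\type(P_0)=2$, $\type(P_1)=1$, and $\type(P_i)=0$ for $i\ge2$, Formula~\ref{eq:2-1} gives $\dim(\I U)=3$, whence $\dim F(\I D,\theta)=3$ by Theorem~\ref{thm:3-2}. Finally, $\Aut(\I D)=1$ forces $F(\I D,\theta)\big/\Aut(\I D)=F(\I D,\theta)$, so $\Psi=\Upsilon_\theta$.

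Next I would reduce the inseparability claim to a ramification statement. By Theorem~\ref{thmF2H}, $\Upsilon_\theta$ is universally injective, and by Proposition~\ref{prpF2H} it factors as a finite map $\beta\:F(\I D,\theta)\to H(\I D)$ followed by an open embedding, with $\beta$ bijective. Since $F(\I D,\theta)$ is smooth and integral and $H(\I D)$ is reduced, it remains only to show that $\beta$ is not an isomorphism, that is, that $\Upsilon_\theta$ is ramified; universal injectivity then upgrades ``ramified'' to ``purely inseparable and not an embedding.'' I would establish ramification by exhibiting a nonzero tangent vector in the kernel of the differential of $\Upsilon_\theta$ at a $K$-point.

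The heart of the matter, and the step I expect to be the main obstacle, is this tangent computation. At a $K$-point I may translate so that $P_0$ is the origin and the tangent line of the first branch is $x_2=cx_1$; then the three tangent directions of $F(\I D,\theta)$ are the two moving $P_0$ in $F$ together with one moving $P_1$ along $E^{(1)}\risom\bb P^1$, i.e.\ varying $c$. The first two move the support and are seen by $\Hilb^d_{F/Y}$. For the third, I would compute the image of the tangent vector $c\mapsto c+b\epsilon$ under $\Upsilon_\theta$, as in the proof of Proposition~\ref{prpM}, using the description of the complete ideal $\mc I=\vf_*\mc L$ in Subsection~\ref{sb:ci}. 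The crucial identity, valid since $\epsilon^2=0$ and the characteristic is $p$, is
\[(x_2-(c+b\epsilon)x_1)^p=(x_2-cx_1)^p,\]
exactly as in Example~\ref{egch2}. Because $m_{P_1}=1$ enters $\mc I$ only through the $p$-th power governing the tangency of the first branch to $E^{(1)}$, this identity forces $\mc I_\epsilon=\mc I\otimes_K K[\epsilon]$; that is, deforming $c$ leaves the Hilbert point fixed to first order. Equivalently, mirroring Example~\ref{egch2}, the relevant factor of $\Upsilon_\theta$ is a diagonal into a symmetric product $\Sym^p$, which in characteristic $p$ degenerates to Frobenius. The delicate part will be carrying out this first-order analysis of $\mc I$ uniformly in $c$ and for all $m\ge p$, verifying that the extra transverse branches and the weight $m$ at the root do not interfere with this single invisible direction.

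It then follows that $b$ is nonzero in the tangent space of $F(\I D,\theta)$ yet maps to $0$ in that of $\Hilb^d_{F/Y}$, so $\Upsilon_\theta$ is ramified at every $K$-point, the argument being translation-invariant and uniform in $c$. Being in addition universally injective, with integral smooth source and reduced target, $\Upsilon_\theta$ is therefore purely inseparable and not an embedding. Combined with the combinatorics of the first paragraph, this yields all the assertions of the proposition.
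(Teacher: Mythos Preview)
Your proposal is correct and follows essentially the same approach as the paper: after pinning down the combinatorics of $\I M_{p,m}$ and invoking Theorem~\ref{thmF2H} for universal injectivity, the paper reduces to everywhere-ramification and defers this to a separate lemma (Lemma~\ref{lem:ker}) that carries out exactly the first-order computation you sketch, showing $\Ker(d_t\Upsilon_\theta)$ is the $b$-line via the identity $(s_0'-b\epsilon)^p=(s_0')^p$. The ``delicate part'' you flag is handled there by writing down explicit monomial generators $f_r=x_1^{m+1-r-\delta(r)}x_2^r$ of the complete ideal and checking their $\epsilon$-deformations directly, rather than via the $\Sym^p$ factorization of Example~\ref{egch2}.
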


\begin{proof}
  Plainly, $\I D$ has $p+1$ vertices, say $V_0,\dotsc, V_p$ ordered by
  succession.  Then proximity is given by $V_k\succ V_{k-1}$ and
  $V_k\succ V_0$ for  $k>0$.  Further, the weights are given by
  $m_{V_0}=m$ and $m_{V_k}=1$ for $k>0$.  Set $\theta(V_k):=k$;
  plainly, $\theta$ is an ordering of $\I D$, and is the only one.
  Also, plainly, ${\Aut}(\I D)=1$ and $\deg\I D=d$.

Theorem~\ref{thm:3-2} says that $\dim F(\I D,\,\theta)=\dim\I D$, but
plainly $\dim\I D=3$.  Now, $\Psi = \Upsilon_\theta$ because ${\Aut}(\I
D)=1$.  Further, Theorem~\ref{thmF2H} says that $\Psi$ is universally
injective.  Hence $\Psi$ is purely inseparable, because it is everywhere
ramified owing to the following lemma.
\end{proof}

\begin{lem}\label{lem:ker}
Under the conditions of Proposition~{\rm\ref{prop:purins}}, let $t\in
F(\I D,\,\theta)$ be a $K$-point.  Then $\Ker(d_t\Upsilon_\theta)$ is of
dimension $1$.
\end{lem}

\begin{proof}
Say $t$ represents the sequence $(t_0,\dotsc, t_p)$ of arbitrarily near
$K$-points of $F/Y$.  Choose coordinates $x_1,\ x_2$ on $F$ such that
 $t_0:x_1=x_2=0$ and such that $t_1$ is the point of intersection of
the exceptional divisor $E_0$ with the proper transform of the
$x_1$-axis.  Set $s_0:=x_2/x_1$, set $s_1:=x_1/s_0$, and set
$s_k:=s_{k-1}/s_0$ for $2\le k\le p-1$.  Then $t_1:s_0=x_1=0$, and
$t_k:s_0=s_{k-1}=0$ for $2\le k\le p$.

Set $z:=\Upsilon_\theta(t)\in \Hilb^d_{F/Y}(K)$.  Let $Z$ denote the
corresponding subscheme, and $\CI$ its ideal.  Recall from the proof of
Proposition~\ref{prpF2H} that $\CI=\varphi_{K*}\CO(-E_K)$ where
$E_K=\sum_{i=0}^p m_{V_i}E^{(i+1,\,p+1)}$.  Recall
from the proof of Proposition~\ref{prop:purins} that $m_{V_0}=m$ and
$m_{V_k}=1$ for $k>0$ and that $V_k\succ V_0$ for  $k>0$.  It follows
that
$$E_K=me^{(1,\,p+1)}_KE^{(1)}+\sum_{k=1}^{p}k(m+1)e^{(k+1,\,p+1)}_KE^{(k+1)}_K.$$

Set $\delta(r):=0$ if $0\le r< p$ and $\delta(r):=1$ if $p\le r\le m$.
Set
 $$f_r:=x_1^{m+1-r-\delta(r)}x_2^{r}\text{\quad for } 0\le r\le m.$$
Let's now show that the $f_r$ generate $\CI$.

First, note that, for each  $r$ and for  $1\le k\le p-1$,
 $$f_r
=x_1^{m+1-\delta(r)}s_0^r=s_k^{m+1-\delta(r)}s_0^{k(m+1-\delta(r))+r}.$$
  Hence, the pullback of
$f_r$ vanishes along $e^{(1,\,p+1)}_KE^{(1)}_K$ to order at least $m$, and
along $e^{(k+1,\,p+1)}_KE^{(k+1)}_K$ to order at least $k(m+1)$ for
$k\ge 1$, since $r-k\delta(r)\ge 0$.  Thus
$f_r\in \CI$ for each $r$.

Let $\CJ$ be the ideal generated by the $f_r$.  Then $\CJ\subset\CI$.
Now, $K[x_1,x_2]/\CJ$ is spanned as a $K$-vector space by the monomials
$x_1^{m+1-r-\delta(r)}x_2^{l}$ for $0\le l<r\le m$ and by
$x_1^{m+1-p}x_2^l$ for $0\le l<p$.  Hence $\CJ=\CI$ because
 $$\dim \bigl(K[x_1,x_2]/\CJ\bigr)\le\tsum_{r=0}^{m}r+p= d =\dim
K[x_1,x_2]/\CI.$$

Let $K[\epsilon]$ be the ring of dual numbers, and set
$T:=\Spec(K[\epsilon])$.  Let $(t'_0,\dotsc,t'_p)$ be a strict sequence of
arbitrarily near $T$-points of $F/Y$ lifting $(t_0,\dotsc,t_p)$. Then there
are $a_1, a_2, b\in K$ so that, after setting
 $x_1':=x_1+a_1\epsilon$ and $x_2':=x_2+a_2\epsilon$ and
setting $s_0':=x_2'/x_1'+b\epsilon$ and $s_1':=x_1'/s_0'$ and
$s_k':=s_{k-1}'/s_0'$ for $2\le k\le p-1$,
 we have  $t_0':x_1'=x_2'=0$ and $t_1':s_0'=x_1'=0$ and
$t_k':s_0'=s_{k-1}'=0$ for $2\le k\le p$.

Let $t'\in F(\I D,\,\theta)(T)$ represent $(t'_0,\dotsc,t'_p)$.  Set
$z':=\Upsilon_\theta(t')\in \Hilb^d_{F/Y}(K)(T)$.  Let $Z'$ denote the
corresponding subscheme, and $\CI'$ its ideal.  Let's show that $\CI'$
is generated by the following elements:
$$f_r':=(x_1')^{m+1-r-\delta(r)}(x_2')^{r}\text{\quad for } 0\le r\le m.$$
The $f_r'$ reduce to the $f_r$, which generate $\CI$.  Further, $\CI'$
reduces to $\CI$ as $Z'$ is flat over $K[\epsilon]$.  Hence it
suffices to prove that $\CI'$ contains the $f_r'$.

Note that $(s'_0-b\epsilon)^p=(s'_0)^p$ as the characteristic is $p$.
Hence, for each  $r$,
$$f'_r=(x'_1)^{m+1-\delta(r)}(s'_0-b\epsilon)^r
      =(s'_k)^{m+1-\delta(r)}(s'_0)^{k(m+1)
        +(p-k)\delta(r)}(s'_0-b\epsilon)^{r-p\delta(r)}$$
for $1\le k\le p-1$.  Therefore, the pullback of $f'_r$ vanishes
along $e^{(1,\,p+1)}_TE^{(1)}_T$ to order at least $m$, and along
$e^{(k+1,\,p+1)}_TE^{(k+1)}_T$ to order at least $k(m+1)$ for $k\ge
1$ since $(p-k)\delta(r)\ge 0$ and $r-p\delta(r)\ge 0$.  Thus
$\CI'$ contains the $f_r'$.

Recall that $T_z{\rm  Hilb}^d_{F/Y}(K)=\Hom(\CI,\CO_Z)$. Furthermore, it follows from
the computations above that
$$d_t\Upsilon_\theta(t')(f_r') =  (m+1-r-\delta(r))x_1^{m-r-\delta(r)}x_2^{r}a_1
 +rx_1^{m+1-r-\delta(r)}x_2^{r-1}a_2$$
for  $0\le r\le m$.   Therefore,
$$\ker(d_t\Upsilon_\theta)
=\bigl\{(a_1,a_2,b)\bigm|a_1=a_2=0\bigr\},$$
and we are done.
\end{proof}

\begin{dfn}
Fix $m\ge 3$. Let $\I N_m$ denote the minimal Enriques diagram of the
following plane curve singularity: an ordinary tacnode
$\{\,x_2(x_2-x_1^2)=0\,\}$ union with $m-2$ smooth branches whose
tangent lines are distinct and different from the common tangent line of
the two branches of the tacnode.
\end{dfn}

\begin{prp}\label{prop:IsoForAll}
Fix $m\ge 3$. Set $\I D:=\I N_m$ and $d:=\binom{m+1}{2}+3$. Then $\I D$
has a unique ordering $\theta$; also $\Aut(\I D) = 1$ and $\deg \I D =
d$. Take $F=\AAA^2_K$.  Then $\dim F(\I D, \theta) = 3$, and
$\Upsilon_\theta\colon F(\I D, \theta)\to \Hilb^d_{F/K}$ is an
embedding; also, $\Psi =\Upsilon_\theta$.
\end{prp}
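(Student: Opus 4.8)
The plan is to follow the pattern of Proposition~\ref{prpM}: first dispatch the combinatorics, then reduce the claim that $\Upsilon_\theta$ is an embedding to the claim that it is \emph{unramified}, and finally prove the latter by a tangent-space computation modeled on Lemma~\ref{lem:ker}. For the combinatorics, I would first check that $\I N_m$ has exactly two vertices: a root $V_0$ of weight $m$ at the common point $P$ of the branches (multiplicity $2+(m-2)=m$) and one free vertex $V_1$ of weight $2$, proximate to $V_0$, at the point on the first exceptional divisor where the two tacnodal branches still meet; the points where the $m-2$ transverse branches and the two separated tacnodal branches meet the exceptional divisors are weight-$1$ free leaves and are pruned. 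Since $V_0$ is the only root, any ordering sends $V_0\mapsto 0$ and $V_1\mapsto 1$, so $\theta$ is unique; every automorphism fixes $V_0$ hence $V_1$, so $\Aut(\I D)=1$ and $\Psi=\Upsilon_\theta$. Then $\deg\I D=\binom{m+1}2+\binom32=d$, and by Theorem~\ref{thm:3-2} and Formula~\ref{eq:2-1}, $\dim F(\I D,\,\theta)=\type(V_0)+\type(V_1)=2+1=3$.

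For the reduction, exactly as in the proof of Proposition~\ref{prpM}, since forming $\Upsilon_\theta$ commutes with base change I may assume $Y=\Spec K$ with $K$ algebraically closed, and by Proposition~\ref{prpF2H} the map $\Upsilon_\theta$ is universally injective and factors into a finite map followed by an open embedding. A finite map that is universally injective and unramified is a monomorphism, hence a closed embedding; so it suffices to show $\Ker(d_t\Upsilon_\theta)=0$ at every $K$-point $t$.

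Next I would pin down the ideal. Fixing $t$ representing $(t_0,t_1)$, I choose coordinates $x_1,x_2$ at $t_0$ so that $t_1$ is the direction $x_2=0$, and set $s_0:=x_2/x_1$. As in the proof of Proposition~\ref{prpF2H}, $\CI=\vf_{K*}\CO(-E_K)$ with $E_K=mE^{(1,2)}_K+2E^{(2,2)}_K$; writing the orders along the two exceptional primes as $a_1(f)=\operatorname{mult}_{t_0}(f)$ and $a_2(x_1^ix_2^j)=i+2j$, one gets $\CI=\{f:a_1(f)\ge m,\ a_2(f)\ge m+2\}$. This is the monomial ideal with generators $g_0:=x_1^{m+2}$, $g_1:=x_1^{m}x_2=x_1^{m+1}s_0$, and $g_j:=x_1^{m-j}x_2^j=x_1^{m}s_0^j$ for $2\le j\le m$; the classes $\{x_1^ix_2^j:i+j\le m-1\}\cup\{x_1^m,x_1^{m+1},x_1^{m-1}x_2\}$ form a $K$-basis of $\CO_Z:=\CO/\CI$ of size $d$, confirming the colength. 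A tangent vector at $t$ is a triple $(a_1,a_2,b)$, deforming $t_0$ by $x_i':=x_i+a_i\epsilon$ and the direction by $s_0':=x_2'/x_1'+b\epsilon$ over $\Spec K[\epsilon]$. As in Lemma~\ref{lem:ker}, the deformed ideal $\CI'$ is generated by $\tilde g_0:=(x_1')^{m+2}$, $\tilde g_1:=(x_1')^{m+1}s_0'=(x_1')^mx_2'+b\epsilon(x_1')^{m+1}$, and $\tilde g_j:=(x_1')^m(s_0')^j=(x_1')^{m-j}(x_2'+b\epsilon x_1')^j$; each reduces mod $\epsilon$ to $g_k$, each lies in $\CI'$ by the check $a_2'\big((x_1')^m(s_0')^j\big)=m+j\ge m+2$ in the $s_0'$-adapted charts, and they generate by flatness over $K[\epsilon]$. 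Then $\zeta:=d_t\Upsilon_\theta(a_1,a_2,b)$ sends each $g_k$ to the class in $\CO_Z$ of the $\epsilon$-coefficient of $\tilde g_k$.

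The computation then finishes quickly: expanding gives $\zeta(g_1)=ma_1[x_1^{m-1}x_2]+a_2[x_1^m]+b[x_1^{m+1}]$, whose three classes are distinct basis elements, so $\zeta(g_1)=0$ forces $a_2=0$ and $b=0$ in \emph{every} characteristic; and with $a_2=b=0$, the generator $g_{m-1}=x_1x_2^{m-1}$ (available precisely because $m\ge3$) gives $\zeta(g_{m-1})=a_1[x_2^{m-1}]$ with $[x_2^{m-1}]\ne0$ and unit coefficient, so $a_1=0$. Hence $\Ker(d_t\Upsilon_\theta)=0$, $\Upsilon_\theta$ is unramified, and $\Psi$ is an embedding. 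The main obstacle is this tangent-space computation, and in particular arranging that the kernel vanishes uniformly in the characteristic: the contrast with Lemma~\ref{lem:ker} is that there the sole root had weight $p$ and $b$ sat in the kernel because the key generator was a $p$th power $(s_0'-b\epsilon)^p=(s_0')^p$, whereas here the transverse branches raise the root weight to $m$, producing the generator $g_1$ that is \emph{linear} in $s_0$ (so $\Psi$ senses $b$) and the generator $g_{m-1}$ whose $a_1$-coefficient is $1$ (so $\Psi$ senses $a_1$ even when $p\mid m$). The remaining care is the vanishing-order bookkeeping over $K[\epsilon]$ needed to confirm the $\tilde g_k$ generate $\CI'$.
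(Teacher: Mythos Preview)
Your proof is correct and follows essentially the same route as the paper: the same combinatorial identification of $\I N_m$, the same reduction via Proposition~\ref{prpF2H} and Theorem~\ref{thmF2H} to showing $\Upsilon_\theta$ is unramified, and the same tangent-space computation in the coordinates $(x_1,x_2,s_0)$ with the same generators $f_r=x_1^{m-r+\delta(r)}x_2^r$ and their deformations $(x_1')^{m+\delta(r)}(s_0')^r$.

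The one genuine difference is the endgame for killing $a_1$. After reading off $a_2=b=0$ (and $ma_1=0$) from $\zeta(g_1)$, the paper pulls in the three generators $f_1',f_0',f_3'$ and argues that at least one of the coefficients $m,\ m+2,\ m-3$ is a unit in $K$, which requires a short case split on the characteristic. You instead use the single generator $g_{m-1}=x_1x_2^{m-1}$, whose $a_1$-coefficient is the exponent $1$ of $x_1'$ and hence a unit in every characteristic; this is cleaner and sidesteps the case analysis entirely. Both arguments need $m\ge 3$ (the paper to have $f_3'$ available and make the triple $\{m,m+2,m-3\}$ work; you so that $g_{m-1}$ lies in the range $j\ge 2$ and $[x_2^{m-1}]\ne 0$ in $\mc O_Z$). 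Your bookkeeping for the deformed generators $\tilde g_k\in\mc I'$ via the $(x_1',s_0')$-valuation is the same check the paper does, so no gap there either.
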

\begin{proof}
Plainly, $\I D$ has $2$ vertices, say $V_0$ and $V_1$ ordered by succession.
Then proximity is given by $V_1\succ V_0$. Further, the weights
are given by $m_{V_0}=m$ and $m_{V_1}=2$. Set $\theta(V_k):=k$; plainly, $\theta$ is an
ordering of $\I D$, and is the only one. Also, plainly, $\Aut(\I D) = 1$ and $\deg\I D = d$.
Theorem~\ref{thm:3-2} says that $\dim F(\I D, \theta) = \dim \I D$, but plainly $\dim\I D = 3$. Now, $\Psi=\Upsilon_\theta$ because $\Aut(\I D) = 1$.
Further, Theorem~\ref{thmF2H} says that $\Psi$ is universally
injective. Hence $\Psi$ is an embedding because it is nowhere ramified owing
to the following lemma.
\end{proof}

\begin{lem}
Under the conditions of Proposition~{\rm \ref{prop:IsoForAll}}, let $t\in
F(\I D,\,\theta)$ be a $K$-point.  Then $\Ker(d_t\Upsilon_\theta)=0$.
\end{lem}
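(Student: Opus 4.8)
The plan is to follow the computational strategy of Lemma~\ref{lem:ker} verbatim, but to end with a trivial kernel rather than a line. The point $t$ represents a sequence $(t_0,t_1)$. First I would choose coordinates $x_1,x_2$ on $F=\AAA^2_K$ with $t_0\:x_1=x_2=0$ and with $t_1$ the intersection of $E^{(1)}$ with the proper transform of the $x_1$-axis, and set $s_0:=x_2/x_1$ and $s_1:=s_0/x_1$. Unlike the cuspidal case, the two branches of the tacnode meet $E^{(1)}$ transversally, so the relevant chart at $t_1$ is $s_0/x_1$, not $x_1/s_0$, and there $x_2=x_1^2s_1$. Since $\Wt(V_1)=2$ and $V_1\succ V_0$, the divisor of Subsection~\ref{sb:ci} is $E_K=m\,\tilde E^{(1)}+(m+2)E^{(2)}$ in terms of the strict transform $\tilde E^{(1)}$ of $E^{(1)}$; hence $\CI=\vf_{K*}\CO(-E_K)$ consists of those $f$ whose pullback vanishes to order $\ge m$ along $\tilde E^{(1)}$ and $\ge m+2$ along $E^{(2)}$. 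On a monomial $x_1^ax_2^b$ these two orders are $a+b$ and $a+2b$, so $\CI$ is the monomial ideal cut out by $a+b\ge m$ and $a+2b\ge m+2$. One checks $\dim_K K[x_1,x_2]/\CI=\binom{m+1}{2}+3=d$, and the minimal generators are $g_0:=x_1^{m+2}$, $g_1:=x_1^{m}x_2$, and $g_r:=x_1^{m-r}x_2^r$ for $2\le r\le m$.

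Next I would introduce the deformation exactly as in Lemma~\ref{lem:ker}: a tangent vector at $t$ is recorded by $a_1,a_2,b\in K$, where $x_1':=x_1+a_1\epsilon$, $x_2':=x_2+a_2\epsilon$, $s_0':=x_2'/x_1'+b\epsilon$, $s_1':=s_0'/x_1'$, so that $x_2'=(x_1')^2s_1'-bx_1'\epsilon$. For each $g_r$ I would exhibit a lift $\tilde g_r\in\CI'$ with $\tilde g_r\equiv g_r\pmod\epsilon$. A pullback computation shows the naive lift $g_r':=g_r(x_1',x_2')$ already lies in $\CI'$ for $r=0$ and for $r\ge3$, while for $r=1$ (in every characteristic) and for $r=2$ (when $2b\ne0$) its $\epsilon$-part falls one order short along $E^{(2)}$. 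The corrected lifts, valid in all cases, are
$$\tilde g_1=(x_1')^{m}x_2'+b\epsilon(x_1')^{m+1},\qquad
 \tilde g_2=(x_1')^{m-2}(x_2')^2+2b\epsilon(x_1')^{m-1}x_2',$$
as one verifies directly by pulling back to the $(x_1',s_1')$ chart. The very failure of the naive lift for $r=1$, present in every characteristic, is the algebraic expression of the fact that $\Psi$ now senses the motion of $t_1$ along $E^{(1)}$.

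With the lifts in hand I would compute the tangent map. Writing $\tilde g_r=g_r+G_r\epsilon$ back in the coordinates $x_1,x_2$, the image of $(a_1,a_2,b)$ under $d_t\Upsilon_\theta$ is the homomorphism $\zeta\in\Hom(\CI,\CO_Z)$ sending $g_r$ to the class $-\overline{G_r}$ (the choice of lift is immaterial, since two lifts differ by an element of $\epsilon\CI$ by flatness). Because $\CI$ is a monomial ideal, distinct monomials outside $\CI$ are linearly independent in $\CO_Z$, so $\zeta(g_r)=0$ forces every coefficient of $G_r$ to vanish. From $g_1$, whose $\epsilon$-part is $ma_1x_1^{m-1}x_2+a_2x_1^{m}+bx_1^{m+1}$ with all three monomials outside $\CI$, I read off $a_2=0$, $b=0$, and $ma_1=0$; from $g_0$ I read off $(m+2)a_1=0$; subtracting gives $2a_1=0$. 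If $\operatorname{char}K\ne2$ this already yields $a_1=0$. If $\operatorname{char}K=2$ and $m$ is odd, then $ma_1=a_1=0$ directly. If $\operatorname{char}K=2$ and $m$ is even, then $m\ge4$, and $g_3$ contributes the relation $(m-3)a_1=0$ with $m-3$ odd, again forcing $a_1=0$. In every case $a_1=a_2=b=0$, so $\Ker(d_t\Upsilon_\theta)=0$.

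The main obstacle is isolating $a_1$ in characteristic $2$. There the three low generators $g_0,g_1,g_2$ carry on $a_1$ only the even coefficients $m+2$, $m$, $m-2$, all of which die when $m$ is even, so none of them detects $a_1$; the relation that saves the argument comes from $g_3$, which is available precisely because $m\ge3$. This is the algebraic shadow of the geometric principle described in the introduction to this appendix: adjoining a branch transverse to the tacnode supplies exactly the deformation direction that a pure weight-$2$ configuration is blind to in characteristic $2$, and thereby renders $\Psi$ unramified. Feeding $\Ker(d_t\Upsilon_\theta)=0$ back into Proposition~\ref{prop:IsoForAll} then completes the proof that $\Psi$ is an embedding.
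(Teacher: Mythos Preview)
Your proof is correct and follows essentially the same approach as the paper: same coordinates, same generators $g_r=x_1^{m-r+\delta(r)}x_2^r$ (with $\delta(0)=2$, $\delta(1)=1$, $\delta(r)=0$ for $r\ge2$), same corrected lifts $\tilde g_r=(x_1')^{m-r+\delta(r)}(x_2')^r+rb\epsilon(x_1')^{m-r+1+\delta(r)}(x_2')^{r-1}$, and the same three coefficients $m$, $m+2$, $m-3$ on $a_1$ extracted from $g_1$, $g_0$, $g_3$. The only cosmetic differences are that the paper keeps the correction term for all $r$ (harmless, since for $r\ge3$ it lies in $\CI$) and replaces your case split on the characteristic by the single remark that at least one of $m$, $m+2$, $m-3$ is a unit in $K$.
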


\begin{proof}
Say $t$ represents the sequence $(t_0, t_1)$ of arbitrarily near
$K$-points of $F/Y$.  Choose coordinates $x_1,\ x_2$ on $F$ such that
$t_0:x_1=x_2=0$ and such that $t_1$ is the point of intersection of
the exceptional divisor $E_0$ with the proper transform of the
$x_1$-axis.  Set $s:=x_2/x_1$. Then $t_1:s=x_1=0$.

Set $z:=\Upsilon_\theta(t)\in \Hilb^d_{F/Y}(K)$.  Let $Z$ denote the
corresponding subscheme, and $\CI$ its ideal.  Recall from the proof of
Proposition~\ref{prpF2H} that $\CI=\varphi_{K*}\CO(-E_K)$ where
$E_K=\sum_{i=0}^1 m_{V_i}E^{(i+1,\,2)}$.  Recall
from the proof of Proposition~\ref{prop:IsoForAll} that $m_{V_0}=m$ and
$m_{V_1}=2$ and that $V_1\succ V_0$.  It follows
that
$$E_K=me^{(1,\,2)}_KE^{(1)}+(m+2)E^{(2)}_K.$$

Set $\delta(0):=2$, set $\delta(1):=1$, and set $\delta(r):=0$ if $r\ge 2$. Set
 $$f_r:=x_1^{m-r+\delta(r)}x_2^{r}\text{\quad for } 0\le r\le m.$$
Let's now show that the $f_r$ generate $\CI$.

First, note that, for each  $r$,
 $$f_r=x_1^{m+\delta(r)}s^r.$$
  Hence, the pullback of
$f_r$ vanishes along $e^{(1,\,2)}_KE^{(1)}_K$ to order at least $m$, and
along $E^{(2)}_K$ to order at least $m+2$, since $m+r+\delta(r)\ge m+2$.  Thus
$f_r\in \CI$ for each $r$.

Let $\CJ$ be the ideal generated by the $f_r$.  Then $\CJ\subset\CI$.
Now, $K[x_1,x_2]/\CJ$ is spanned as a $K$-vector space by the monomials
$x_1^{m-r+\delta(r)}x_2^{l}$ for $0\le l<r\le m$ and by $x_1^{m-1}$,
$x_1^{m-1}x_2$, and $x_1^{m+1}$.  Hence $\CJ=\CI$ because
 $$\dim \bigl(K[x_1,x_2]/\CJ\bigr)\le\tsum_{r=0}^{m}r+3= d =\dim
K[x_1,x_2]/\CI.$$
Furthermore, the monomials $x_1^{m-1}$ and $x_1^{m-1}x_2$ and $x_1^{m+1}$, and
$x_1^{m-r+\delta(r)}x_2^{l}$ for $0\le l<r\le m$ form a basis of the
$K$-vector space $K[x_1,x_2]/\CI$.

Let $K[\epsilon]$ be the ring of dual numbers, and set
$T:=\Spec(K[\epsilon])$.  Let $(t'_0, t'_1)$ be a strict sequence of
arbitrarily near $T$-points of $F/Y$ lifting $(t_0, t_1)$. Then there
are $a_1, a_2,b\in K$ so that, after setting
 $x_1':=x_1+a_1\epsilon$ and $x_2':=x_2+a_2\epsilon$ and
 $s':=x_2'/x_1'+b\epsilon$, we have  $t_0':x_1'=x_2'=0$ and $t_1':s'=x_1'=0$.

Let $t'\in F(\I D,\,\theta)(T)$ represent $(t'_0, t'_1)$.  Set
$z':=\Upsilon_\theta(t')\in \Hilb^d_{F/Y}(T)$.  Let $Z'$ denote the
corresponding subscheme, and $\CI'$ its ideal.  Let's show that $\CI'$
is generated by the following elements:
$$f_r':=(x_1')^{m-r+\delta(r)}(x_2')^{r}
+rb\epsilon (x_1')^{m-r+1+\delta(r)}(x_2')^{r-1}
\text{\quad for } 0\le r\le m.$$
The $f_r'$ reduce to the $f_r$, which generate $\CI$.  Further, $\CI'$
reduces to $\CI$ as $Z'$ is flat over $K[\epsilon]$.  Hence it suffices
to prove that $\CI'$ contains the $f_r'$.

The equation $x_2'/x_1'=s'-b\epsilon$ yields
$$f'_r=(x'_1)^{m+\delta(r)}(s')^r\text{\quad for } 0\le r\le m.$$
  Hence, the pullback of $f'_r$ vanishes
along $e^{(1,\,2)}_TE^{(1)}_T$ to order at least $m$, and along
$E^{(2)}_T$ to order at least $m+2$ since $m+r+\delta(r)\ge m+2$.  Thus
$\CI'$ contains the $f_r'$.

Recall that $T_z{\rm Hilb}^d_{F/Y}(K)=\Hom(\CI,\CO_Z)$. Furthermore, it
follows from the computations above that
$$d_t\Upsilon_\theta(t')(f_r')
 = x_1^{m-r+\delta(r)-1}x_2^{r-1}\left((m-r+\delta(r))x_2a_1
   +rx_1a_2+rx_1^2b\right)$$
for $0\le r\le m$. In particular,  $rx_1^{m-2}x_2^2b\in \CI$ yields
\begin{align*}
d_t\Upsilon_\theta(t')(f_1') &=  mx_1^{m-1}x_2a_1 +x_1^ma_2+x_1^{m+1}b\\
d_t\Upsilon_\theta(t')(f_0') &= (m+2)x_1^{m+1}a_1, \text{\quad and}\\
d_t\Upsilon_\theta(t')(f_3') &=(m-3)x_1^{m-4}x_2^3a_1 +3x_1^{m-3}x_2^2a_2.
\end{align*}
 Recall that,  in $K[x_1,x_2]/\CI$, the monomials 
$$x_1^{m-1},\ x_1^{m-1}x_2,\ x_1^{m+1}, \text{ and }
 x_1^{m-r+\delta(r)}x_2^{l} \text{ for }0\le l<r\le m$$
 are
linearly independent.  But $m\ge3$, so  at least one of the
coefficients $m$, $m+2$, and $m-3$ is prime to the
characteristic. Thus, $\ker(d_t\Upsilon_\theta)=0$, and we are
done.
\end{proof}

\end{document}

Fix a smooth irreducible surface $F$ over the spectrum $Y:=\Spec(K)$ of
an algebraically closed field $K$ of positive characteristic $p$.  In this
appendix, we analyze a simple example of a minimal Enriques diagram $\I
D$ with an ordering $\theta$ for which the universally injective map of
Theorem~\ref{thmF2H},
 $$\Psi\:F(\I D,\,\theta) \big/{\Aut}(\I D)\to \Hilb^d_{F/Y},$$
is purely inseparable.  In fact, in this example, 
$\theta$ is unique, and $\Aut(\I D)$ is trivial.

It is unknown what conditions on an arbitrary Enriques diagram $\I D$
serve to guarantee here that $\Psi$ is unramified, so an embedding.
Nevertheless, in view of the analysis in this appendix, it is reasonable
to make the following guess.

\begin{gss} If $p>\max\{\,m_{V}\mid V \mbox{\rm\ is a root of }\I
  D\,\}$, then $\Psi$ is  unramified.
\end{gss}

\begin{dfn}\label{dfMpm}
Fix $m\ge p$.
 Let $\I M_{p,m}$ denote the minimal Enriques diagram of the plane curve
singularity with $1+m-p$ branches whose tangent lines are distinct,
whose first branch is $\{\,x_2^p=x_1^{p+1}\,\}$, and whose remaining
$m-p$ branches are
 smooth.
\end{dfn}

\begin{eg}\label{egch2}
  For motivation, consider the following special case.  Take $p:=2$ and
  $m:=2$.  Then $\I M_{p,m}$ is the minimal Enriques diagram $\I A_2$ of
  the cuspidal curve $C:x_2^2=x_1^3$.  This diagram has three vertices
  and a unique ordering $\theta$.

  Take $F:=\AAA^2_K$ and $T:=\Spec(K)$.  In $F(\mathbf
A_2,\theta)\subset F^{(2)}$, form
  the locus $L$ of sequences $(t_0,t_1,t_2)$ of arbitrarily near
  $T$-points of $F/K$ such that $t_0$ is the constant map from $T$ to
  the origin.  Plainly, the second projection induces an isomorphism
  $L\risom E'_K$ where $E'_K$ is the exceptional divisor of the blow up
  $F'_K$ of $F$ at the origin.

The strict transform $C'$ of $C$ is tangent to $E'_K$ with order 2, and
$C'$ is given by the equation $s^2=x_1$ where $s:=x_2/x_1$.  Notice that
this equation is preserved by any first order deformation along $E'_K$
of the point of contact; indeed,
 $$(s+b\epsilon)^2=s^2$$
as  $p=2$ and $\epsilon^2=0$. This observation
suggests that the restriction of $\Psi$, 
 $$(\Psi|L)\: L\to {\Hilb}^5_{F/K},$$
 is purely inseparable; and indeed,  $\Psi|L$  is so, as we check next.

 Let $\I D'$ be the diagram obtained from $\I A_2$ by omitting the root,
 let $\theta'$ be the unique ordering of $\I D'$, and consider the
 corresponding map
 $$\Psi'\:F'(\I D',\,\theta') \to \Hilb^2_{F'_K/K}.$$
Plainly, the projection $(t_0,t_1,t_2)\mapsto (t_1,t_2)$ embeds $L$ into
$F'(\I D',\,\theta')$.

So $\Psi'$ induces a map $\Psi'_L\:L\to \Hilb^2_{F'_K/K}$.  It carries
$(t_0,t_1,t_2)$ to the subscheme  of $F'_T$ with ideal $\mc I'$ defined
by the formula
$$\mc I':=\bigl(\vf^{(2)}_T\vf^{(3)}_T\bigr)_*\mc O_{F^{(3)}_T}
 \bigl(-E^{(2,3)}_T-E^{(3,3)}_T\bigr).$$
But $E^{(2,3)}_T+E^{(3,3)}_T\le E^{(1,3)}_T$.  So 
$$\mc O_{F^{(3)}_T} \bigl(-E^{(2,3)}_T-E^{(3,3)}_T\bigr)
 \supseteq \mc O_{F^{(3)}_T} \bigl(-E^{(1,3)}_T\bigr).$$
 Hence $\mc I'$ contains the ideal of $E'_T$.  Therefore, $\Psi'_L$ factors
 through $\Hilb^2_{E'_K/K}$, which is isomorphic to $\Sym^2(L)$.  The
 corresponding map $L\to\Sym^2(L)$  is the diagonal map
since $\Psi'_L(t_0,t_1,t_2)$ has the same
 support as $t_1$.  This diagonal map is purely inseparable as $p=2$.

Finally, $\Psi'_L\:L\to \Hilb^2_{E'_K/K}$ is a factor of $\Psi|L$
because $\Psi(t_0,t_1,t_2)$ is the subscheme of $F_T$ with ideal
$\bigl(\vf^{(1)}_T\bigr)_*\mc I'(2E'_T)$.  Thus $\Psi|L$ is, indeed,
purely inseparable.  In fact, $\Psi$ is purely inseparable by
Proposition~\ref{prop:purins} below.
\end{eg}

\begin{prp}\label{prop:purins}
Fix $m\ge p$.  Set $\I D:=M_{p,m}$ and $d:=\binom{m+1}{2}+p$.  Then $\I
D$ has a unique ordering $\theta$; also ${\Aut}(\I D)=1$ and $\deg\I
D=d$.  Take $F=\AAA^2_K$.  Then $\dim F(\I D,\,\theta)=3$, and $\Psi =
\Upsilon_\theta\:F(\I D,\,\theta) \to \Hilb^d_{F/Y}$ is purely
inseparable.
\end{prp}

\begin{proof}
  Plainly, $\I D$ has $p+1$ vertices, say $V_0,\dotsc, V_p$ ordered by
  succession.  Then proximity is given by $V_k\succ V_{k-1}$ and
  $V_k\succ V_0$ for  $k>0$.  Further, the weights are given by
  $m_{V_0}=m$ and $m_{V_k}=1$ for $k>0$.  Set $\theta(V_k):=k$;
  plainly, $\theta$ is an ordering of $\I D$, and is the only one.
  Also, plainly, ${\Aut}(\I D)=1$ and $\deg\I D=d$.

Theorem~\ref{thm:3-2} says that $\dim F(\I D,\,\theta)=\dim\I D$, but
plainly $\dim\I D=3$.  Now, $\Psi = \Upsilon_\theta$ because ${\Aut}(\I
D)=1$.  Further, Theorem~\ref{thmF2H} says that $\Psi$ is universally
injective.  Hence $\Psi$ is purely inseparable, because it is everywhere
ramified owing to the following lemma.
\end{proof}

\begin{lem}\label{lem:ker}
Under the conditions of Proposition~{\rm\ref{prop:purins}}, let $t\in
F(\I D,\,\theta)$ be a $K$-point.  Then $\Ker(d_t\Upsilon_\theta)$ is of
dimension $1$.
\end{lem}

\begin{proof}
Say $t$ represents the sequence $(t_0,\dotsc, t_p)$ of arbitrarily near
$K$-points of $F/Y$.  Choose coordinates $x_1,\ x_2$ on $F$ such that
 $t_0:x_1=x_2=0$ and such that $t_1$ is the point of intersection of
the exceptional divisor $E_0$ with the proper transform of the
$x_1$-axis.  Set $s_0:=x_2/x_1$, set $s_1:=x_1/s_0$, and set
$s_k:=s_{k-1}/s_0$ for $2\le k\le p-1$.  Then $t_1:s_0=x_1=0$, and
$t_k:s_0=s_{k-1}=0$ for $2\le k\le p$.

Set $z:=\Upsilon_\theta(t)\in \Hilb^d_{F/Y}(K)$.  Let $Z$ denote the
corresponding subscheme, and $\CI$ its ideal.  Recall from the proof of
Proposition~\ref{prpF2H} that $\CI=\varphi_{K*}\CO(-E_K)$ where
$E_K=\sum_{i=0}^p m_{V_i}E^{(i+1,\,p+1)}$.  Recall
from the proof of Proposition~\ref{prop:purins} that $m_{V_0}=m$ and
$m_{V_k}=1$ for $k>0$ and that $V_k\succ V_0$ for  $k>0$.  It follows
that
$$E_K=me^{(1,\,p+1)}_KE^{(1)}+\sum_{k=1}^{p}k(m+1)e^{(k+1,\,p+1)}_KE^{(k+1)}_K.$$

Set $\delta(r):=0$ if $0\le r< p$ and $\delta(r):=1$ if $p\le r\le m$.
Set
 $$f_r:=x_1^{m+1-r-\delta(r)}x_2^{r}\text{\quad for } 0\le r\le m.$$
Let's now show that the $f_r$ generate $\CI$.

First, note that, for each  $r$ and for  $1\le k\le p-1$, 
 $$f_r
=x_1^{m+1-\delta(r)}s_0^r=s_k^{m+1-\delta(r)}s_0^{k(m+1-\delta(r))+r}.$$
  Hence, the pullback of 
$f_r$ vanishes along $e^{(1,\,p+1)}_KE^{(1)}_K$ to order at least $m$, and
along $e^{(k+1,\,p+1)}_KE^{(k+1)}_K$ to order at least $k(m+1)$ for 
$k\ge 1$, since $r-k\delta(r)\ge 0$.  Thus
$f_r\in \CI$ for each $r$.

Let $\CJ$ be the ideal generated by the $f_r$.  Then $\CJ\subset\CI$.
Now, $K[x_1,x_2]/\CJ$ is spanned as a $K$-vector space by the monomials
$x_1^{m+1-r-\delta(r)}x_2^{l}$ for $0\le l<r\le m$ and by
$x_1^{m+1-p}x_2^l$ for $0\le l<p$.  Hence $\CJ=\CI$ because
 $$\dim \bigl(K[x_1,x_2]/\CJ\bigr)\le\tsum_{r=0}^{m}r+p= d =\dim
K[x_1,x_2]/\CI.$$

Let $K[\epsilon]$ be the ring of dual numbers, and set
$T:=\Spec(K[\epsilon])$.  Let $(t'_0,\dotsc,t'_p)$ be a strict sequence of
arbitrarily near $T$-points of $F/Y$ lifting $(t_0,\dotsc,t_p)$. Then there
are $a_1$, $a_2$, and $b$ so that, after setting
 $x_1':=x_1+a_1\epsilon$ and $x_2':=x_2+a_2\epsilon$ and
setting $s_0':=x_2'/x_1'+b\epsilon$ and $s_1':=x_1'/s_0'$ and
$s_k':=s_{k-1}'/s_0'$ for $2\le k\le p-1$,
 we have  $t_0':x_1'=x_2'=0$ and $t_1':s_0'=x_1'=0$ and
$t_k':s_0'=s_{k-1}'=0$ for $2\le k\le p$.

Let $t'\in F(\I D,\,\theta)(T)$ represent $(t'_0,\dotsc,t'_p)$.  Set
$z':=\Upsilon_\theta(t')\in \Hilb^d_{F/Y}(T)$.  Let $Z'$ denote the
corresponding subscheme, and $\CI'$ its ideal.  Let's show that $\CI'$
is generated by the following elements:
$$f_r':=(x_1')^{m+1-r-\delta(r)}(x_2')^{r}\text{\quad for } 0\le r\le m.$$
The $f_r'$ reduce to the $f_r$, which generate $\CI$.  Further, $\CI'$
reduces to $\CI$ as $Z'$ is flat over $K[\epsilon]$.  Hence it 
suffices to prove that $\CI'$ contains the $f_r'$.

Note that $(s'_0-b\epsilon)^p=(s'_0)^p$ as the characteristic is $p$.
Hence, for each  $r$, 
$$f'_r=(x'_1)^{m+1-\delta(r)}(s'_0-b\epsilon)^r
      =(s'_k)^{m+1-\delta(r)}(s'_0)^{k(m+1)
        +(p-k)\delta(r)}(s'_0-b\epsilon)^{r-p\delta(r)}$$
for $1\le k\le p-1$.  Therefore, the pullback of $f'_r$ vanishes
along $e^{(1,\,p+1)}_TE^{(1)}_T$ to order at least $m$, and along
$e^{(k+1,\,p+1)}_TE^{(k+1)}_T$ to order at least $k(m+1)$ for $k\ge
1$ since $(p-k)\delta(r)\ge 0$ and $r-p\delta(r)\ge 0$.  Thus
$\CI'$ contains the $f_r'$.

Recall that $T_z{\rm
  Hilb}^d_{F/Y}(\Spec(K))=\Hom(\CI,\CO_Z)$. Furthermore, it follows from
the computations above that
$$d_t\Upsilon_\theta(t')(f_r') =  (m+1-r-\delta(r))x_1^{m-r-\delta(r)}x_2^{r}a_1
 +rx_1^{m+1-r-\delta(r)}x_2^{r-1}a_2$$
for  $0\le r\le m$.   Therefore, 
$$\ker(d_t\Upsilon_\theta)
=\bigl\{(a_1,a_2,b)\bigm|a_1=a_2=0\bigr\},$$
and we are done.
\end{proof}